\newcommand{\bzero}{\mathbf{0}}
\newcommand{\bone}{\mathbf{1}}
\newcommand{\cA}{\mathcal{A}}
\newcommand{\C}{\mathbb{C}}
\newcommand{\cC}{\mathcal{C}}
\newcommand{\E}{\mathbb{E}}
\newcommand{\cE}{\mathcal{E}}
\newcommand{\cF}{\mathcal{F}}
\newcommand{\cG}{\mathcal{G}}
\newcommand{\cI}{\mathcal{I}}
\newcommand{\cN}{\mathcal{N}}
\newcommand{\cCN}{\mathcal{CN}}
\newcommand{\bbP}{\mathbb{P}}
\newcommand{\R}{\mathbb{R}}
\newcommand{\cS}{\mathcal{S}}
\newcommand{\whcS}{\widehat{\cS}}
\newcommand{\cT}{\mathcal{T}}
\newcommand{\teta}{\tilde{\eta}}
\newcommand{\defn}{\stackrel{def}{=}}
\newcommand{\nN}[1]{\left[\!\left[{#1}\right]\!\right]}
\DeclareMathOperator*{\argmin}{arg\,min}
\newcommand{\tH}{\mathrm{H}}
\newcommand{\tT}{\mathrm{T}}
\newcommand{\SNR}{\textsf{\textsc{snr}}}
\newcommand{\MAR}{\textsf{\textsc{mar}}}
\newcommand{\LAR}{\textsf{\textsc{lar}}}
\theoremstyle{plain}
\newtheorem{lemma}{Lemma}
\newtheorem{theorem}{Theorem}
\newtheorem{proposition}{Proposition}
\theoremstyle{definition}
\newtheorem{definition}{Definition}
\theoremstyle{remark}
\newtheorem{remark}{Remark}
\begin{document}

\title{Why Gabor Frames? Two Fundamental Measures of Coherence and Their\\
	Role in Model Selection}

\author{Waheed U. Bajwa, Robert Calderbank, and Sina Jafarpour%
%
\vspace{-0.8em}
%
\thanks{This paper was presented in part at the IEEE International Symposium on Information Theory, Austin, TX, June 2010. WUB is with the Program in Applied and Computational Mathematics, RC is with the Department of Electrical Engineering and the Program in Applied and Computational Mathematics, and SJ is with the Department of Computer Science at Princeton University, Princeton, NJ 08544 (Emails: {\tt wbajwa@math.princeton.edu, calderbk@math.princeton.edu, sina@cs.princeton.edu}).}
}


\maketitle

\begin{abstract}
The problem of model selection arises in a number of contexts, such as subset selection in linear regression, estimation of structures in graphical models, and signal denoising. This paper studies non-asymptotic model selection for the general case of arbitrary (random or deterministic) design matrices and arbitrary nonzero entries of the signal. In this regard, it generalizes the notion of \emph{incoherence} in the existing literature on model selection and introduces two fundamental measures of coherence---termed as the worst-case coherence and the average coherence---among the columns of a design matrix. It utilizes these two measures of coherence to provide an in-depth analysis of a simple, model-order agnostic one-step thresholding (OST) algorithm for model selection and proves that OST is feasible for exact as well as partial model selection as long as the design matrix obeys an easily verifiable property, which is termed as the \emph{coherence property}. One of the key insights offered by the ensuing analysis in this regard is that OST can successfully carry out model selection even when methods based on convex optimization such as the lasso fail due to the rank deficiency of the submatrices of the design matrix. In addition, the paper establishes that if the design matrix has reasonably small worst-case and average coherence then OST performs near-optimally when either (i) the energy of any nonzero entry of the signal is close to the average signal energy per nonzero entry or (ii) the signal-to-noise ratio in the measurement system is not too high. Finally, two other key contributions of the paper are that (i) it provides bounds on the average coherence of Gaussian matrices and Gabor frames, and (ii) it extends the results on model selection using OST to low-complexity, model-order agnostic recovery of sparse signals with arbitrary nonzero entries. In particular, this part of the analysis in the paper implies that an Alltop Gabor frame together with OST can successfully carry out model selection and recovery of sparse signals irrespective of the phases of the nonzero entries even if the number of nonzero entries scales almost linearly with the number of rows of the Alltop Gabor frame.
\end{abstract}

\begin{IEEEkeywords}
Basis pursuit, coherence property, compressed sensing, Gabor frames, hard thresholding, incoherence condition, irrepresentable condition, lasso, marginal regression, matching pursuit, model selection, sparse signals, sparsity pattern recovery, statistical orthogonality condition, variable selection
\end{IEEEkeywords}

\section{Introduction}
\subsection{Background}
In many information processing and statistics problems involving high-dimensional data, the ``curse of dimensionality'' can often be broken by exploiting the fact that real-world data tend to live in low-dimensional manifolds. This phenomenon is exemplified by the important special case in which a data vector $\beta \in \C^p$ satisfies $\|\beta\|_0 \doteq \sum_{i=1}^p 1_{\{|\beta_i| > 0\}} \leq k \ll p$ and is observed according to the linear measurement model $y = X \beta + \eta$. Here, $X$ is an $n \times p$ (real- or complex-valued) matrix called the \emph{measurement} or \emph{design matrix}, while $\eta \in \C^n$ represents noise in the measurement system. In this problem, the assumption that the data vector $\beta$ is ``$k$-sparse'' allows one to operate in the so-called ``compressed'' setting, $k < n \ll p$, thereby enabling tasks that might be deemed prohibitive otherwise because of either technological or computational constraints.

Fundamentally, given a measurement vector $y = X\beta + \eta$ in the compressed setting, there are three complementary---but nonetheless distinct---questions that one needs to answer:
\begin{itemize}
 \item[] \hspace{-1em}\textbf{[Estimation]} Under what conditions can we obtain a reliable estimate of a $k$-sparse $\beta$ from $y$?
 \item[] \hspace{-1em}\textbf{[Regression]} Under what conditions can we reliably approximate $X\beta$ corresponding to a $k$-sparse $\beta$ from $y$?
 \item[] \hspace{-1em}\textbf{[Model Selection]} Under what conditions can we reliably recover the locations of the nonzero entries of a $k$-sparse $\beta$ (in other words, the model $\cS \doteq \{i \in \{1,\dots,p\}:|\beta_i| > 0\}$) from $y$?
\end{itemize}
A number of researchers have attempted to address the estimation and the regression question over the past several years. In many application areas, however, the model-selection question is equally---if not more---important than the other two questions. In particular, the problem of model selection (sometimes also known as \emph{variable selection} or \emph{sparsity pattern recovery}) arises indirectly in a number of contexts, such as subset selection in linear regression \cite{miller:90}, estimation of structures in graphical models \cite{meinshausen:annstat06}, and signal denoising \cite{donoho:siamjsc98}. In addition, solving the model-selection problem in some (but not all) cases also enables one to solve the estimation and/or the regression problem.

\subsection{Main Contributions}
\textbf{Model Selection:} One of the primary objectives of this paper is to study the problem of \emph{polynomial time, model-order agnostic model selection in a compressed setting for the general case of arbitrary (random or deterministic) design matrices and arbitrary nonzero entries of the signal}. In order to accomplish this task, we introduce in the paper two fundamental measures of coherence among the (normalized) columns $\{\mathrm{x}_i \in \C^n\}$ of the $n \times p$ design matrix $X$, namely,\footnote{Here, and throughout the rest of this paper, we assume without loss of generality that $X$ has unit $\ell_2$-norm columns. This is because deviations to this assumption can always be accounted for by appropriately scaling the entries of the data vector $\beta$ instead.}
\begin{itemize}
\item \emph{Worst-Case Coherence}: $\mu(X) \doteq \max\limits_{i,j:i \neq j} \big|\langle\mathrm{x}_i, \mathrm{x}_j\rangle\big|$, and 
%
\vspace{1em}
\item \emph{Average Coherence}: $\nu(X) \doteq \frac{1}{p-1} \max\limits_{i} \bigg|\!\sum\limits_{j:j \neq i} \langle\mathrm{x}_i, \mathrm{x}_j\rangle\bigg|.$
\end{itemize}
%
\vspace{0.5em}
Roughly speaking, worst-case coherence---which seems to have been introduced in the related literature in \cite{davis:ca97}---is a similarity measure between the columns of a design matrix: the smaller the worst-case coherence, the less similar the columns. On the other hand, average coherence---which was first introduced in a conference version of this paper \cite{bajwa:isit10}---is a measure of the spread of the columns of a design matrix within the $n$-dimensional unit ball: the smaller the average coherence, the more spread out the column vectors.

\begin{algorithm*}[t]
\caption{The One-Step Thresholding (OST) Algorithm for Model Selection}
\label{alg:OST}
\textbf{Input:} An $n \times p$ matrix $X$, a vector $y \in \C^n$, and a threshold $\lambda > 0$\\
\textbf{Output:} An estimate $\whcS \subset \{1,\dots,p\}$ of the true model $\cS$
\begin{algorithmic}
\STATE $f \leftarrow X^\tH y$ \hfill \COMMENT{Form signal proxy}
\STATE $\whcS \leftarrow \left\{i \in \{1,\dots,p\} : |f_i| > \lambda\right\}$ \hfill \COMMENT{Select model via OST}
\end{algorithmic}
\end{algorithm*}
%
Our main contribution in the area of model selection is that we make use of these two measures of coherence to propose and analyze a model-order agnostic threshold for the \emph{one-step thresholding} (OST) algorithm (see Algorithm~\ref{alg:OST}) for model selection. Specifically, we characterize in Section~\ref{sec:mod_sel} both the exact and the partial model-selection performance of OST in a \emph{non-asymptotic} setting in terms of $\mu$ and $\nu$. In particular, we establish in Section~\ref{sec:mod_sel} that if $\mu(X) \asymp n^{-1/2}$ and $\nu(X) \precsim n^{-1}$ then OST---despite being computationally primitive---can perform near-optimally for the case when either (i) the energy of any nonzero entry of $\beta$ is not too far away from the average signal energy per nonzero entry $\|\beta\|_2^2/k$ or (ii) the signal-to-noise ratio (\SNR) in the measurement system is not too high.\footnote{Recall ``\emph{Big--O}'' notation: $f(n) = O(g(n))$ (alternatively, $f(n) \precsim g(n)$) if $\exists~c_o > 0, n_o : \forall~n \geq n_o, f(n) \leq c_o g(n)$, $f(n) = \Omega(g(n))$ (alternatively, $f(n) \succsim g(n)$) if $g(n) = O(f(n))$, and $f(n) =  \Theta(g(n))$ (alternatively, $f(n) \asymp g(n)$) if $g(n) \precsim f(n) \precsim g(n)$.} Equally importantly, in contrast to some of the existing literature on model selection, this analysis in the paper holds for arbitrary values of the nonzero entries of $\beta$ and it does not require the $n \times k$ submatrices of the design matrix $X$ to have full column rank.

\textbf{Sparse-Signal Recovery:} The second main objective of this paper is to study the problem of \emph{low-complexity, model-order agnostic recovery of $k$-sparse signals with arbitrary nonzero entries in the noiseless case}. In this regard, our main contribution in the area of sparse-signal recovery is that we make use of a recent result by Tropp \cite{tropp:cras08} in Section~\ref{sec:sparse_rec} to extend our results on model selection to recovery of $k$-sparse signals using OST (see Algorithm~\ref{alg:OST_recon}). In particular, we establish in Section~\ref{sec:gabor} that Gabor frames---which are collections of time- and frequency-shifts of a nonzero seed vector (sequence) in $\C^n$---can potentially be used together with OST to exactly recover \emph{most} $k$-sparse signals with arbitrary nonzero entries as long as $k \precsim \mu^{-2}/\log{n}$ and the energy of any nonzero entry of $\beta$ is not too far away from $\|\beta\|_2^2/k$. This result then applies immediately to Gabor frames generated from the Alltop sequence \cite{alltop:tit80}. Specifically, since Gabor frames generated from the Alltop sequence have worst-case coherence $\mu = \frac{1}{\sqrt{n}}$ for any prime $n \geq 5$ \cite{strohmer:acha03}, this result implies that an Alltop Gabor frame together with OST successfully recovers most $k$-sparse signals \emph{irrespective} of the values of the nonzero entries of $\beta$ as long as $k \precsim n/\log{n}$ and and the energy of any nonzero entry of $\beta$ is not too far away from $\|\beta\|_2^2/k$.
%
\begin{algorithm*}[t]
\caption{The One-Step Thresholding (OST) Algorithm for Sparse-Signal Recovery}
\label{alg:OST_recon}
\textbf{Input:} An $n \times p$ matrix $X$, a vector $y \in \C^n$, and a threshold $\lambda > 0$\\
\textbf{Output:} An estimate $\widehat{\beta} \in \C^p$ of the true sparse signal $\beta$
\begin{algorithmic}
\STATE $\widehat{\beta} \leftarrow \bzero$ \hfill \COMMENT{Initialize}
\STATE $f \leftarrow X^\tH y$ \hfill \COMMENT{Form signal proxy}
\STATE $\cI \leftarrow \left\{i \in \{1,\dots,p\} : |f_i| > \lambda\right\}$ \hfill \COMMENT{Select indices via OST}
\STATE $\widehat{\beta}_{\cI} \leftarrow (X_\cI)^\dagger y$ \hfill \COMMENT{Recover signal via least-squares}
\end{algorithmic}
\end{algorithm*}

\subsection{Relationship to Previous Work}
The problems of model selection and sparse-signal recovery in general and the use of OST (also known as \emph{simple thresholding} \cite{donoho:cpam06} and \emph{marginal regression} \cite{genovese:sub09}) to solve these problems in particular have a rich history in the literature. In the context of model selection in the compressed setting, Mallow's $C_p$ selection procedure \cite{mallows:techno73} and the \emph{Akaike information criterion} (AIC) \cite{akaike:tac74}---both of which essentially attempt to solve a complexity-regularized version of the least-squares criterion---are considered to be seminal works, and are known to perform well empirically as well as theoretically; see, e.g., \cite{massart:ecm05} and the references therein. These two procedures have been modified by numerous researchers over the years in order to improve their performance---the most notable variants being the \emph{Bayesian information criterion} (BIC) \cite{schwarz:annstat78} and the \emph{risk inflation criterion} (RIC) \cite{foster:annstat94}. Solving model-selection procedures such as $C_p$, AIC, BIC, and RIC, however, is known to be an NP-hard problem \cite{natarajan:siamjc95} even if the true model order $k$ is made available to these procedures.

In order to overcome the computational intractability of these model-selection procedures, several methods based on convex optimization have been proposed by various researchers in recent years. Among these proposed methods, the lasso \cite{tibshirani:jrss96} has arguably become the standard tool for model selection, which can be partly attributed to the theoretical guarantees provided for the lasso in \cite{meinshausen:annstat06,zhao:jmlr06,wainwright:tit09,candes:annstat09}. In particular, the results reported in \cite{meinshausen:annstat06,zhao:jmlr06} establish that the lasso asymptotically identifies the correct model under certain conditions on the design matrix $X$ and the sparse vector $\beta$. Later, Wainwright in \cite{wainwright:tit09} strengthens the results of \cite{meinshausen:annstat06,zhao:jmlr06} and makes explicit the dependence of exact model selection using the lasso on the smallest (in magnitude) nonzero entry of $\beta$. However, apart from the fact that the results reported in \cite{meinshausen:annstat06,zhao:jmlr06,wainwright:tit09} are for exact model selection and are only asymptotic in nature, the main limitation of these works is that explicit verification of the conditions (such as the \emph{irrepresentable condition} of \cite{zhao:jmlr06} and the \emph{incoherence condition} of \cite{wainwright:tit09}) that a generic design matrix $X$ needs to satisfy is computationally intractable for $k \succsim \mu^{-1}$. The most general (and non-asymptotic) model-selection results using the lasso for arbitrary design matrices have been reported in \cite{candes:annstat09}. Specifically, Cand\`{e}s and Plan have established in \cite{candes:annstat09} that the lasso correctly identifies most models with probability $1 - O(p^{-1})$ under certain conditions on the smallest nonzero entry of $\beta$ provided: (i) the spectral norm (the largest singular value) and the worst-case coherence of $X$ are not too large, and (ii) the values of the nonzero entries of $\beta$ are independent and statistically symmetric around zero. Despite these recent theoretical triumphs of the lasso, it is still desirable to study alternative solutions to the problem of polynomial time, model-order agnostic model selection in a compressed setting. This is because:\footnote{During the course of revising this paper we also became aware of \cite{saligrama:10sub}, which proposes a thresholded variant of \emph{basis pursuit} \cite{donoho:siamjsc98} for sparsity pattern recovery using Gaussian design matrices. However, the results reported in \cite{saligrama:10sub} are limited because of similar issues and because of the requirement that the magnitude of the smallest nonzero entry of $\beta$ be known to the algorithm.}
\begin{enumerate}
\item Lasso requires the minimum singular values of the submatrices of $X$ corresponding to the true models to be bounded away from zero \cite{meinshausen:annstat06,zhao:jmlr06,wainwright:tit09,candes:annstat09}. While this is a plausible condition for the case when one is interested in estimating $\beta$, it is arguable whether this condition is necessary for the case of model selection.
\item The current literature on model selection using the lasso lacks guarantees beyond $k \succsim \mu^{-1}$ for the case of generic design matrices and arbitrary nonzero entries. In particular, given an arbitrary design matrix $X$, \cite{meinshausen:annstat06,zhao:jmlr06,wainwright:tit09,candes:annstat09} do not provide any guarantees beyond $k \succsim \sqrt{n}$ for even the simple case of $\beta \in \R^p_+$.
\item The computational complexity of the lasso for generic design matrices tends to be $O(p^3 + np^2)$ \cite{genovese:sub09}. This makes the lasso computationally demanding for large-scale model-selection problems.
\end{enumerate}

Recently, a few researchers have raised somewhat similar concerns about the lasso and revisited the much older (and oft-forgotten) method of thresholding for model selection \cite{schnass:spl07,fletcher:tit09,reeves:asilomar09,genovese:sub09}, which has computational complexity of $O(np)$ only and which is known to be nearly optimal for $p \times p$ orthonormal design matrices \cite{donoho:biomet94}. Algorithmically, this makes our approach to model selection similar to that of \cite{schnass:spl07,fletcher:tit09,reeves:asilomar09,genovese:sub09}. Nevertheless, the OST algorithm presented in this paper differs from \cite{schnass:spl07,fletcher:tit09,reeves:asilomar09,genovese:sub09} in five key aspects:
\begin{enumerate}
\item \emph{Model-Order Agnostic Model Selection:} Unlike \cite{schnass:spl07,fletcher:tit09,reeves:asilomar09,genovese:sub09}, the OST algorithm presented in this paper is completely agnostic to both the true model order $k$ and any estimate of $k$.
\item \emph{Generic Design Matrices and Arbitrary Nonzero Entries:} The results reported in this paper hold for arbitrary (random or deterministic) design matrices and do not assume any statistical prior on the values of the nonzero entries of $\beta$ even when $k$ scales linearly with $n$. In contrast, \cite{fletcher:tit09} only studies the problem of Gaussian design matrices whereas the most influential results reported in \cite{schnass:spl07,reeves:asilomar09,genovese:sub09} assume that the values of the nonzero entries of $\beta$ are independent and statistically symmetric around zero.
\item \emph{Verifiable Sufficient Conditions:} In contrast to \cite{schnass:spl07, fletcher:tit09, reeves:asilomar09, genovese:sub09}, we relate the model-selection performance of OST to two global parameters of $X$, namely, $\mu$ and $\nu$, which are trivially computable in polynomial time: $\mu(X) = \|X^\tH X - I\|_{\max}$ and $\nu(X) = \frac{1}{p-1}\|(X^\tH X - I)\bone\|_\infty$.
\item \emph{Non-Asymptotic Theory:} Similar to \cite{fletcher:tit09,reeves:asilomar09,genovese:sub09}, the analysis in this paper can be used to establish that OST achieves (asymptotically) consistent model selection under certain conditions. However, the results reported in this paper are completely non-asymptotic in nature (with explicit constants) and thereby shed light on the rate at which OST achieves consistent model selection.
\item \emph{Partial Model Selection:} In addition to the exact model-selection performance of OST, we also characterize in the paper its partial model-selection performance. In this regard, we establish that the \emph{universal threshold} proposed in Section~\ref{sec:mod_sel} for OST guarantees $\whcS \subset \cS$ with high probability and we quantify the cardinality of the estimate $\whcS$. On the other hand, both \cite{schnass:spl07} and \cite{fletcher:tit09} study only exact model selection, whereas \cite{genovese:sub09,reeves:asilomar09} study approximate (though not partial) model selection only for Gaussian design matrices \cite{genovese:sub09} and assuming Gaussian (resp. statistical) priors on the nonzero entries of $\beta$ \cite{reeves:asilomar09} (resp. \cite{genovese:sub09}).
\end{enumerate}

We conclude this discussion of model selection by making three important remarks. First, to the best of our knowledge, Donoho in \cite[Theorem~7.2]{donoho:cpam06} reported some of the earliest known results for thresholding in the compressed setting. Nevertheless, the conclusion drawn in \cite{donoho:cpam06} was that thresholding is feasible for model selection as long as $k \precsim \mu^{-1}$, the so-called ``square root bottleneck.'' Second, the structure of OST and the model-order agnostic threshold of this paper enable us to carry out \emph{localized model selection}. Specifically, if one is provided at the time of recovery with a set $\cT$ such that $\cT \supset \cS$ then the threshold proposed in this paper enables one to carry out model selection using the submatrix $X_\cT$ instead of $X$, thereby reducing the complexity of OST from $O(np)$ to $O(n|\cT|)$. Third, the results reported in this paper hold for any $n \leq p$ and, in particular, the universal threshold proposed here for model selection reduces to the universal threshold proposed by Donoho and Johnstone \cite{donoho:biomet94} for $p \times p$ orthonormal design matrices. In this sense, some of the results reported in \cite{donoho:biomet94} can also be thought of as special instances of the results reported in this paper.

Finally, in the context of sparse-signal recovery in the compressed setting, there exists now a large body of literature that studies this problem under the rubric of \emph{compressed sensing} \cite{spm:cs08}. However, convex optimization procedures such as basis pursuit (BP) \cite{donoho:siamjsc98}, Dantzig selector \cite{candes:annstat07}, and lasso---although known for their ability to recover sparse signals under a variety of conditions---are ill-suited for large-scale problems because of their computational complexity. On the other hand, low-complexity iterative algorithms such as matching pursuit \cite{mallat:tsp93}, subspace pursuit \cite{dai:tit09}, CoSaMP \cite{needell:acha09}, and iterative hard thresholding \cite{blumensath:acha09}, and combinatorial algorithms based on group testing such as HHS pursuit \cite{gilbert:stoc07} and Fourier samplers \cite{gilbert:stoc02,iwen:soda08} have been shown to perform well either only for some special classes of design matrices \cite{gilbert:stoc02,gilbert:stoc07,iwen:soda08} or for design matrices that satisfy the \emph{restricted isometry property} (RIP) \cite{candes:cras08}. Nevertheless, explicitly verifying that $X$ satisfies the RIP of order $k \succsim \mu^{-1}$ is computationally intractable; in particular, since we have from the Welch bound \cite{welch:tit74} that $\mu^{-1} \precsim \sqrt{n}$ for $p \gg 1$, the guarantees provided in \cite{dai:tit09, needell:acha09, blumensath:acha09} for the case of generic design matrices at best hold only for $k$-sparse signals with $k \precsim \sqrt{n}$.

In contrast, and motivated by the need to have verifiable sufficient conditions for low-complexity algorithms and arbitrary values of the nonzero entries of $\beta$ even when $k \succsim \sqrt{n}$, we extend in Section~\ref{sec:sparse_rec} our results on model selection using OST and characterize the performance of Algorithm~\ref{alg:OST_recon} in terms of three global parameters of the design matrix $X$: $\mu(X)$, $\nu(X)$, and $\|X\|_2$. In particular, a highlight of this part of the paper is that we partially strengthen the results of Pfander et al. \cite{pfander:tsp08} and Herman and Strohmer \cite{herman:tsp09} by establishing that Gabor frames generated from the Alltop sequence can be used along with OST to recover most $k$-sparse signals belonging to certain classes even when $k \succsim \sqrt{n}$. It is worth pointing out here that both \cite{pfander:tsp08,herman:tsp09} also establish that Alltop Gabor frames can recover most $k$-sparse signals---albeit using BP---even when $k \succsim \sqrt{n}$. Nevertheless, the basic difference between \cite{pfander:tsp08,herman:tsp09} and the work presented here is that \cite{pfander:tsp08,herman:tsp09} require the phases of the nonzero entries of $\beta$ to be statistically independent and uniformly distributed on the unit torus whereas we do not assume any statistical prior on the values of the nonzero entries of $\beta$. Note in particular that, just like the lasso result in \cite{candes:annstat09}, the results reported in \cite{pfander:tsp08,herman:tsp09} for Alltop Gabor frames consequently do not provide any guarantees beyond $k \succsim \sqrt{n}$ for even the simple case of $\beta \in \R^p_+$. This difference between the BP-based recovery guarantees presented in \cite{pfander:tsp08,herman:tsp09} (which are essentially based on \cite{tropp:acha08}) and the OST-based recovery guarantees provided in this paper is also illustrated using a Venn diagram in Fig.~\ref{fig:OST_vs_BP} for
\emph{unimodal signals} (defined as: $|\beta_i| \approx c$ for some arbitrary $c > 0$ and for all $i \in \cS$).
\begin{figure*}[t]
\centering%
\includegraphics[scale=0.4]{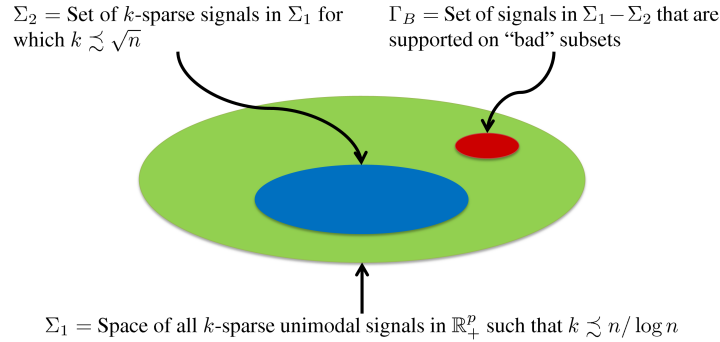}
\caption{A Venn digram used to illustrate the major difference between the BP-based recovery guarantees and the OST-based recovery guarantees for $k$-sparse unimodal signals in $\R^p_+$ measured using Alltop Gabor frames. The OST algorithm is guaranteed to recover $\beta \in \Sigma_1 - \Gamma_{B}$. But BP, unlike OST, is only guaranteed to recover $\beta \in \Sigma_2$ in this case.}%
\label{fig:OST_vs_BP}%
\end{figure*}

\subsection{Notation}
The following notation is used throughout the rest of this paper. We use lowercase letters to denote scalars and vectors, while we use uppercase letters to denote matrices. We also use $\bzero$, $\bone$, and $I$ to denote the all-zeros vector, the all-ones vector, and the identity matrix, respectively. In addition, we use $\|v\|_p$ to denote the usual $\ell_p$-norm of a vector $v$, while we use $A^\dag$, $\|A\|_2$, and $\|A\|_{\max}$ to denote the Moore--Penrose pseudoinverse, the spectral norm, and the maximum magnitude of any entry of a matrix $A$, respectively. Further, we use $(\cdot)^\tT$ and $(\cdot)^\tH$ to denote the operations of transposition and conjugate transposition, respectively, while we use $\langle\cdot,\cdot\rangle$ to denote inner product that is conjugate linear in the first argument. Finally, given a set $\cI$, we use $v_{\cI}$ to denote the part of a vector $v$ corresponding to the indices in $\cI$ and $A_\cI$ to denote the submatrix obtained by collecting the $|\cI|$ columns of a matrix $A$ corresponding to the indices in $\cI$.

\subsection{Organization}
The rest of this paper is organized as follows. In Section~\ref{sec:mod_sel}, we propose a model-order agnostic threshold for the OST algorithm and characterize both the exact and the partial model-selection performance of OST. In Section~\ref{sec:sparse_rec}, we extend our results on model selection and characterize the sparse-signal recovery performance of OST. In Section~\ref{sec:gabor}, we specialize the model-selection and the sparse-signal recovery results of the previous sections to Gabor frames. Finally, we provide proofs of the main results of this paper in Section~\ref{sec:proofs} and conclude with a discussion of the limitations and extensions of our results in Section~\ref{sec:disc}.

\section{Model Selection Using One-Step Thresholding}\label{sec:mod_sel}
\subsection{Assumptions}
Before proceeding with presenting our results on model selection using OST, we need to be mathematically precise about our problem formulation. To this end, we begin by reconsidering the measurement model $y = X \beta + \eta$ and assume that $X$ is an $n \times p$ real- or complex-valued design matrix having unit $\ell_2$-norm columns, $\beta \in \C^p$ is a $k$-sparse signal $(\|\beta\|_0 \leq k)$, and $k < n \leq p$. Here, we allow $X$ to be either a random or a deterministic design matrix, while we take $\eta$ to be a complex additive white Gaussian noise vector. It is worth mentioning here though that Gaussianity of $\eta$ is just a simplified assumption for the sake of this exposition; in particular, the results presented in this section are readily generalizable to other noise distributions as well as perturbations having bounded $\ell_2$-norms. Finally, the main assumption that we make here is that the true model $\cS \doteq \{i \in \{1,\dots,p\}:|\beta_i| > 0\}$ is a uniformly random $k$-subset of $\{1,\dots,p\}$. In other words, we have a uniform prior on the \emph{support} of the data vector $\beta$.

\subsection{Main Results}\label{ssec:ms_results}
Intuitively speaking, successful model selection requires the columns of the design matrix to be \emph{incoherent}. In the case of the lasso, this notion of incoherence has been quantified in \cite{zhao:jmlr06} and \cite{wainwright:tit09} in terms of the ``irrepresentable condition'' and the ``incoherence condition,'' respectively (see also \cite{candes:annstat09}). In contrast to earlier work on model selection, however, we formulate this idea of incoherence in terms of the \emph{coherence property}.
\begin{definition}[The Coherence Property]
An $n \times p$ design matrix $X$ having unit $\ell_2$-norm columns is said to obey the coherence property if the following two conditions hold:
\begin{align}
\label{eqn:cp1}
\tag{CP-1} \mu(X) &\leq \frac{0.1}{\sqrt{2 \log{p}}}, \quad \text{and}\\
\label{eqn:cp2}
\tag{CP-2} \nu(X) &\leq \frac{\mu}{\sqrt{n}}\,.
\end{align}
\end{definition}
\noindent In words, \eqref{eqn:cp1} roughly states that the columns of $X$ are not too similar, while \eqref{eqn:cp2} roughly states that the columns of $X$ are somewhat distributed within the $n$-dimensional unit ball. Note that the coherence property is superior to other measures of incoherence such as the irrepresentable condition in two key aspects. First, it does not require the singular values of the submatrices of $X$ to be bounded away from zero. Second, it can be easily verified in polynomial time since it simply requires checking that $\|X^\tH X - I\|_{\max} \leq (200 \log{p})^{-1/2}$  and $\|(X^\tH X - I)\bone\|_\infty \leq (p-1)n^{-1/2}\|X^\tH X - I\|_{\max}$.

Below, we describe the implications of the coherence property for both the exact and the partial model-selection performance of OST. Before proceeding further, however, it is instructive to first define some fundamental quantities pertaining to the problem of model selection as follows:
\begin{align*}
 \beta_{\min} &\doteq \min_{i\in\cS}|\beta_i|\,, \qquad & \MAR &\doteq \frac{\beta_{\min}^2}{\|\beta\|_2^2/k}\,,\\
 \SNR_{\min} &\doteq \frac{\beta_{\min}^2}{\E[\|\eta\|_2^2]/k}\,, \qquad & \SNR &\doteq \frac{\|\beta\|_2^2}{\E[\|\eta\|_2^2]}\,.
\end{align*}
In words, $\beta_{\min}$ is the magnitude of the smallest nonzero entry of $\beta$, while $\MAR$---which is termed as \emph{minimum-to-average ratio} \cite{fletcher:tit09}---is the ratio of the \emph{energy in the smallest nonzero entry} of $\beta$ and the \emph{average signal energy per nonzero entry} of $\beta$. Likewise, $\SNR_{\min}$ is the ratio of the energy in the smallest nonzero entry of $\beta$  and the average \emph{noise} energy per nonzero entry, while $\SNR$ simply denotes the usual signal-to-noise ratio in the system. It is also worth pointing out here the relationship between $\SNR_{\min}$ and $\SNR$ and $\MAR$; specifically, it is easy to see that $\SNR_{\min} = \SNR\cdot\MAR$. We are now ready to state the first main result of this paper that concerns the performance of OST in terms of exact model selection.
\begin{theorem}[Exact Model Selection Using OST]\label{thm:ems_cp}
Suppose that the design matrix $X$ satisfies the coherence property and let $\eta$ be distributed as $\cCN(\bzero,\sigma^2 I)$. Next, choose the threshold $\lambda = \max\Big\{\frac{1}{t}10 \mu \sqrt{n \cdot \SNR}, \frac{1}{1-t} \sqrt{2}\Big\}\sqrt{2\sigma^2\log{p}}$ for any $t \in (0,1)$. Then, if we write $\mu(X)$ as $\mu = c_1 n^{-1/\gamma}$ for some $c_1 > 0$ (which may depend on $p$) and $\gamma \in \{0\}\cup [2,\infty)$, the OST algorithm (Algorithm~\ref{alg:OST}) satisfies $\Pr(\whcS \not= \cS) \leq 6p^{-1}$ provided $p \geq 128$ and the number of measurements satisfies
\begin{align}
\nonumber
 n &> \max \Bigg\{2k\log{p},\frac{8(1-t)^{-2}}{\SNR_{\min}}2k\log{p},\bigg(\frac{c_2t^{-2}}{\MAR}2k\log{p}\bigg)^{\gamma/2}\Bigg\}\\
\label{eqnthm:ems_1}
 &\equiv \max \Bigg\{2k\log{p},\frac{8(1-t)^{-2}}{\SNR\cdot\MAR}2k\log{p},\bigg(\frac{c_2t^{-2}}{\MAR}2k\log{p}\bigg)^{\gamma/2}\Bigg\}.
\end{align}
Here, the quantity $c_2 > 0$ is defined as $c_2 \doteq (20\,c_1)^2$, while the probability of failure is with respect to the true model $\cS$ and the complex Gaussian noise vector $\eta$.
\end{theorem}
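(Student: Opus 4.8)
The plan is to analyze the signal proxy $f = X^{\tH}y$ coordinatewise. Since $X$ has unit-norm columns and $\beta_j \neq 0$ only for $j\in\cS$,
\[ f_i = \langle x_i, y\rangle = \beta_i + \sum_{j\in\cS,\,j\neq i}\beta_j\langle x_i,x_j\rangle + \langle x_i,\eta\rangle =: \beta_i + z_i + w_i, \]
where $z_i$ is a self-interference (cross-talk) term built from the inner products of column $i$ with the other active columns and $w_i$ is a noise term. Exact model selection amounts to $\min_{i\in\cS}|f_i| > \lambda$ together with $\max_{i\notin\cS}|f_i|\leq\lambda$, and since $|f_i|\geq\beta_{\min}-|z_i|-|w_i|$ on $\cS$ and $|f_i|\leq|z_i|+|w_i|$ off $\cS$, it suffices to obtain high-probability bounds $\max_i|w_i|\leq W$ and $\max_i|z_i|\leq Z$ and then check the two inequalities $Z+W\leq\lambda$ (off-support) and $\beta_{\min} > \lambda + Z + W$ (on-support). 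The parameter $t$ merely apportions $\lambda$ between the interference budget and the noise budget: once one substitutes $\|\beta\|_2^2 = \SNR\cdot\E\|\eta\|_2^2 = \SNR\cdot n\sigma^2$, the two arguments of the $\max$ defining $\lambda$ become $t^{-1}Z$ and $(1-t)^{-1}\sqrt2\,W$, and an elementary case analysis shows $\max\{t^{-1}Z,(1-t)^{-1}\sqrt2\,W\}\geq Z+W$, which already disposes of the off-support coordinates.

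The noise term is the easy ingredient: because $\eta\sim\cCN(\bzero,\sigma^2 I)$ and $\|x_i\|_2 = 1$, each $w_i\sim\cCN(0,\sigma^2)$, so $\Pr(|w_i| > \tau) = e^{-\tau^2/\sigma^2}$; with $W := \sqrt{2\sigma^2\log p}$ a union bound over the coordinates gives $\max_i|w_i|\leq W$ with probability at least $1-p^{-1}$. The second constraint in \eqref{eqnthm:ems_1}, $n > 8(1-t)^{-2}\SNR_{\min}^{-1}\cdot 2k\log p$, is then just the required headroom $\beta_{\min} > \tfrac{2\sqrt2}{1-t}W$ rewritten via $\SNR_{\min} = k\beta_{\min}^2/(n\sigma^2)$.

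The crux is the interference bound, where both \eqref{eqn:cp1} and \eqref{eqn:cp2} enter and where the randomness of the support $\cS$ is indispensable --- the deterministic estimate $|z_i|\leq\mu\sum_{j\in\cS}|\beta_j|\leq\mu\sqrt k\,\|\beta\|_2$ gives only the square-root bottleneck. Fixing $i$ and treating $\cS$ as a uniformly random $k$-subset, I would split $z_i$ into its conditional mean and a fluctuation. The mean is, up to the sampling fraction $k/p$, a weighting of $\sum_{j\neq i}\langle x_i,x_j\rangle$, whose magnitude is at most $(p-1)\nu(X)$ by the definition of average coherence; invoking \eqref{eqn:cp2} ($\nu\leq\mu/\sqrt n$) drops this mean contribution below the target scale $\mu\|\beta\|_2\sqrt{\log p}$. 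For the fluctuation I would invoke a concentration bound for sums sampled without replacement --- a Hoeffding/Azuma inequality, or the selector/decoupling estimates for random subdictionaries in the spirit of Tropp --- whose variance proxy is $\sum_j|\beta_j|^2|\langle x_i,x_j\rangle|^2\leq\mu^2\|\beta\|_2^2$, yielding a deviation of order $\mu\|\beta\|_2\sqrt{\log p}$ after a union bound over the (at most) $p$ coordinates. Together these give $\max_i|z_i|\leq Z := 10\,\mu(X)\,\|\beta\|_2\sqrt{2\log p}$ with probability $1-O(p^{-1})$, with $p\geq128$ and \eqref{eqn:cp1} used to fold the lower-order terms into the constant. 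I expect this step to be the main obstacle: obtaining a clean constant, keeping the union bound uniform over $i$, and correctly handling the mild difference between $i\in\cS$ (sum over $k-1$ active columns) and $i\notin\cS$ (sum over $k$).

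Finally I would assemble the pieces. With $Z$ and $W$ as above, the first argument of the $\max$ in the definition of $\lambda$ equals $t^{-1}Z$ and the second equals $(1-t)^{-1}\sqrt2\,W$, so $Z+W\leq\lambda$ by the case analysis above and the off-support coordinates are correctly rejected. For the on-support coordinates, substituting $\mu = c_1 n^{-1/\gamma}$, $c_2 = (20c_1)^2$, and $\MAR = k\beta_{\min}^2/\|\beta\|_2^2$ shows the third constraint in \eqref{eqnthm:ems_1} to be exactly $\beta_{\min} > 2t^{-1}Z$, while the second constraint is exactly $\beta_{\min} > 2(1-t)^{-1}\sqrt2\,W$; these two together say precisely $\beta_{\min} > 2\lambda$, and since $\lambda\geq Z+W$ we obtain $\beta_{\min} > \lambda + Z + W$, so every active coordinate survives the threshold. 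The first constraint $n > 2k\log p$ is what keeps the interference concentration in force (it controls the sampling fraction $k/p$ and ensures the $\mu\|\beta\|_2\sqrt{\log p}$ scale is the dominant one in $Z$ after \eqref{eqn:cp2}). A union bound over the noise event, the off-support interference event, and the on-support interference event --- each of probability $O(p^{-1})$ --- then gives $\Pr(\whcS\neq\cS)\leq 6p^{-1}$, completing the argument.
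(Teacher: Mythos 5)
Your outline takes essentially the same route as the paper's proof: the paper formalizes your on/off-support split through a ``statistical orthogonality condition,'' bounds the noise exactly as you do (complex Gaussian tail plus a union bound), and executes your mean-plus-fluctuation analysis of the cross-talk term via a Doob-martingale/complex-Azuma argument in which the conditional mean is bounded by $\sqrt{k}\,\nu\,\|z\|_2$ (this is where \eqref{eqn:cp2} and the condition $k \leq n/(2\log p)$ enter) and the martingale differences by $2\mu d_\ell$ with $\sum_\ell d_\ell^2 \lesssim \|z\|_2^2$, which yields precisely your $Z = 10\,\mu\|\beta\|_2\sqrt{2\log p}$ with failure probability at most $4p^{-1}$. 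The remaining bookkeeping in your proposal---the convexity bound $\max\{t^{-1}Z,(1-t)^{-1}\sqrt{2}W\} \geq Z+W$, the requirement $\beta_{\min} > 2\lambda$, and its translation into the three conditions of \eqref{eqnthm:ems_1}---matches the paper's argument step for step.
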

The proof of this theorem is provided in Section~\ref{sec:proofs}. Note that the parameter `$t$' in Theorem~\ref{thm:ems_cp} can always be fixed a priori (say $t = 1/2$) without affecting the scaling relation in \eqref{eqnthm:ems_1}. In practice, however, $t$ should be chosen so as to reduce the total number of measurements needed to ensure successful model selection; the optimal choice of $t$ in this regard is $t_{opt} = \argmin\limits_{t} \left(\max \bigg\{ \frac{8(1-t)^{-2}}{\SNR\cdot\MAR}2k\log{p}, \Big(\frac{c_2t^{-2}}{\MAR}2k\log{p}\Big)^{\gamma/2}\bigg\}\right)$. Notice also that Theorem~\ref{thm:ems_cp} is best suited for applications where one is interested in quantifying the minimum number of measurements needed to guarantee exact model selection for a given class of signals. Alternatively, it might be the case in some other applications that the problem dimensions are fixed and one is instead interested in specifying the class of signals that leads to successful model selection. The following variant of Theorem~\ref{thm:ems_cp} is best suited in such situations.
\begin{theorem}\label{thm:ems_cp2}
Suppose that the design matrix $X$ satisfies the coherence property and let the noise vector $\eta$ be distributed as $\cCN(\bzero,\sigma^2 I)$. Next, let $p \geq 128$ and choose the threshold $\lambda = \max\Big\{\frac{1}{t}10 \mu \sqrt{n \cdot \SNR}, \frac{1}{1-t} \sqrt{2}\Big\}\sqrt{2\sigma^2\log{p}}$ for any $t \in (0,1)$. Then the OST algorithm (Algorithm~\ref{alg:OST}) satisfies $\Pr(\whcS \not= \cS) \leq 6p^{-1}$ as long as we have that $k \leq n/(2\log{p})$ and
\begin{align}
\label{eqnthm:ems_2}
\MAR > \max \Bigg\{8(1-t)^{-2} \left(\frac{2k\log{p}}{n \cdot \SNR}\right), 400t^{-2} \left(\frac{2k\log{p}}{\mu^{-2}}\right)\Bigg\}.
\end{align}
Here, the probability of failure is with respect to the true model $\cS$ and the complex Gaussian noise vector $\eta$.
\end{theorem}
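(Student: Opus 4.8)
The plan is to obtain Theorem~\ref{thm:ems_cp2} as a direct consequence of Theorem~\ref{thm:ems_cp}, since the two statements concern the same event under the same threshold $\lambda$ and the same coherence property, and differ only in how the sufficient condition is parameterized: Theorem~\ref{thm:ems_cp} fixes the signal class and solves for the number of measurements $n$, whereas Theorem~\ref{thm:ems_cp2} fixes the triple $(n,k,p)$ and instead solves for $\MAR$. Concretely, I would start from the hypotheses of Theorem~\ref{thm:ems_cp2}---namely $p \geq 128$, $k \leq n/(2\log p)$, the stated choice of $\lambda$, and the lower bound \eqref{eqnthm:ems_2} on $\MAR$---and verify that each of the three terms in the maximum appearing in \eqref{eqnthm:ems_1} is dominated by $n$, so that Theorem~\ref{thm:ems_cp} applies verbatim and yields $\Pr(\whcS \neq \cS) \leq 6p^{-1}$.

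The reduction is term-by-term bookkeeping. The hypothesis $k \leq n/(2\log p)$ is precisely $n \geq 2k\log p$, which accounts for the first term of \eqref{eqnthm:ems_1}. For the second term I would merely transpose $n$ and $\MAR$: the inequality $n > \frac{8(1-t)^{-2}}{\SNR\cdot\MAR}\,2k\log p$ is equivalent to $\MAR > 8(1-t)^{-2}\big(\tfrac{2k\log p}{n\cdot\SNR}\big)$, the first entry of the maximum in \eqref{eqnthm:ems_2}. For the third term I would invoke the defining relation $\mu = c_1 n^{-1/\gamma}$, which gives the identity $n^{2/\gamma} = c_1^2\mu^{-2}$, together with $c_2 = (20 c_1)^2$; then $n > \big(\tfrac{c_2 t^{-2}}{\MAR}\,2k\log p\big)^{\gamma/2}$ rearranges to $\MAR > \tfrac{(20 c_1)^2 t^{-2}\,2k\log p}{n^{2/\gamma}} = 400\, t^{-2}\big(\tfrac{2k\log p}{\mu^{-2}}\big)$, the second entry of the maximum in \eqref{eqnthm:ems_2}. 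Since \eqref{eqnthm:ems_2} forces $\MAR$ above the larger of these two quantities, both the second and third conditions of \eqref{eqnthm:ems_1} hold, and applying Theorem~\ref{thm:ems_cp} with the same $t$, $\lambda$, and coherence property closes the argument. Along the way I would record the elementary identity $\SNR_{\min} = \SNR\cdot\MAR$, which is exactly what makes the two displayed forms of \eqref{eqnthm:ems_1} interchangeable and links the ``$\SNR$'' normalization used in \eqref{eqnthm:ems_2} to the ``$\SNR_{\min}$'' normalization underlying Theorem~\ref{thm:ems_cp}.

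Since the analytic substance already resides in Theorem~\ref{thm:ems_cp}, there is no real obstacle; the only care required is in the corner cases of the parameterization. In particular I would check the degenerate value $\gamma = 0$ (that is, $\mu$ a fixed constant not decaying in $n$) admitted by the hypothesis $\gamma \in \{0\}\cup[2,\infty)$, where the identity $n^{2/\gamma} = c_1^2\mu^{-2}$ and the rearrangement of the third term must be read in the appropriate limiting sense, and I would be mindful of the strict-versus-nonstrict discrepancy in the first term, writing $k \leq n/(2\log p)$ where Theorem~\ref{thm:ems_cp} nominally asks for $n > 2k\log p$; this is immaterial because the other two conditions are strict and, if one wishes to be scrupulous, one can simply revisit the single step in the proof of Theorem~\ref{thm:ems_cp} where $n \geq 2k\log p$ suffices. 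Beyond these minor points, the proof is a one-line invocation of Theorem~\ref{thm:ems_cp} followed by the three-way rearrangement above.
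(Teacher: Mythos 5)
Your proposal is correct and matches the paper's route: the paper proves Theorem~\ref{thm:ems_cp2} simply by noting it follows directly from the proof of Theorem~\ref{thm:ems_cp}, whose final equivalences ($\beta_{\min} > \tfrac{1}{1-t}4\sqrt{\sigma^2\log p}$ and $\beta_{\min} > \tfrac{1}{t}20\mu\sqrt{2n\sigma^2\log p\cdot\SNR}$) are exactly your two $\MAR$ rearrangements, and whose argument indeed only needs $k \leq n/(2\log p)$ rather than a strict inequality. Your bookkeeping via $n^{2/\gamma} = c_1^2\mu^{-2}$ and $c_2 = (20c_1)^2$, together with the noted handling of the $\gamma = 0$ and strict-versus-nonstrict corner cases, is sound and adds nothing beyond what the paper's own reduction uses.
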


\begin{algorithm*}[t]
\caption{The Sorted One-Step Thresholding (SOST) Algorithm for Model Selection}
\label{alg:SOST}
\textbf{Input:} An $n \times p$ matrix $X$, a vector $y \in \C^n$, and model order $k$\\
\textbf{Output:} An estimate $\whcS \subset \{1,\dots,p\}$ of the true model $\cS$
\begin{algorithmic}
\STATE $f \leftarrow X^\tH y$ \hfill \COMMENT{Form signal proxy}
\STATE $(\cI, f_s) \leftarrow \text{SORT}\Big(\big(\{1,\dots,p\},f\big)\Big)$ \hfill \COMMENT{Sort the signal proxy} 
\STATE $\whcS \leftarrow \cI[1:k]$ \hfill \COMMENT{Select model via OST}
\end{algorithmic}
\end{algorithm*}
%
Note that the proof of Theorem~\ref{thm:ems_cp2} follows directly from the proof of Theorem~\ref{thm:ems_cp}. There are a few important remarks that need to be made at this point concerning the threshold proposed in Theorem~\ref{thm:ems_cp} and Theorem~\ref{thm:ems_cp2} for the OST algorithm. First, it is easy to see that the proposed threshold is completely agnostic to the model order $k$ and only requires knowledge of the $\SNR$ and the noise variance. Second, some of the bounds in the proof of Theorem~\ref{thm:ems_cp} and extensive simulations suggest that the absolute constant $10$ in the proposed threshold is somewhat conservative and can be reduced through the use of more sophisticated analytical tools (also see Section~\ref{sec:disc}). Finally, while estimating the true model order $k$ tends to be harder than estimating the $\SNR$ and the noise variance $\sigma^2$ in majority of the situations, it might be the case that estimating $k$ is easier in some applications. It is better in such situations to work with a slight variant of the OST algorithm (see Algorithm~\ref{alg:SOST}) that relies on knowledge of the model order $k$ instead and returns an estimate $\whcS$ corresponding to the $k$ largest (in magnitude) entries of $X^H y$. We characterize the performance of this algorithm---which we term as \emph{sorted one-step thresholding} (SOST) algorithm---in terms of the following theorem.
\begin{theorem}[Exact Model Selection Using SOST]\label{thm:sost_ems_cp}
Suppose that the design matrix $X$ satisfies the coherence property and let $\eta$ be distributed as $\cCN(\bzero,\sigma^2 I)$. Next, write $\mu(X)$ as $\mu = c_1 n^{-1/\gamma}$ for some $c_1 > 0$ (which may depend on $p$) and $\gamma \in \{0\}\cup [2,\infty)$. Then the SOST algorithm (Algorithm~\ref{alg:SOST}) satisfies $\Pr(\whcS \not= \cS) \leq 6p^{-1}$ as long as $p \geq 128$ and the number of measurements satisfies
\begin{align}
\nonumber
 n &> \min_{t \in (0,1)} \max \Bigg\{2k\log{p},\frac{8(1-t)^{-2}}{\SNR_{\min}}2k\log{p},\bigg(\frac{c_2t^{-2}}{\MAR}2k\log{p}\bigg)^{\gamma/2}\Bigg\}\\
\label{eqnthm:sost_ems_1}
 &\equiv \min_{t \in (0,1)} \max \Bigg\{2k\log{p},\frac{8(1-t)^{-2}}{\SNR\cdot\MAR}2k\log{p},\bigg(\frac{c_2t^{-2}}{\MAR}2k\log{p}\bigg)^{\gamma/2}\Bigg\}.
\end{align}
Here, the quantity $c_2 > 0$ is as defined in Theorem~\ref{thm:ems_cp}, while the probability of failure is with respect to the true model $\cS$ and the complex Gaussian noise vector $\eta$.
\end{theorem}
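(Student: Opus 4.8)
The plan is to obtain Theorem~\ref{thm:sost_ems_cp} essentially for free from (the proof of) Theorem~\ref{thm:ems_cp}, exploiting the fact that the SOST algorithm never actually uses the value of the threshold $\lambda$, and hence never commits to a particular choice of the free parameter $t$.

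First I would revisit the proof of Theorem~\ref{thm:ems_cp} and isolate from it the following statement: under the coherence property, for any fixed $t \in (0,1)$, and whenever $n$ exceeds the right-hand side of \eqref{eqnthm:ems_1} evaluated at that $t$, the signal proxy $f = X^\tH y$ satisfies the \emph{separation event}
\[ \max_{i \in \cS^c} |f_i| \;<\; \lambda \;<\; \min_{i \in \cS} |f_i| \]
with probability at least $1 - 6p^{-1}$ (with respect to the random model $\cS$ and the noise $\eta$), where $\lambda = \lambda(t)$ is the threshold of Theorem~\ref{thm:ems_cp}. This is not an additional claim: the proof of Theorem~\ref{thm:ems_cp} establishes exactly the two one-sided bounds $|f_i| \leq \lambda$ for $i \in \cS^c$ and $|f_i| > \lambda$ for $i \in \cS$, since that is precisely what forces the hard-thresholding step to return $\whcS = \cS$; their intersection is the separation event above, and it holds with the stated probability by the same union bound already carried out there.

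Next I would observe that on the separation event the SOST algorithm also succeeds: since $|\cS| = k$ (recall the support is assumed to be a uniformly random $k$-subset of $\{1,\dots,p\}$), all $k$ coordinates of $f$ indexed by $\cS$ have magnitude strictly larger than $\lambda$, while every one of the remaining $p - k$ coordinates has magnitude at most $\lambda$. Hence the $k$ largest-magnitude entries of $f$ are exactly those indexed by $\cS$, and because of the strict inequality there is no tie straddling the cutoff; sorting and retaining the top $k$ indices therefore yields $\whcS = \cS$.

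Finally, because SOST takes neither $\lambda$ nor $t$ as input, the above applies simultaneously for every admissible $t$: for each $t \in (0,1)$ for which $n$ exceeds the bound in \eqref{eqnthm:ems_1}, we get $\Pr(\whcS = \cS) \geq 1 - 6p^{-1}$, i.e. $\Pr(\whcS \neq \cS) \leq 6p^{-1}$. Thus the only requirement on $n$ is that it exceed the infimum over $t \in (0,1)$ of the bound in \eqref{eqnthm:ems_1}, which is exactly \eqref{eqnthm:sost_ems_1}; choosing a $t^\star$ that attains (or approaches) this infimum while still leaving $n$ above the corresponding bound finishes the argument. The only point that requires any care is the one emphasized above --- that the right object to extract from the proof of Theorem~\ref{thm:ems_cp} is the two-sided separation event with the full $1-6p^{-1}$ probability, and that it is threshold-agnostic on the algorithmic side --- so there is no genuine analytic obstacle here beyond what has already been established for Theorem~\ref{thm:ems_cp}.
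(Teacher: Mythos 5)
Your proposal is correct and matches the argument the paper has in mind: it omits the proof precisely because, as you show, the proof of Theorem~\ref{thm:ems_cp} already yields the strict separation $\max_{i\in\cS^c}|f_i|\le\lambda(t)<\min_{i\in\cS}|f_i|$ with probability at least $1-6p^{-1}$ for any fixed admissible $t$, on which event the top-$k$ selection of SOST returns $\cS$, and since SOST uses neither $\lambda$ nor $t$ the parameter can be optimized, giving the $\min_{t\in(0,1)}$ in \eqref{eqnthm:sost_ems_1}. Your handling of the tie issue (strict inequality on the support side) and of choosing a $t^\star$ witnessing $n$ above the infimum is exactly the needed care, so nothing is missing.
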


The proof of this theorem is just a slight variant of the proof of Theorem~\ref{thm:ems_cp} and is therefore omitted here. A few remarks are in order now concerning OST and SOST. First, the computational complexity of SOST is comparable with that of OST since efficient sorting algorithms (such as heap sort) tend to have computational complexity of $O(p\log{p})$ only. Second, \eqref{eqnthm:ems_1} and \eqref{eqnthm:sost_ems_1} suggest that knowledge of the true model order $k$ allows SOST to perform better than OST in situations where the threshold parameter $t$ is fixed a priori (cf.~Theorem~\ref{thm:ems_cp}). In this sense, SOST should be preferred over OST for exact model selection \emph{provided} one has accurate knowledge of the true model order $k$. On the other hand, OST should be the algorithm of choice for model-selection problems where it is difficult to obtain a reliable estimate of the true model order. We conclude this discussion by rephrasing Theorem~\ref{thm:sost_ems_cp} for SOST along the lines of Theorem~\ref{thm:ems_cp2} for OST.
\begin{theorem}\label{thm:sost_ems_cp2}
Suppose that the design matrix $X$ satisfies the coherence property. Next, let $p \geq 128$ and let the noise vector $\eta$ be distributed as $\cCN(\bzero,\sigma^2 I)$. Then the SOST algorithm (Algorithm~\ref{alg:SOST}) satisfies $\Pr(\whcS \not= \cS) \leq 6p^{-1}$ provided $k \leq n/(2\log{p})$ and
\begin{align}
\label{eqnthm:sost_ems_2}
\MAR > \min_{t \in (0,1)} \max \Bigg\{8(1-t)^{-2} \left(\frac{2k\log{p}}{n \cdot \SNR}\right), 400t^{-2} \left(\frac{2k\log{p}}{\mu^{-2}}\right)\Bigg\}.
\end{align}
Here, the probability of failure is with respect to the true model $\cS$ and the complex Gaussian noise vector $\eta$.
\end{theorem}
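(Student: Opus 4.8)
The plan is to derive Theorem~\ref{thm:sost_ems_cp2} from the analysis underlying Theorem~\ref{thm:ems_cp} (equivalently, Theorem~\ref{thm:ems_cp2}), exploiting the fact that SOST, unlike OST, never commits to a threshold and is therefore free to ``use'' whichever threshold makes the OST analysis tightest. Fix an arbitrary $t \in (0,1)$, recall the signal proxy $f = X^\tH y = \beta + (X^\tH X - I)\beta + X^\tH\eta$, and introduce the (now purely notional) threshold $\lambda_t = \max\big\{\tfrac{1}{t}10\mu\sqrt{n\cdot\SNR},\,\tfrac{1}{1-t}\sqrt{2}\big\}\sqrt{2\sigma^2\log p}$ of Theorem~\ref{thm:ems_cp}. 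The heart of the proof of Theorem~\ref{thm:ems_cp} is a high-probability event $\cE_t$---over the random support $\cS$ and the Gaussian noise $\eta$, of probability at least $1-6p^{-1}$---on which the interference term $[(X^\tH X - I)\beta]_i$ and the noise term $[X^\tH\eta]_i$ are controlled simultaneously for every $i$, yielding the two-sided separation
\begin{align}
\label{eqn:sost_sandwich}
\max_{i \notin \cS}|f_i| \;<\; \lambda_t \;\le\; \min_{i \in \cS}|f_i|,
\end{align}
whenever $X$ obeys the coherence property, $p \ge 128$, $k \le n/(2\log p)$, and $\MAR$ exceeds the $t$-dependent bound in \eqref{eqnthm:ems_2}. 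This is precisely where \eqref{eqn:cp1} and \eqref{eqn:cp2} enter: roughly, $\mu$ controls the fluctuation of $[(X^\tH X - I)\beta]_i$ about its mean while $\nu$ controls the mean itself through the (normalized) row sums of $X^\tH X - I$.

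Next I would observe that \eqref{eqn:sost_sandwich} on its own forces SOST to succeed. On $\cE_t$ there are exactly $k$ indices with $|f_i| \ge \lambda_t$---precisely those in $\cS$---while the remaining $p-k$ indices satisfy $|f_i| < \lambda_t$; hence the $k$ largest-magnitude entries of $f$ are exactly those indexed by $\cS$, so $\text{SORT}$ returns $\cI[1:k] = \cS$ and $\whcS = \cS$. Thus $\cE_t \subseteq \{\whcS = \cS\}$, and therefore $\Pr(\whcS \ne \cS) \le 6p^{-1}$ for every $t \in (0,1)$ for which the stated hypotheses hold.

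Finally I would discharge the hypothesis using the freedom in $t$. The bound in \eqref{eqnthm:ems_2} diverges as $t \to 0^+$ and as $t \to 1^-$, so $t \mapsto \max\{8(1-t)^{-2}(2k\log p/(n\cdot\SNR)),\,400t^{-2}(2k\log p/\mu^{-2})\}$ attains its infimum at an interior point of $(0,1)$; the hypothesis $\MAR > \min_{t\in(0,1)}(\cdots)$ then supplies a concrete $t_0 \in (0,1)$ at which $\MAR$ strictly exceeds this bound. Applying the previous two paragraphs with $t = t_0$ gives $\Pr(\whcS \ne \cS) \le 6p^{-1}$, which is the claim. Just as Theorem~\ref{thm:ems_cp2} is a reparametrization of Theorem~\ref{thm:ems_cp}, this argument is a reparametrization of the proof of Theorem~\ref{thm:sost_ems_cp}.

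I expect the only real point to verify---more a bookkeeping check than an obstacle---is that the high-probability event produced in the proof of Theorem~\ref{thm:ems_cp} is genuinely the full two-sided sandwich \eqref{eqn:sost_sandwich}, i.e.\ that the $i \in \cS$ lower bound and the $i \notin \cS$ upper bound hold on a \emph{common} event whose complement has probability at most $6p^{-1}$, since this is exactly what lets OST-success with threshold $\lambda_t$ imply SOST-success. This holds by construction: the event $\{\whcS = \cS\}$ for OST run with threshold $\lambda_t$ \emph{is} the event \eqref{eqn:sost_sandwich}. Consequently no fresh probabilistic estimates are needed, and SOST in fact succeeds on an event at least as large as the one that certifies OST---it requires only $\min_{i\in\cS}|f_i| > \max_{i\notin\cS}|f_i|$, which \eqref{eqn:sost_sandwich} implies with room to spare.
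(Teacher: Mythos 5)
Your proposal is correct and matches what the paper intends: it reuses the high-probability event from the proof of Theorem~\ref{thm:ems_cp} (StOC plus the bounded-noise event), notes that the resulting strict separation $\min_{i\in\cS}|f_i| > \max_{i\notin\cS}|f_i|$ is all that SOST's top-$k$ selection needs, and then uses the freedom to pick the best $t$ (since SOST never fixes a threshold) to turn the $t$-dependent $\MAR$ condition of Theorem~\ref{thm:ems_cp2} into the $\min_{t\in(0,1)}$ condition of \eqref{eqnthm:sost_ems_2}---exactly the ``slight variant'' the paper alludes to when omitting this proof. The only cosmetic quibble is that the paper's bounds place the strict inequality on the support side ($\min_{i\in\cS}|f_i| > \lambda \ge \max_{i\notin\cS}|f_i|$) rather than as in your displayed sandwich, but this does not affect the separation argument or the conclusion.
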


The final result that we present in this section concerns the partial model-selection performance of OST. Specifically, note that our focus in this section has so far been on specifying conditions for either the number of measurements or the $\MAR$ of the signal that ensure exact model selection. In many real-world applications, however, the parameters of the problem are fixed and it is not always possible to ensure that either the number of measurements or the $\MAR$ of the signal satisfy the aforementioned conditions. A natural question to ask then is whether the OST algorithm completely fails in such circumstances or whether any guarantees can still be provided for its performance. We address this aspect of the OST algorithm in the following and show that, even if the $\MAR$ of $\beta$ is very small, OST has the ability to identify the locations of the nonzero entries of $\beta$ whose energies are greater than both the noise power and the average signal energy per nonzero entry. In order to make this notion mathematically precise, we first define the $m$\emph{-th largest-to-average ratio} ($\LAR_m$) of $\beta$ as the ratio of the \emph{energy in the $m$-th largest (in magnitude) nonzero entry of $\beta$} and the average signal energy per nonzero entry of $\beta$; that is, $$\LAR_m \doteq \frac{|\beta_{(m)}|^2}{\|\beta\|_2^2/k}$$ where $\beta_{(m)}$ denotes the $m$-th largest nonzero entry of $\beta$ (note that $\MAR \equiv \LAR_k$). We are now ready to specify the partial model-selection performance of the OST algorithm.
\begin{theorem}[Partial Model Selection Using OST]\label{thm:pms_cp2}
Suppose that the design matrix $X$ satisfies the coherence property. Next, let $p \geq 128$ and $\eta$ be distributed as $\cCN(\bzero,\sigma^2 I)$. Finally, fix a parameter $t \in (0,1)$ and choose the threshold $\lambda = \max\Big\{\frac{1}{t}10 \mu \sqrt{n \cdot \SNR}, \frac{1}{1-t} \sqrt{2}\Big\}\sqrt{2\sigma^2\log{p}}$. Then, under the assumption that $k \leq n/(2\log{p})$, the OST algorithm (Algorithm~\ref{alg:OST}) guarantees with probability exceeding $1 - 6p^{-1}$ that $\whcS \subset \cS$ and $\big|\cS - \whcS\big| \leq (k - M)$, where $M$ is the largest integer for which the following inequality holds:
\begin{align}
\label{eqnthm:pms_2}
\LAR_{M} > \max \Bigg\{8(1-t)^{-2} \left(\frac{2k\log{p}}{n \cdot \SNR}\right), 400t^{-2} \left(\frac{2k\log{p}}{\mu^{-2}}\right)\Bigg\}.
\end{align}
Here, the probability of failure is with respect to the true model $\cS$ and the complex Gaussian noise vector $\eta$.
\end{theorem}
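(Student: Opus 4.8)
The plan is to reuse the two probabilistic estimates that drive the proof of Theorem~\ref{thm:ems_cp} and then to append a short deterministic bookkeeping argument in terms of $\LAR_M$. First I would start from the coordinate-wise expansion of the signal proxy,
\[
f_i = \langle \mathrm{x}_i,y\rangle = \beta_i + z_i + w_i, \qquad z_i \doteq \sum_{j\in\cS\setminus\{i\}}\langle \mathrm{x}_i,\mathrm{x}_j\rangle\beta_j, \qquad w_i \doteq \langle \mathrm{x}_i,\eta\rangle,
\]
and isolate two good events. The noise event: since $w_i\sim\cCN(0,\sigma^2)$ we have $\Pr\big(|w_i|>\sqrt{2\sigma^2\log p}\big)=p^{-2}$, so a union bound over the $p$ indices gives $\max_i|w_i|\le\sqrt{2\sigma^2\log p}$ with probability at least $1-p^{-1}$. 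The interference event: invoking the coherence property exactly as in the proof of Theorem~\ref{thm:ems_cp}---\eqref{eqn:cp2} to bound the bias $|\E_\cS z_i|\le\nu\|\beta\|_1\le(\mu/\sqrt n)\sqrt k\,\|\beta\|_2$, \eqref{eqn:cp1} together with a concentration inequality for a sum over the uniformly random support $\cS$ to bound the fluctuation, and $k\le n/(2\log p)$ to make the two pieces fit---one gets a uniform bound $\max_i|z_i|\le C_0\,\mu\,\|\beta\|_2\sqrt{2\log p}$ with an absolute constant $C_0\le 10$, so that together with the noise event the intersection has probability at least $1-6p^{-1}$. The threshold in the statement is calibrated against precisely these two bounds: using $\|\beta\|_2^2=n\sigma^2\,\SNR$ one has $\lambda=\max\{10\mu\|\beta\|_2\sqrt{2\log p}/t,\ \sqrt2\,\sqrt{2\sigma^2\log p}/(1-t)\}$, hence $C_0\mu\|\beta\|_2\sqrt{2\log p}\le t\lambda$ and $\sqrt{2\sigma^2\log p}\le(1-t)\lambda$, and therefore on this event $|z_i|+|w_i|\le t\lambda+(1-t)\lambda=\lambda$ for \emph{every} $i$.

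Everything after that is deterministic on the good event. For $i\notin\cS$, $\beta_i=0$ forces $|f_i|=|z_i+w_i|\le\lambda$, hence $i\notin\whcS$; this already yields $\whcS\subset\cS$. For $i\in\cS$, the reverse triangle inequality gives $|f_i|\ge|\beta_i|-|z_i|-|w_i|\ge|\beta_i|-\lambda$, so $i\in\whcS$ as soon as $|\beta_i|>2\lambda$. The only remaining task is to recognize that, for an index $i\in\cS$ whose entry has rank $m$ among the nonzero entries of $\beta$ (so that $|\beta_i|^2/(\|\beta\|_2^2/k)=\LAR_m$), the condition $|\beta_i|>2\lambda$ is, after squaring, substituting the formula for $\lambda$, and using $\|\beta\|_2^2=n\sigma^2\,\SNR$, exactly the condition $\LAR_m>\max\{8(1-t)^{-2}(2k\log p/(n\cdot\SNR)),\ 400t^{-2}(2k\log p/\mu^{-2})\}$, i.e. the right-hand side of \eqref{eqnthm:pms_2}. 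Consequently, if $M$ is the largest integer for which \eqref{eqnthm:pms_2} holds, then $\LAR_m\ge\LAR_M>(\text{RHS})$ for all $m\le M$, so the $M$ indices of $\cS$ carrying the $M$ largest-magnitude entries of $\beta$ all lie in $\whcS$; thus $|\cS-\whcS|\le k-M$, and combined with $\whcS\subset\cS$ this is the claim.

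The hard part is not in this theorem at all but in the interference estimate $\max_i|z_i|\le C_0\,\mu\,\|\beta\|_2\sqrt{2\log p}$ it borrows: the point there is that the randomly subsampled sum $z_i$ must be shown to concentrate at the scale $\mu\sqrt{\log p}\,\|\beta\|_2$ rather than at the naive deterministic scale $\mu\sqrt k\,\|\beta\|_2$, with a tail small enough to survive a union bound over all $p$ indices and with the constant kept below the $10$ that appears in the threshold---this is where \eqref{eqn:cp1}, \eqref{eqn:cp2}, $p\ge128$, and $k\le n/(2\log p)$ are all consumed. For the present statement, however, that estimate is already available from the proof of Theorem~\ref{thm:ems_cp} and can be quoted verbatim; the genuinely new content---the passage from ``$|\beta_i|>2\lambda$'' to the $\LAR_M$ form of \eqref{eqnthm:pms_2} and the counting of the missed indices---requires no new ideas and only routine algebra.
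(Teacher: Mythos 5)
Your proposal is correct and follows essentially the same route as the paper: condition on the good event inherited from the proof of Theorem~\ref{thm:ems_cp} (the statistical orthogonality bounds with $\epsilon = 10\mu\sqrt{2\log p}$ plus the $\ell_\infty$ noise bound, jointly of probability at least $1-6p^{-1}$), conclude $\whcS \subset \cS$ from the off-support bound, observe that $|f_{\pi_i}| \geq |\beta_{(i)}| - \lambda$ forces $\pi_i \in \whcS$ whenever $|\beta_{(i)}| > 2\lambda$, and translate that inequality algebraically into the $\LAR_M$ condition \eqref{eqnthm:pms_2} before counting the at most $k-M$ missed indices. The only cosmetic differences are your slightly sharper exponential tail for the complex Gaussian noise and your packaging of \eqref{eqn:stoc1} and \eqref{eqn:stoc2} as a single uniform interference bound, neither of which changes the argument.
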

The proof of this theorem, which relies to a great extent on the proof of Theorem~\ref{thm:ems_cp}, is provided in Section~\ref{sec:proofs}. We conclude this section by pointing out that no counterpart of Theorem~\ref{thm:pms_cp2} exists for the SOST algorithm since we can never have $\whcS \subset \cS$ in that case because of the nature of the algorithm.

\subsection{Discussion}
The results reported in this section can be best put into perspective by considering some specific model-selection problems that are commonly studied in the literature and juxtaposing our results with the ones reported in previous works. The rest of this section is devoted to such comparison purposes.

\subsubsection{Gaussian Design Matrices}
Matrices with independent and identically distributed (i.i.d.) $\cN(0,1/n)$ entries (i.e., Gaussian matrices) are perhaps the most widely assumed design matrices in the model-selection literature. In order to specialize our results to Gaussian design matrices, we first need to specify the worst-case coherence $\mu$ and the average coherence $\nu$ of i.i.d. Gaussian matrices. The first lemma that we have in this regard follows immediately from Proposition~\ref{prop:haupt_ip} in Appendix~\ref{app:conc_ineq} through a simple union bound argument.
\begin{lemma}[Worst-Case Coherence of Gaussian Matrices]\label{lem:mu_gaussian}
Let $X$ be an $n \times p$ design matrix with i.i.d. $\cN(0,1/n)$ entries. Then, as long as $n \geq 60\log{p}$, we have that $\mu(X) \leq \sqrt{\frac{15\log{p}}{n}}$ with probability exceeding $1 - 2p^{-1}$.
\end{lemma}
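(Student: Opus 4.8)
The plan is to obtain the bound by a routine union bound over pairs of columns, reducing everything to the single--pair inner--product concentration supplied by Proposition~\ref{prop:haupt_ip} in Appendix~\ref{app:conc_ineq}. First I would recall that, under the running normalization convention, $\mu(X) = \max_{i\neq j}\big|\langle \mathrm{x}_i,\mathrm{x}_j\rangle\big|$, where $\mathrm{x}_i \doteq \mathrm{a}_i/\|\mathrm{a}_i\|_2$ and $\mathrm{a}_1,\dots,\mathrm{a}_p$ are the independent columns of the raw Gaussian matrix, each having i.i.d. $\mathcal{N}(0,1/n)$ entries. Thus controlling $\mu(X)$ amounts to controlling a maximum of $\binom{p}{2}$ normalized inner products (which are dependent, but that is irrelevant for a union bound).

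Second, I would invoke Proposition~\ref{prop:haupt_ip} to control a single normalized inner product: for fixed $i\neq j$ it yields a tail estimate of the form $\Pr\big(|\langle \mathrm{x}_i,\mathrm{x}_j\rangle|\geq \epsilon\big)\leq 2\exp(-c\,n\epsilon^2)$ for an absolute constant $c>0$ (with $c\geq 1/5$ sufficing below), valid once $\epsilon$ is bounded away from $1$, say $\epsilon\leq 1/2$. The only role of the hypothesis $n\geq 60\log p$ is to guarantee that the target level $\epsilon=\sqrt{15\log p/n}$ obeys $\epsilon^2\leq 1/4$, so that this per--pair bound applies. Then a union bound over the at most $p(p-1)/2 < p^2/2$ unordered pairs gives
\[
\Pr\big(\mu(X)\geq \epsilon\big)\;\leq\;\frac{p^2}{2}\cdot 2\exp\!\big(-c\,n\epsilon^2\big)\;=\;p^2\exp\!\big(-c\,n\epsilon^2\big),
\]
and substituting $n\epsilon^2 = 15\log p$ with $c\geq 1/5$ yields $p^2\exp(-3\log p)=p^{-1}\leq 2p^{-1}$, which is exactly the claimed failure probability. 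I would also remark that this same computation is what pins down the numerical constant $15$ in the statement: it is (essentially) the smallest value for which the product of the pair count and the per--pair tail is $O(p^{-1})$ while simultaneously keeping $\epsilon\leq 1/2$.

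There is no genuine obstacle here; the lemma is bookkeeping on top of Proposition~\ref{prop:haupt_ip}. The one mildly delicate point---and the reason the argument is routed through the auxiliary proposition rather than through the elementary observation that $\langle\mathrm{a}_i,\mathrm{a}_j\rangle\mid \mathrm{a}_i\sim\mathcal{N}(0,\|\mathrm{a}_i\|_2^2/n)$---is the passage from the raw columns $\mathrm{a}_i$ to the unit--norm columns $\mathrm{x}_i$: one must jointly handle the Gaussian inner product together with the two random column norms $\|\mathrm{a}_i\|_2,\|\mathrm{a}_j\|_2$, and verify that the lower deviations of these norms (again controlled for $n\gtrsim\log p$) cost only a constant factor in the exponent. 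Since Proposition~\ref{prop:haupt_ip} already packages precisely this normalized tail bound, the proof of the lemma collapses to the union bound displayed above.
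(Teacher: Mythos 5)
Your core computation is exactly the paper's intended argument: apply Proposition~\ref{prop:haupt_ip} with $\sigma^2 = 1/n$ to a fixed pair of columns, use $n \geq 60\log{p}$ to force $\epsilon = \sqrt{15\log{p}/n} \leq 1/2$ so that the denominator $4\sigma^2(n\sigma^2 + \epsilon/2)$ becomes $\tfrac{4}{n}(1+\epsilon/2) \leq \tfrac{5}{n}$, obtain the per-pair tail $2\exp(-n\epsilon^2/5) = 2p^{-3}$, and union bound over the at most $p^2$ pairs to get the stated failure probability $2p^{-1}$; the constants $15$ and $60$ are pinned down exactly as you describe.

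The flaw is in how you dispose of the normalization. Proposition~\ref{prop:haupt_ip} bounds $|\langle \mathrm{x},\mathrm{y}\rangle|$ for two \emph{independent unnormalized} $\cN(\bzero,\sigma^2 I)$ vectors; it says nothing about the inner product of the unit-normalized columns $\mathrm{a}_i/\|\mathrm{a}_i\|_2$, so your closing claim that the proposition ``already packages precisely this normalized tail bound'' is false. The paper never takes that step: here, as in the proof of Lemma~\ref{lem:nu_gaussian}, the raw Gaussian columns (whose $\ell_2$-norms are only approximately one) are used directly, and Lemma~\ref{lem:mu_gaussian} is to be read as a bound on the resulting maximal inner product (cf.\ the remark following it, which already concedes that only an upper-bound surrogate for $\mu$ is being supplied). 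If you insist on exactly unit-norm columns, as the general definition of $\mu(X)$ requires, then the step you wave at---joint control of $\langle\mathrm{a}_i,\mathrm{a}_j\rangle$ and the lower tails of $\|\mathrm{a}_i\|_2, \|\mathrm{a}_j\|_2$ via chi-square concentration---genuinely has to be carried out, and the specific constants ($15$, $60$, $2p^{-1}$) would have to be re-verified, since the norm deficits degrade the exponent and add their own failure probabilities. So: under the paper's (unnormalized) reading your proof is the paper's proof; under the normalized reading it has a patchable but real gap, and attributing its resolution to Proposition~\ref{prop:haupt_ip} is incorrect.
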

\begin{remark}
A cautious reader might argue here that Lemma~\ref{lem:mu_gaussian} only provides an upperbound on the worst-case coherence of Gaussian design matrices. Nevertheless, the results (and the definition of the coherence property) presented earlier in this section remain valid if one replaces $\mu(X)$ with an upperbound $\bar{\mu}(X)$ on $\mu(X)$.
\end{remark}
\begin{lemma}[Average Coherence of Gaussian Matrices]\label{lem:nu_gaussian}
Let $X$ be an $n \times p$ design matrix with i.i.d. $\cN(0,1/n)$ entries. Then, as long as $p > n \geq 60\log{p}$, we have that $\nu(X) \leq \frac{\sqrt{15\log{p}}}{n}$ with probability exceeding $1 - 2p^{-2}$.
\end{lemma}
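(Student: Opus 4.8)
The plan is to bound, for each fixed column index $i$, the quantity $S_i \doteq \sum_{j\ne i}\langle\mathrm{x}_i,\mathrm{x}_j\rangle$ and then take a union bound over $i$, using that $\nu(X) = \frac{1}{p-1}\max_{1\le i\le p}|S_i|$. The key structural fact is a conditioning/rotational-invariance argument: the columns of a matrix with i.i.d.\ $\cN(0,1/n)$ entries become, after normalization, independent points drawn uniformly from the unit sphere $S^{n-1}$. Hence, conditioning on the $i$-th normalized column $\mathrm{x}_i = v$ leaves $\{\mathrm{x}_j\}_{j\ne i}$ i.i.d.\ uniform on $S^{n-1}$, and the family $\{\langle v,\mathrm{x}_j\rangle\}_{j\ne i}$ consists, conditionally, of i.i.d.\ copies of a single coordinate of a uniformly random point on $S^{n-1}$. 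Such a variable is mean-zero, bounded by $1$, and---crucially---sub-Gaussian with variance proxy $O(1/n)$; this last property is exactly the concentration estimate (Proposition~\ref{prop:haupt_ip} in Appendix~\ref{app:conc_ineq}) that already underlies Lemma~\ref{lem:mu_gaussian}.

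Granting this, I would proceed as follows. Conditionally on $\mathrm{x}_i$, $S_i$ is a sum of $p-1$ i.i.d.\ mean-zero random variables each sub-Gaussian with variance proxy $O(1/n)$, so $S_i$ itself is sub-Gaussian with variance proxy $O((p-1)/n)$, and a Chernoff bound gives $\Pr\big(|S_i|>u\,\big|\,\mathrm{x}_i\big)\le 2\exp\big(-c\,n u^2/(p-1)\big)$ for an absolute constant $c$. Taking $u = (p-1)\sqrt{15\log p}/n$ makes the exponent equal to $c\,(p-1)\,(15\log p)/n$, which is at least $15c\log p$ because $p>n$ forces $p-1\ge n$. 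Since this conditional bound does not depend on $\mathrm{x}_i$, it holds unconditionally, and a union bound over $i=1,\dots,p$ yields $\Pr\big(\nu(X)>\sqrt{15\log p}/n\big)\le 2p\cdot p^{-15c}$, which is at most $2p^{-2}$ throughout the admissible range $p>n\ge 60\log p$ (the slack between $15c\log p$ and the $3\log p$ actually needed easily absorbs whatever explicit constant the appendix estimate provides, and the hypothesis $n\ge 60\log p$ makes $n$ large enough that replacing $n$ by $n-1$ in the variance proxy changes nothing essential).

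The main obstacle---and the reason the statement is slightly delicate---is that one must genuinely exploit the \emph{sub-Gaussian} behaviour of $\langle\mathrm{x}_i,\mathrm{x}_j\rangle$ at scale $1/\sqrt n$, rather than just its boundedness or its variance $1/n$. A Hoeffding bound treating each $\langle\mathrm{x}_i,\mathrm{x}_j\rangle$ as an arbitrary $[-1,1]$-valued variable gives only $\nu(X)\precsim\sqrt{\log p/p}$, which fails to imply the claimed $\sqrt{15\log p}/n$ bound unless $n\precsim\sqrt p$; and a Bernstein-type bound using the correct variance $1/n$ but the crude, boundedness-driven linear correction term is also too weak once $\log p\gg 1$. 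It is precisely the spherical (equivalently, Gaussian-inner-product) concentration that eliminates the linear term and supplies the $O((p-1)/n)$ variance proxy for $S_i$. A minor point to verify en route is the legitimacy of the conditioning step---namely that $\mathrm{x}_i$ is independent of $\{\mathrm{x}_j\}_{j\ne i}$ (true by construction) and that conditioning on $\mathrm{x}_i$ makes the summands of $S_i$ mutually independent.
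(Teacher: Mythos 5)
Your argument is correct in outline, but it takes a genuinely different route from the paper's. The paper does not sum up $p-1$ individual concentration bounds: it observes that $\sum_{j\ne i}\langle\mathrm{x}_i,\mathrm{x}_j\rangle = \sqrt{p-1}\,\langle\mathrm{x}_i,\tilde{\mathrm{x}}_i\rangle$ with $\tilde{\mathrm{x}}_i \doteq \frac{1}{\sqrt{p-1}}\sum_{j\ne i}\mathrm{x}_j$, and that $\tilde{\mathrm{x}}_i$ is itself distributed as $\cN(\bzero, I/n)$ and independent of $\mathrm{x}_i$. Hence $\nu(X) = \frac{1}{\sqrt{p-1}}\max_i|\langle\mathrm{x}_i,\tilde{\mathrm{x}}_i\rangle|$, and Proposition~\ref{prop:haupt_ip} applies verbatim with the same $\epsilon=\sqrt{15\log p/n}$ and the same $n\ge 60\log p$ threshold already used in Lemma~\ref{lem:mu_gaussian}; the factor $1/\sqrt{p-1}$ together with $p-1\ge n$ then converts $\sqrt{15\log p/n}$ into $\sqrt{15\log p}/n$. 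This one-line reduction buys exact constants for free and explains why the hypothesis and failure probability mirror Lemma~\ref{lem:mu_gaussian} so closely. Your route---conditioning on $\mathrm{x}_i$ and adding sub-Gaussian variance proxies of the i.i.d.\ summands---proves the same scaling and is more robust (it extends to sub-Gaussian designs where the sum of columns is no longer exactly Gaussian), but it leaves two small items informal: (i) the numerical sub-Gaussian constant of a spherical coordinate must satisfy $15/(2C)\ge 3$, which does hold (the standard cap bound gives $C$ essentially $1$) but deserves a line; and (ii) your claim that the conditional tail bound ``does not depend on $\mathrm{x}_i$'' is only literally true in the normalized (uniform-on-sphere) framing---for the raw $\cN(0,1/n)$ columns that the lemma and the paper actually use, the conditional variance of each summand is $\|\mathrm{x}_i\|_2^2/n$, and the fluctuation of $\|\mathrm{x}_i\|_2^2$ around $1$ has to be absorbed somewhere (this is precisely the role of the $\epsilon/2$ term in the denominator of Proposition~\ref{prop:haupt_ip}, and is where the condition $n\ge 60\log p$ earns its keep). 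Neither point is a fatal gap, but both should be made explicit if you carry the argument through with the stated constants.
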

\begin{proof}
The proof of this lemma is also a direct consequence of Proposition~\ref{prop:haupt_ip} in Appendix~\ref{app:conc_ineq}. Specifically, fix an index $i \in \{1,\dots,p\}$ and define $\tilde{\mathrm{x}}_i \doteq \frac{1}{\sqrt{p-1}} \sum_{j \not= i} \mathrm{x}_j$. Then it is easy to see that $\tilde{\mathrm{x}}_i$ is distributed as $\cN(\bzero, I/n)$ and it is independent of $\mathrm{x}_i$. Therefore Proposition~\ref{prop:haupt_ip} in Appendix~\ref{app:conc_ineq} implies through a simple union bound argument that $\max_i |\langle\mathrm{x}_i, \tilde{\mathrm{x}}_i\rangle| \leq \sqrt{\frac{15\log{p}}{n}}$ with probability exceeding $1 - 2p^{-2}$ as long as $n \geq 60\log{p}$. The proof of the lemma now follows from the fact that $p > n$ and $\nu(X) = \frac{1}{\sqrt{p-1}} \max_i |\langle\mathrm{x}_i, \tilde{\mathrm{x}}_i\rangle|$.
\end{proof}
Lemma~\ref{lem:mu_gaussian} and Lemma~\ref{lem:nu_gaussian} establish that Gaussian design matrices satisfy the coherence property with high probability as long as $n \succsim (\log{p})^2$. Theorem~\ref{thm:ems_cp} (resp. Theorem~\ref{thm:sost_ems_cp}) therefore implies that OST (resp. SOST) correctly identifies the exact model with probability exceeding $1 - O(p^{-1})$ as long as $n \succsim \max\Big\{1, \frac{1}{\SNR\cdot\MAR}, \frac{\log{p}}{\MAR}\Big\} k\log{p}$. In particular, this suggests that if either $\MAR(\beta) = \Theta(1)$ or $\SNR = O(1)$ then OST leads to successful model selection with high probability provided $n \succapprox \max\Big\{1, \frac{1}{\SNR\cdot\MAR}\Big\} k\log{p}$.\footnote{Here, and throughout the rest of this paper, we use the shorthand notation $f(n) \succapprox g(n)$ (resp. $f(n) \precapprox g(n)$) to indicate that $f(n) \succsim g(n)$ (resp. $f(n) \precsim g(n)$) modulo a logarithmic factor.} On the other hand, one of the best known results for model selection using the maximum likelihood algorithm requires that $n \succsim \max\Big\{\frac{k\log{(p-k)}}{\SNR\cdot\MAR}, k\log{(p/k)}\Big\}$ \cite{wainwright:tit09b} (also see \cite{fletcher:tit09,akcakaya:tit10}). This establishes that OST (and its variants) performs near-optimally for Gaussian design matrices provided (i) the $\SNR$ in the measurement system is not too high or (ii) the energy of any nonzero entry of $\beta$ is not too far away from the average energy $\|\beta\|_2^2/k$ and $k$ scales sublinearly with $p$.
\begin{remark}
It is worth pointing out here that somewhat similar results can also be obtained for sub-Gaussian design matrices (i.e., matrices with entries given by i.i.d. bounded random variables, etc.) using standard concentration inequalities. Note also that the preceding discussion regarding Gaussian design matrices strengthens the results of Fletcher et al. \cite{fletcher:tit09} concerning asymptotic (Gaussian) model selection using thresholding (cf.~\cite[Theorem~2]{fletcher:tit09}).
\end{remark}

\subsubsection{Lasso versus OST}
Historically, OST (and its variants) is preferred over the lasso because of its low computational complexity. The results reported in this paper, however, bring forth another important aspect of OST (also see \cite{genovese:sub09}): \emph{OST can lead to successful model selection even when the lasso fails}. Specifically, note that the lasso solution is not even guaranteed to be unique if the minimum singular value of the submatrix of $X$ corresponding to the true model is not bounded away from zero (see, e.g., \cite{zhao:jmlr06, wainwright:tit09}). On the other hand, OST does not require the aforementioned condition for model selection. Note that this is in part due to the fact that model selection using the lasso is in fact a byproduct of signal reconstruction, whereas the aforementioned OST results do not guarantee signal reconstruction without imposing additional constraints on $X$. In other words, we have established in the paper that \emph{model selection is inherently an easier problem than signal reconstruction}.

Finally, it is worth comparing the model-selection performance of OST with that of the lasso for the cases when the lasso does succeed. In this regard, the most general result for model selection using the lasso states that if $X$ is close to being a tight frame in the sense that $\|X\|_2 \approx \sqrt{p/n}$ then the lasso identifies the correct model with probability exceeding $1 - O(p^{-1})$ as long as (i) the nonzero entries of $\beta$ are independent and statistically symmetric around zero, (ii) $k \precsim n/\log{p}$, and (iii) $\MAR \succsim \frac{k\log{p}}{n\cdot\SNR}$ \cite[Theorem~1.3]{candes:annstat09}. On the other hand, assume now that the design matrix $X$ has $\mu(X) \asymp n^{-1/2}$ and $\nu(X) \precsim n^{-1}$; there indeed exist design matrices that satisfy these conditions (e.g., Gaussian matrices, as proved earlier, and Alltop Gabor frames, as proved in Section~\ref{sec:gabor}). We then have from Theorem~\ref{thm:ems_cp2} (resp. Theorem~\ref{thm:sost_ems_cp2}) that OST (resp. SOST) identifies the correct model with probability exceeding $1 - O(p^{-1})$ as long as $k \precsim n/\log{p}$ and $\MAR \succsim \max\Big\{\frac{1}{\SNR}, 1\Big\} \frac{k\log{p}}{n}$. This suggests that, even for the cases in which the lasso succeeds, OST can be guaranteed to perform as well as the lasso in situations where either the energy of any nonzero entry of $\beta$ is not too far away from the average energy $(\MAR = \Theta(1))$ or the $\SNR$ is not too high $(\SNR = O(1))$. Equally importantly, and in contrast to the lasso results reported in \cite{candes:annstat09}, OST is guaranteed to attain this performance \emph{irrespective} of the values of the nonzero entries of the data vector $\beta$.

\subsubsection{Near-Optimality of OST}
We have concluded up to this point that---under certain conditions on $\MAR$ and $\SNR$---the OST algorithm can perform as well as the lasso and it performs near-optimally for Gaussian design matrices. We conclude this discussion by arguing that the OST algorithm in fact performs near-optimally for \emph{any} design matrix that satisfies $\mu(X) \asymp n^{-1/2}$ and $\nu(X) \precsim n^{-1}$ as long as $\MAR = \Theta(1)$ or $\SNR = O(1)$.\footnote{Note that it trivially follows from the Welch bound \cite{welch:tit74} that there exists no design matrix with $p \gg 1$ that satisfies $\mu(X) \asymp n^{-1/\gamma}$ with $\gamma < 2$. On the other hand, there does exist a large body of literature devoted to constructing matrices with $\mu(X) \asymp n^{-1/2}$ \cite{sarwate:seta98}.} In order to accomplish this goal, we first recall the thresholding results obtained by Donoho and Johnstone \cite{donoho:biomet94}---which form the basis of ideas such as the wavelet denoising---for the case of $p \times p$ orthonormal design matrices. Specifically, it was established in \cite{donoho:biomet94} that if $X$ is an orthonormal basis then hard thresholding the entries of $X^H y$ at $\lambda \asymp \sqrt{\sigma^2\log{p}}$ results in oracle-like performance in the sense that one recovers (with high probability) the locations of all the nonzero entries of $\beta$ that are above the noise floor.

Now the first thing to note regarding the results presented earlier in this section is the intuitively pleasing nature of the threshold proposed for the OST algorithm. Specifically, assume that $X$ is an orthonormal design and notice that, since $\mu(X) = 0$ in this case, the threshold $\lambda \asymp \max\Big\{\mu \sqrt{n \cdot \SNR}, 1\Big\}\sqrt{\sigma^2\log{p}}$ proposed earlier reduces to the threshold proposed in \cite{donoho:biomet94} \emph{and} Theorem~\ref{thm:pms_cp2} guarantees that thresholding recovers (with high probability) the locations of all the nonzero entries of $\beta$ that are above the noise floor: $\LAR_m \succsim \frac{k\log{p}}{n\cdot\SNR} \ \Rightarrow \ m \in \whcS$. Now consider instead design matrices that are not necessarily orthonormal but which satisfy $\mu(X) \asymp n^{-1/2}$ and $\nu(X) \precsim n^{-1}$. Then we have from Theorem~\ref{thm:pms_cp2} that OST identifies (with high probability) the locations of the nonzero entries of $\beta$ whose energies are greater than both the noise power and the average signal energy per nonzero entry: $\LAR_m \succsim \max\Big\{\frac{1}{\SNR}, 1\Big\} \frac{k\log{p}}{n}  \ \Rightarrow \ m \in \whcS$. In particular, under the assumption that either $\MAR = \Theta(1)$ (and since $\MAR \leq \LAR_m$) or $\SNR = O(1)$, this suggests that the OST in such situations performs in a near-optimal (oracle-like) fashion in the sense that it recovers (with high probability) the locations of all the nonzero entries of $\beta$ that are above the noise floor \emph{without} requiring the design matrix $X$ to be an orthonormal basis.

\section{Recovery of Sparse Signals Using One-Step Thresholding}\label{sec:sparse_rec}
In this section, we extend our results on model selection using OST to model-order agnostic recovery of $k$-sparse signals. In doing so, we also strengthen the results of Schnass and Vandergheynst \cite{schnass:spl07} for signal recovery using thresholding in at least three key aspects. First, we specify polynomial-time verifiable sufficient conditions under which recovery of $k$-sparse signals using OST succeeds. Second, the threshold that we specify for the OST algorithm (Algorithm~\ref{alg:OST_recon}) does not require knowledge of the model order $k$. Third, we do not impose a statistical prior on the nonzero entries of the data vector $\beta$. Note that, just like \cite{schnass:spl07}, we limit ourselves in this exposition to recovery of $k$-sparse signals in a noiseless setting; extensions of these results to noisy settings would be reported in a sequel to this paper. In other words, the measurement model that we study in this section is $y = X \beta$ and the goal is to recover the $k$-sparse $\beta$ using OST under the assumption that the true model $\cS \doteq \{i \in \{1,\dots,p\}:|\beta_i| > 0\}$ is a uniformly random $k$-subset of $\{1,\dots,p\}$.

\subsection{Main Result}
Intuitively speaking (and as noted in the discussion in Section~\ref{sec:mod_sel}), the problem of sparse-signal recovery is inherently more difficult than the problem of model selection. We capture part of this intuitive notion in the following in terms of the \emph{strong coherence property}.
\begin{definition}[The Strong Coherence Property]
An $n \times p$ design matrix $X$ having unit $\ell_2$-norm columns is said to obey the strong coherence property if the following two conditions hold:
\begin{align}
\label{eqn:scp1}
\tag{SCP-1} \mu(X) &\leq \frac{1}{60\mathrm{e}\log{p}}, \quad \text{and}\\
\label{eqn:scp2}
\tag{SCP-2} \nu(X) &\leq \frac{\mu}{\sqrt{n}}\,.
\end{align}
\end{definition}
\noindent In order to better illustrate the difference between the coherence property and the strong coherence property, note that we have from Lemma~\ref{lem:mu_gaussian} and Lemma~\ref{lem:nu_gaussian} that Gaussian design matrices satisfy the coherence property with high probability as long as $n \succsim (\log{p})^2$. On the other hand, Lemma~\ref{lem:mu_gaussian} and Lemma~\ref{lem:nu_gaussian} suggest that Gaussian design matrices satisfy the strong coherence property with high probability as long as $n \succsim (\log{p})^4$. In other words, there are scaling regimes in which Gaussian design matrices satisfy the coherence property but are not guaranteed to satisfy the strong coherence property. We are now ready to state the main result of this section that makes use of the notation developed earlier in Section~\ref{sec:mod_sel} of the paper.
\begin{theorem}[Sparse-Signal Recovery Using OST]\label{thm:sig_rec}
Suppose that the design matrix $X$ satisfies the strong coherence property and choose the threshold $\lambda = 10\mu\|y\|_2\sqrt{\frac{2\log{p}}{1-\mathrm{e}^{-1/2}}} \,$ for any $p \geq 128$. Then the OST algorithm (Algorithm~\ref{alg:OST_recon}) satisfies $\Pr(\widehat{\beta} \not= \beta) \leq 6 p^{-1}$ as long as
\begin{align}
\label{eqnthm:sig_rec}
 k \leq \min \Bigg\{\frac{p}{c_3^2 \|X\|_2^2 \log{p}}, \frac{\mu^{-2} \MAR}{c_4^2\log{p}}\Bigg\}.
\end{align}
Here, the probability of failure is only with respect to the true model $\cS$ (locations of the nonzero entries of $\beta$), while $c_3, c_4$ are positive numerical constants given by $c_3 \doteq 37\mathrm{e}$ and $c_4 \doteq 43$.
\end{theorem}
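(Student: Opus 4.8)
The plan is to reduce the problem to two events whose failure probability I can control, exactly as in the model-selection analysis, and then to add one extra step that converts correct support identification into exact signal recovery via the least-squares step $\widehat{\beta}_{\cI} = (X_\cI)^\dagger y$. First I would write $f = X^\tH y = X^\tH X \beta$ and split, for each index $i$, $f_i = \beta_i + \sum_{j \in \cS, j \neq i} \langle \mathrm{x}_i, \mathrm{x}_j \rangle \beta_j$; call the second term $z_i$, the ``self-interference.'' The key estimate is that, because $\cS$ is a uniformly random $k$-subset and the design matrix satisfies the strong coherence property \eqref{eqn:scp1}--\eqref{eqn:scp2}, the quantity $\max_i |z_i|$ is, with probability at least $1 - 4p^{-1}$ or so, bounded by something like $10 \mu \|\beta\|_2 \sqrt{2\log p / (1 - \mathrm{e}^{-1/2})}$. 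This is where the strong coherence property (with its $\log p$ rather than $\sqrt{\log p}$ denominator) is needed: one invokes a concentration bound for sums of the form $\sum_{j \in \cS} \langle \mathrm{x}_i, \mathrm{x}_j\rangle \beta_j$ over a random index set, of the type attributed to Tropp in \cite{tropp:cras08}, for which the relevant ``degrees of freedom'' is $k$ and the tail requires $\mu k \log p \precsim 1$; the average-coherence bound \eqref{eqn:scp2} is what lets one center this sum so that the mean term is negligible. Since $\|y\|_2 = \|X\beta\|_2$ and $\|X_\cS\|_2$ is close to $1$ under the strong coherence property (off-diagonal Gershgorin bound: $\|X_\cS^\tH X_\cS - I\|_2 \leq \mu(k-1) \leq 1/2$, say), one has $\|y\|_2 \asymp \|\beta\|_2$, so the data-dependent threshold $\lambda = 10\mu\|y\|_2\sqrt{2\log p/(1-\mathrm{e}^{-1/2})}$ is, up to constants, the bound on $\max_i |z_i|$.

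The second and third steps are the support-recovery argument. Conditioned on the good event $\{\max_i |z_i| \leq \lambda\}$: for $i \notin \cS$ we have $f_i = z_i$ (actually $f_i = \sum_{j \in \cS}\langle \mathrm{x}_i,\mathrm{x}_j\rangle\beta_j$, same bound applies), so $|f_i| \leq \lambda$ and $i \notin \cI$ --- hence $\cI \subseteq \cS$, no false positives. For $i \in \cS$ with $|\beta_i|$ large enough, $|f_i| \geq |\beta_i| - |z_i| \geq |\beta_i| - \lambda > \lambda$ provided $|\beta_i| > 2\lambda$; since $\beta_{\min}^2 = \MAR \cdot \|\beta\|_2^2/k$ and $\lambda \asymp \mu\|\beta\|_2\sqrt{\log p}$, the condition $\beta_{\min} > 2\lambda$ becomes precisely $\MAR \succsim \mu^2 k \log p$, i.e.\ $k \precsim \mu^{-2}\MAR/\log p$, which is the second term in \eqref{eqnthm:sig_rec}. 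So on the good event, $\cI = \cS$ exactly. The first term in \eqref{eqnthm:sig_rec}, $k \precsim p/(\|X\|_2^2\log p)$, is what the Tropp-style concentration bound requires for the random-support conditioning to behave (it is the condition under which $X_\cS$ has well-behaved spectrum / the conditional expectation bounds go through --- essentially $k\|X\|_2^2/p$ small controls the ``column-norm of the complement'' type terms). I would extract both conditions as the hypotheses under which the good event holds with probability $\geq 1 - 6p^{-1}$, carefully tracking the numerical constants $c_3 = 37\mathrm{e}$ and $c_4 = 43$ through the concentration inequality.

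Finally, once $\cI = \cS$, the recovery step gives $\widehat{\beta}_\cI = (X_\cS)^\dagger y = (X_\cS)^\dagger X_\cS \beta_\cS = \beta_\cS$ because $X_\cS$ has full column rank (again by the Gershgorin bound $\|X_\cS^\tH X_\cS - I\|_2 < 1$, which holds since $\mu(k-1) < 1$ under \eqref{eqn:scp1} and $k \precsim \mu^{-2}/\log p$), and $\widehat{\beta}_{\cI^c} = 0 = \beta_{\cI^c}$; hence $\widehat{\beta} = \beta$. The main obstacle I anticipate is the concentration step: getting the correct constant in the bound on $\max_i |z_i|$ over a uniformly random support, using \cite{tropp:cras08}, and verifying that the two conditions in \eqref{eqnthm:sig_rec} with the stated constants $c_3, c_4$ are exactly what that bound demands --- everything else (the Gershgorin estimates, the threshold comparison, the least-squares identity) is routine. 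A secondary subtlety is that $\lambda$ depends on $\|y\|_2$ rather than $\|\beta\|_2$, so one must handle the two-sided comparison $\|y\|_2 \asymp \|\beta\|_2$ inside the probability bound rather than deterministically, though under the strong coherence property this is immediate from the spectral bound on $X_\cS$.
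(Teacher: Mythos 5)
Your skeleton (bound the self-interference of the proxy over the random support, deduce $\cI=\cS$ by comparing $\beta_{\min}$ and the off-support proxy values with the threshold, then recover $\beta$ exactly via the pseudoinverse) is the same as the paper's. The genuine gap is in how you control the spectrum of $X_\cS$. You invoke the deterministic Ger\v{s}gorin bound $\|X_\cS^\tH X_\cS - I\|_2 \leq \mu(k-1) \leq 1/2$ twice: to get $\|y\|_2 \asymp \|\beta\|_2$ (so that the data-dependent threshold can be compared with the interference bound) and to get full column rank of $X_\cI$ in the least-squares step. That bound requires $k \precsim \mu^{-1}$, whereas \eqref{eqnthm:sig_rec} allows $k$ up to $\mu^{-2}\MAR/(c_4^2\log p)$; for an Alltop Gabor frame with $\mu = n^{-1/2}$ and $\MAR = \Theta(1)$ the theorem covers $k \asymp n/\log p$, for which $\mu(k-1) \asymp \sqrt{n}/\log p \to \infty$. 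Your claim that $\mu(k-1)<1$ follows from \eqref{eqn:scp1} and $k \precsim \mu^{-2}/\log p$ is backwards: \eqref{eqn:scp1} gives $\mu\log p \leq (60\mathrm{e})^{-1}$, so $\mu k$ may be as large as order $(\mu\log p)^{-1} \geq 60\mathrm{e}$. The whole point of the theorem is to go beyond the square-root bottleneck, and a Ger\v{s}gorin argument cannot. The paper instead makes the spectral control probabilistic over the random support: Proposition~\ref{prop:norm_sm} (built on Tropp's moment bound, Proposition~\ref{prop:ex_norm_sm}) gives $\|X_\Pi^\tH X_\Pi - I\|_2 < \mathrm{e}^{-1/2}$ with probability at least $1-2p^{-1}$, and it is exactly this step that consumes $\mu \leq (60\mathrm{e}\log p)^{-1}$ and $k \leq p/(c_3^2\|X\|_2^2\log p)$ with $c_3 = 37\mathrm{e}$. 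Conditioned on that event one has $\sqrt{1-\mathrm{e}^{-1/2}}\,\|z\|_2 < \|y\|_2 < \sqrt{1+\mathrm{e}^{-1/2}}\,\|z\|_2$ (whence the factor $\sqrt{1-\mathrm{e}^{-1/2}}$ in the threshold) and invertibility of $X_\Pi^\tH X_\Pi$; its failure probability $2p^{-1}$ plus the $4p^{-1}$ from the interference bound give the stated $6p^{-1}$.

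A related misattribution: the bound on $\max_i$ of the interference is not a Tropp-style result and does not need the $1/\log p$ decay of \eqref{eqn:scp1} or the condition $k \precsim p/(\|X\|_2^2\log p)$. In the paper it is the statistical orthogonality bound of Lemmas~\ref{lem:stoc1} and~\ref{lem:stoc2} (Doob martingale plus the complex Azuma inequality), where the average-coherence condition \eqref{eqn:scp2} bounds the martingale's starting value; only the weaker requirement $\mu \precsim (\log p)^{-1/2}$ of \eqref{eqn:cp1}, together with $k \leq n/(2\log p)$, is needed to obtain $\epsilon = 10\mu\sqrt{2\log p}$ with failure probability at most $4p^{-1}$, exactly as in the proof of Theorem~\ref{thm:ems_cp}. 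So the strong coherence property and the $\|X\|_2$ condition are consumed entirely by the random-submatrix spectral bound, not by the concentration step where you place them. Once you replace the Ger\v{s}gorin estimate with Proposition~\ref{prop:norm_sm} and rewire the hypotheses accordingly, the remainder of your argument (threshold comparisons yielding $\cI=\cS$ under $k \leq \mu^{-2}\MAR/(c_4^2\log p)$, then $\widehat{\beta}_{\cI} = (X_\cI)^\dagger X_\Pi z = \beta_\cS$) goes through as in the paper.
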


The significance of this theorem can be best put into perspective by considering the case of the design matrix $X$ being an approximately tight frame in the sense that $\|X\|_2 \approx \sqrt{p/n}$; indeed, we have that Gaussian design matrices satisfy this condition with high probability \cite{rudelson:icm10} and that Gabor frames generated from any (unit-norm) nonzero vector satisfy $\|X\|_2 \equiv \sqrt{p/n}~(= \sqrt{n})$ \cite{lawrence:jfaa05}. It then follows from Theorem~\ref{thm:sig_rec} that if $X$ satisfies the strong coherence property then OST exactly recovers any $k$-sparse vector $\beta$ with high probability as long as $k \precapprox \mu^{-2} \MAR$; in particular, if we assume that $\MAR = \Theta(1)$ then this condition reduces to $k \precapprox \mu^{-2}$. On the other hand, low-complexity sparse-recovery algorithms such as subspace pursuit \cite{dai:tit09}, CoSaMP \cite{needell:acha09}, and iterative hard thresholding \cite{blumensath:acha09} all rely on the restricted isometry property (RIP) \cite{candes:cras08}. Therefore, the guarantees provided in \cite{dai:tit09, needell:acha09, blumensath:acha09} for the case of generic design matrices are limited to $k$-sparse signals that satisfy $k \precsim \mu^{-1}$, which is much weaker than the $k \precapprox \mu^{-2}$ scaling claimed here.\footnote{Note that the $k \precsim \mu^{-1}$ claim is an easy consequence of the \emph{Ger\v{s}gorin circle theorem} \cite{gersgorin:ians31}; see, for example, \cite{devore:num98,bajwa:ciss08,tropp:acha08,haupt:tit08sub}.} We conclude this section by pointing out that if one does have knowledge of the true model order then it can be shown through a slight variation of the proof of Theorem~\ref{thm:sig_rec} that SOST (the sorted variant of the OST) can also recover sparse signals with high probability---the only difference in that case being that the constant $c_4$ in Theorem~\ref{thm:sig_rec} gets replaced with a smaller constant $c_4^\prime \doteq \sqrt{800}$.

\section{Why Gabor Frames?}\label{sec:gabor}
Our focus in Section~\ref{sec:mod_sel} and Section~\ref{sec:sparse_rec} has been on establishing that OST leads to successful model selection and sparse-signal recovery under certain conditions on three global parameters of the design matrix $X$: $\mu(X)$, $\nu(X)$, and $\|X\|_2$. As noted earlier, one particular class of design matrices that satisfies these conditions is the class of random sub-Gaussian matrices. In contrast, our focus in this section is on establishing that Gabor frames---which are collections of time- and frequency-shifts of a nonzero seed vector in $\C^n$---also tend to satisfy the aforementioned conditions on the matrix geometry. Note that Gabor frames constitute an important class of design matrices because of the facts that (i) Gabor frames are completely specified by a total of $n$ numbers that describe the seed vector, (ii) multiplications with Gabor frames (and their adjoints) can be efficiently carried out using algorithms such as the \emph{fast Fourier transform}, (iii) Gabor frames arise naturally in many important application areas such as communications, radar, and signal/image processing, and (iv) there exist deterministic constructions of Gabor frames that (as shown next) are nearly-optimal in terms of the requisite conditions on $\mu(X)$, $\nu(X)$, and $\|X\|_2$.

\subsection{Geometry of Gabor Frames and Its Implications}
A (finite) frame for $\mathbb{C}^n$ is defined as any collection of $p \geq n$ vectors that span the $n$-dimensional Hilbert space $\mathbb{C}^n$ \cite{christensen:08}. Gabor frames for $\mathbb{C}^n$ constitute an important class of frames, having applications in areas such as communications \cite{bajwa:allerton08} and radar \cite{herman:tsp09}, that are constructed from time- and frequency-shifts of a nonzero seed vector in $\C^n$. Specifically, let $g \in \mathbb{C}^n$ be a unit-norm seed vector and define $T$ to be an $n \times n$ \emph{time-shift matrix} that is generated from $g$ as follows
\begin{align}
    T(g) \doteq \begin{bmatrix}
        g_1 &  	g_n &  	&  		& 	g_2 \\
        g_2 &  	g_1 & 		\ddots &  	&  	\vdots\\
        \vdots &  	\vdots & 	\ddots 	&  	& 	g_n\\
        g_n &  	g_{n-1} &  	&  		& 	g_1
    \end{bmatrix}
\end{align}
where we write $T = T(g)$ to emphasize that $T$ is a matrix-valued function on $\mathbb{C}^n$. Next, denote the collection of $n$ samples of a discrete sinusoid with frequency $2\pi\frac{m}{n}, m \in \{0,\dots,n-1\}$ as $\omega_m \doteq \begin{bmatrix} \mathrm{e}^{j2\pi\frac{m}{n}0} & \dots & \mathrm{e}^{j2\pi\frac{m}{n}(n-1)}\end{bmatrix}^\tT$. Finally, define the corresponding $n \times n$ diagonal \emph{modulation matrices} as $W_m = \text{diag}(\omega_m)$. Then the Gabor frame generated from $g$ is an $n \times n^2$ block matrix of the form
\begin{align}
\label{eqn:gabor_def}
 X =
    \begin{bmatrix}
        W_{0} T & W_1 T & \dots & W_{n-1} T
    \end{bmatrix}.
\end{align}
In words, columns of the Gabor frame $X$ are given by downward circular shifts and modulations (frequency shifts) of the seed vector $g$. We are now ready to state the first main result concerning the geometry of Gabor frames, which follows directly from \cite{lawrence:jfaa05}.
\begin{proposition}[Spectral Norm of Gabor Frames \cite{lawrence:jfaa05}]\label{lem:gf_tight}
Gabor frames generated from nonzero (unit-norm) seed vectors are tight frames; in other words, we have that $\|X\|_2 = \sqrt{n}$\,.
\end{proposition}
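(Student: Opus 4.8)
The plan is to prove the stronger statement $X X^\tH = n\,I_n$, which simultaneously shows that $X$ is a tight frame with frame bound $n$ and that every nonzero singular value of $X$ equals $\sqrt{n}$, hence $\|X\|_2 = \sqrt{n}$. Using the block form in \eqref{eqn:gabor_def}, stacking gives $X X^\tH = \sum_{m=0}^{n-1} (W_m T)(W_m T)^\tH = \sum_{m=0}^{n-1} W_m (T T^\tH) W_m^\tH$, so the whole computation reduces to understanding $T T^\tH$ and the effect of conjugating it by the diagonal modulation matrices $W_m$.

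First I would record that $T = T(g)$ is a circulant matrix (its $k$-th column is the downward circular shift of $g$ by $k-1$), so $T T^\tH$ is again circulant: its $(i,j)$ entry depends only on $(i-j) \bmod n$, and I write it as $r_{i-j}$, where $r_d$ is the circular autocorrelation of $g$ at lag $d$. The only value I actually need is $r_0 = \sum_{\ell=1}^{n} |g_\ell|^2 = \|g\|_2^2 = 1$, since $g$ is unit-norm. Next I would compute the effect of the modulation: since $W_m = \diag(\omega_m)$ with $(\omega_m)_i = \mathrm{e}^{j 2\pi m (i-1)/n}$, conjugation multiplies the $(i,j)$ entry of $T T^\tH$ by $(\omega_m)_i \,\overline{(\omega_m)_j} = \mathrm{e}^{j 2\pi m (i-1)/n}\,\mathrm{e}^{-j 2\pi m (j-1)/n} = \mathrm{e}^{j 2\pi m (i-j)/n}$.

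Summing over $m$ then invokes the orthogonality of the characters of $\Z/n\Z$: $\sum_{m=0}^{n-1} \mathrm{e}^{j 2\pi m (i-j)/n}$ equals $n$ when $i \equiv j \pmod n$ and $0$ otherwise. Therefore $(X X^\tH)_{ij} = r_{i-j}\sum_{m=0}^{n-1}\mathrm{e}^{j 2\pi m (i-j)/n} = n\, r_0\, \delta_{ij} = n\,\delta_{ij}$, i.e.\ $X X^\tH = n\, I_n$, and the claim follows. I do not expect a genuine obstacle here, as this is a short direct calculation; the only points requiring care are fixing consistent indexing conventions for the circulant $T$ and the modulation vectors $\omega_m$ so the phase bookkeeping in the conjugation step is correct, and recognizing that the off-diagonal cancellation is precisely the discrete-Fourier orthogonality relation and does not depend on the particular seed $g$ — only $\|g\|_2 = 1$ is used.
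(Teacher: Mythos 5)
Your computation is correct: conjugating $TT^\tH$ by $W_m$ multiplies its $(i,j)$ entry by $\mathrm{e}^{j2\pi m(i-j)/n}$, summing over $m$ and using character orthogonality kills every off-diagonal entry, and the diagonal entries of $TT^\tH$ are the squared $\ell_2$-norms of the rows of the circulant $T$, each equal to $\|g\|_2^2 = 1$; hence $XX^\tH = n\,I$, so $X$ is a tight frame with frame bound $n$ and $\|X\|_2 = \sqrt{n}$. Note that the paper offers no proof of this proposition at all---it defers entirely to \cite{lawrence:jfaa05}---so your argument is a self-contained verification rather than a retracing of the paper's steps, and it is the standard elementary route (frame operator diagonalized by discrete-Fourier orthogonality) as opposed to more abstract arguments that deduce tightness from the fact that the frame operator commutes with all time--frequency shifts. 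Two minor observations: the circulant structure of $TT^\tH$ is not actually needed, since the character sum annihilates the off-diagonal entries whatever they are, and the only property of $g$ used is that every row of $T$ has unit $\ell_2$-norm; and your conclusion $XX^\tH = n\,I$ is slightly stronger than the bare norm claim, since it pins every singular value of $X$ at $\sqrt{n}$, which is precisely the tightness assertion of the proposition.
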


Recall from Theorem~\ref{thm:sig_rec} and the subsequent discussion in Section~\ref{sec:sparse_rec} that design matrices with small spectral norms are particularly well-suited for recovery of $k$-sparse signals. In this regard, Proposition~\ref{lem:gf_tight} implies that Gabor frames are the best that one can hope for in terms of the spectral norm. The next result that we prove concerns the average coherence of Gabor frames.
\begin{theorem}[Average Coherence of Gabor Frames]\label{thm:gf_avc}
Let $X$ be a Gabor frame generated from a unit-norm seed vector $g \in \C^n$. Then, using the notation $g_{\max} \doteq \max_i |g_i|$ and $g_{\min} \doteq \min_i |g_i|$, the average coherence of $X$ can be bounded from the above as follows:
\begin{align}
	\nu(X) \leq \frac{n\,g_{\max}(\sqrt{n} - g_{\min}) + 1 - n\,g_{\min}^2}{n^2 - 1}.
\end{align}
\end{theorem}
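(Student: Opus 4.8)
The plan is to turn the maximum defining $\nu(X)$ into an exact closed form using the group structure of the Gabor frame, and then estimate that closed form.

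\emph{Step 1 (exact evaluation of the column sums).} Index the $n^2$ columns of $X$ by pairs $(a,b)\in\{0,\dots,n-1\}^2$, so that the $(a,b)$-th column, call it $x_{(a,b)}$, has $\ell$-th entry $\mathrm{e}^{j2\pi b\ell/n}g_{\ell-a}$ (all indices modulo $n$). Summing the Hermitian inner products of a fixed column against all columns and interchanging the order of summation,
\[
\sum_{(a',b')}\big\langle x_{(a,b)},x_{(a',b')}\big\rangle \;=\; \sum_{\ell}\overline{g_{\ell-a}}\,\Big(\sum_{b'}\mathrm{e}^{j2\pi(b'-b)\ell/n}\Big)\Big(\sum_{a'}g_{\ell-a'}\Big).
\]
The inner sum over the $n$ frequency shifts equals $n$ when $\ell\equiv 0$ and $0$ otherwise, so only $\ell=0$ survives and the double sum collapses to $n\,\overline{g_{-a}}\,G$ with $G\doteq\sum_{d}g_d$. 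Removing the diagonal term $\langle x_{(a,b)},x_{(a,b)}\rangle=1$, and noting the result is independent of the frequency index $b$, I obtain
\[
\nu(X)\;=\;\frac{1}{n^2-1}\,\max_{c}\big|\,n\,\overline{g_c}\,G-1\,\big|.
\]
This is the clean part of the argument: the collapse is exactly the time--frequency (Heisenberg--Weyl) structure of the frame at work.

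\emph{Step 2 (per-index estimate).} Fix $c$ and set $h\doteq|g_c|\in[g_{\min},g_{\max}]$. Splitting off the diagonal term hidden inside $G$, I write $n\overline{g_c}G-1=(nh^2-1)+n\overline{g_c}\sum_{d\neq c}g_d$, whence
\[
\big|\,n\overline{g_c}G-1\,\big|\;\le\;\big|nh^2-1\big|\;+\;nh\sum_{d\neq c}|g_d|.
\]
I will keep two complementary bounds for the remaining sum in reserve: the elementary $\sum_{d\neq c}|g_d|\le\sqrt n-g_{\min}$ (from the Cauchy--Schwarz estimate $\sum_d|g_d|\le\sqrt n$ together with $|g_c|\ge g_{\min}$), and the Cauchy--Schwarz estimate over the $n-1$ off-diagonal entries, $\sum_{d\neq c}|g_d|\le\sqrt{(n-1)(1-h^2)}$.

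\emph{Step 3 (scalar optimization).} It remains to check that $|nh^2-1|+nh\sum_{d\neq c}|g_d|$ never exceeds $ng_{\max}(\sqrt n-g_{\min})+1-ng_{\min}^2$. I split according to whether $nh^2\le 1$ or $nh^2>1$ (recall that $g_{\min}^2\le 1/n\le g_{\max}^2$ always). If $nh^2\le1$, then $|nh^2-1|=1-nh^2\le 1-ng_{\min}^2$ and, using the first bound above, $nh\sum_{d\neq c}|g_d|\le nh(\sqrt n-g_{\min})\le ng_{\max}(\sqrt n-g_{\min})$, which already gives the claim. If $nh^2>1$, the first bound is too weak, so I use the Cauchy--Schwarz bound and analyze $\psi(h)\doteq nh^2-1+nh\sqrt{(n-1)(1-h^2)}$ on $[1/\sqrt n,\,g_{\max}]$; a short computation shows $\psi$ has a single stationary point, at $h^2=\tfrac12+\tfrac1{2\sqrt n}$, so its maximum on that interval is attained either at $h=g_{\max}$ or at this point, and in each case the desired inequality reduces to elementary manipulations using $g_{\max}^2\le 1-(n-1)g_{\min}^2$ and $g_{\min}\le1/\sqrt n$.

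The main obstacle is Step 3. The estimate is essentially tight --- for a constant-modulus real seed vector both sides equal $(n-1)/(n^2-1)=1/(n+1)$ --- so there is very little slack, and a single blunt application of the triangle inequality (or of $|G|\le\sqrt n$) loses too much in both the ``flat'' regime (where $g_{\min}$ is not small) and the ``peaked'' regime (where $g_{\max}$ is large). Keeping both bounds on $\sum_{d\neq c}|g_d|$ available and splitting on the sign of $nh^2-1$ is what makes the constants work out. Step~1, by contrast, is essentially automatic once the Gabor structure is written down.
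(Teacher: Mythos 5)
Your Steps 1 and 2 are correct and are essentially the paper's argument: the paper splits the off-diagonal column sum into a time-shift part and a frequency-shift part and arrives at exactly your closed form, $\sum_{j\not=i}\langle \mathrm{x}_i,\mathrm{x}_j\rangle = n\,\overline{g_c}\,G-1$ with $G=\sum_d g_d$, and then applies the same triangle-inequality decomposition $|n\overline{g_c}G-1|\le |n|g_c|^2-1|+n|g_c|\sum_{d\not=c}|g_d|$. Your Case 1 of Step 3 is also fine, and you have correctly located the one delicate regime: the paper disposes of the second term with the assertion $\max_r|{-1}+n|g_r|^2|\le 1-n g_{\min}^2$, which is invalid precisely when $n g_{\max}^2-1>1-ng_{\min}^2$, i.e., in your Case 2.

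The genuine gap is that Case 2 cannot be closed, because the inequality you need there --- and in fact the stated bound itself --- is false. Take $n=9$ and a real positive seed vector with one entry $h=0.35$ and eight equal entries $m=\sqrt{(1-h^2)/8}\approx 0.33119$, so that $g_{\max}=h$, $g_{\min}=m$, and $G=\|g\|_1\approx 2.9995$. Your exact formula gives $\max_c\,(n g_c G-1)=9hG-1\approx 8.448$, whereas the numerator of the claimed bound is $9h(3-m)+1-9m^2\approx 8.420$. More generally, writing $h=n^{-1/2}+\delta$ and spreading the remaining energy uniformly, the excess of the true value over the claimed bound is $\tfrac{(n-4)\sqrt{n}}{n-1}\,\delta+O(\delta^2)$, so the bound fails for every $n\ge 5$ and all sufficiently small $\delta>0$. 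Note that the failure occurs at the left endpoint $h^2\downarrow 1/n$ of your Case-2 interval, where both sides are tight, not at the stationary point of your $\psi$; so no amount of care in the scalar optimization will rescue the argument. The statement must instead be weakened, e.g., by replacing $1-ng_{\min}^2$ with $\max\{1-ng_{\min}^2,\ ng_{\max}^2-1\}$; this costs nothing for the unimodal seed vectors (such as the Alltop vector) to which the theorem is applied later, since there $g_{\min}=g_{\max}=n^{-1/2}$ and both expressions vanish.
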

\begin{proof}
In order to facilitate the proof of this theorem, we first map the indices of the columns of $X$ from $\{1,\dots,n^2\}$ to $\cC \doteq \{0,\dots,n-1\} \times \{0,\dots,n-1\}$ as follows
\begin{align}
 \kappa : i \mapsto \left((i \text{ mod } n)-1, \left\lfloor\frac{i-1}{n}\right\rfloor\right).
\end{align}
In words, $\kappa(i) = (\ell,m)$ signifies that the $i$-th column of $X$ corresponds to the $(\ell+1)$-th column of $W_m T$. Next, fix an index $i$ (resp. $\kappa(i) = (\ell,m)$) and make use of the above reindexing to write
\begin{align}
\label{eqn:lem_gfavc}
	\sum_{\substack{j=1\\j\not=i}}^{n^2} \langle\mathrm{x}_{\kappa(i)}, \mathrm{x}_{\kappa(j)}\rangle &= \sum_{\substack{(\ell^\prime\!,m^\prime) \in \cC\\(\ell^\prime\!,m^\prime)\not=(\ell,m)}} \langle\mathrm{x}_{\ell,m}, \mathrm{x}_{\ell^\prime\!,m^\prime}\rangle = \sum_{\substack{\ell^\prime=0\\ \ell^\prime \not= \ell}}^{n-1} \ \sum_{m^\prime = 0}^{n-1} \langle\mathrm{x}_{\ell,m}, \mathrm{x}_{\ell^\prime\!,m^\prime}\rangle + \sum_{\substack{m^\prime=0\\ m^\prime \not= m}}^{n-1} \langle\mathrm{x}_{\ell,m}, \mathrm{x}_{\ell,m^\prime}\rangle.
\end{align} 
Finally, note that we can explicitly write the columns of $X$ using \eqref{eqn:gabor_def} for any $(\ell,m) \in \cC$ as follows
\begin{align}
\label{eqn:gabor_col}
 \mathrm{x}_{\ell,m} \doteq \begin{bmatrix}
                         g_{(1 - \ell)_n} \mathrm{e}^{j2\pi\frac{m}{n}0} & \dots & g_{(n - \ell)_n} \mathrm{e}^{j2\pi\frac{m}{n}(n-1)}
                        \end{bmatrix}^\tT
\end{align}
where we use the notation $g_{(q)_n}$ as a shorthand for $g_{q \text{ mod } n}$.

The rest of the proof now follows from simple algebraic manipulations. Specifically, it is easy to see from \eqref{eqn:gabor_col} that the first term in \eqref{eqn:lem_gfavc} can be simplified as
\begin{align}
\nonumber
 	\sum_{\substack{\ell^\prime=0\\ \ell^\prime \not= \ell}}^{n-1} \ \sum_{m^\prime = 0}^{n-1} \langle\mathrm{x}_{\ell,m}, \mathrm{x}_{\ell^\prime\!,m^\prime}\rangle &= \sum_{q=1}^{n} \sum_{\substack{\ell^\prime=0\\ \ell^\prime \not= \ell}}^{n-1} g_{(q - \ell)_n}^*g_{(q - \ell^\prime)_n} \sum_{m^\prime = 0}^{n-1} \mathrm{e}^{j2\pi\frac{q-1}{n}(m^\prime - m)}\\
\nonumber
	&= \sum_{q=2}^{n} \sum_{\substack{\ell^\prime=0\\ \ell^\prime \not= \ell}}^{n-1} g_{(q - \ell)_n}^*g_{(q - \ell^\prime)_n} \sum_{m^\prime = 0}^{n-1} \mathrm{e}^{j2\pi\frac{q-1}{n}(m^\prime - m)} + \\
\label{eqn:lem_gabst2_1}
&\quad+ n \sum_{\substack{\ell^\prime=0\\ \ell^\prime \not= \ell}}^{n-1} g_{(1 - \ell)_n}^*g_{(1 - \ell^\prime)_n}
	\stackrel{(a)}{=} n\,g_{(1 - \ell)_n}^* \sum_{\substack{\ell^\prime=0\\ \ell^\prime \not= \ell}}^{n-1} g_{(1 - \ell^\prime)_n}
\end{align}
where $(a)$ in the above expression is a consequence of the fact that $\sum_{m^\prime = 0}^{n-1} \mathrm{e}^{j2\pi\frac{q-1}{n}(m^\prime - m)} = 0$ for any fixed $q \in \{2,\dots,n\}$. Likewise, we can simplify the second term in \eqref{eqn:lem_gfavc} as follows
\begin{align}
\nonumber
 	\ \sum_{\substack{m^\prime=0\\ m^\prime \not= m}}^{n-1} \langle\mathrm{x}_{\ell,m}, \mathrm{x}_{\ell,m^\prime}\rangle &= \sum_{q=1}^{n}  g_{(q - \ell)_n}^*g_{(q - \ell)_n} \sum_{\substack{m^\prime=0\\ m^\prime \not= m}}^{n-1} \mathrm{e}^{j2\pi\frac{q-1}{n}(m^\prime - m)}\\
\nonumber
	&= \sum_{q=2}^{n}  \big|g_{(q - \ell)_n}\!\big|^2 \sum_{\substack{m^\prime=0\\ m^\prime \not= m}}^{n-1} \mathrm{e}^{j2\pi\frac{q-1}{n}(m^\prime - m)} + \big|g_{(1 - \ell)_n}\!\big|^2 \sum_{\substack{m^\prime=0\\ m^\prime \not= m}}^{n-1} 1\\
\label{eqn:lem_gabst2_2}
	&\stackrel{(b)}{=} - \sum_{q=2}^{n} \big|g_{(q - \ell)_n}\!\big|^2 + (n-1) \big|g_{(1 - \ell)_n}\!\big|^2 = - 1 + n\big|g_{(1 - \ell)_n}\!\big|^2 
\end{align}
where $(b)$ follows from the fact that $\sum_{m^\prime \not= m} \mathrm{e}^{j2\pi\frac{q-1}{n}(m^\prime - m)} = -1$ for any fixed $q \in \{2,\dots,n\}$.

To conclude the theorem, note from \eqref{eqn:lem_gfavc}, \eqref{eqn:lem_gabst2_1}, and \eqref{eqn:lem_gabst2_2} that we can write
\begin{align}
\nonumber
 \max_{i \in \{1,\dots,n^2\}} \bigg|\sum_{\substack{j=1\\j\not=i}}^{n^2} \langle\mathrm{x}_i, \mathrm{x}_j\rangle\bigg| &= \max_\ell \bigg|n\,g_{(1 - \ell)_n}^* \sum_{\substack{\ell^\prime=0\\ \ell^\prime \not= \ell}}^{n-1} g_{(1 - \ell^\prime)_n}  - 1 + n\big|g_{(1 - \ell)_n}\!\big|^2\bigg|\\
\nonumber
	&\stackrel{(c)}{\leq} \max_{r \in \{1,\dots,n\}} \bigg|n\,g_r^* \sum_{\substack{s=1\\ s \not= r}}^{n} g_s\bigg| + \max_{r \in \{1,\dots,n\}} \Big|- 1 + n|g_r|^2\Big|\\
\nonumber
	&\leq n \max_{r \in \{1,\dots,n\}} |g_r| \sum_{\substack{s=1\\ s \not= r}}^{n} |g_s| + 1 - n\,g_{\min}^2\\
\label{eqn:lem_gf_fin}
	&\stackrel{(d)}{\leq} n\,g_{\max}(\sqrt{n} - g_{\min}) + 1 - n\,g_{\min}^2.
\end{align}
Here, $(c)$ mainly follows from the triangle inequality and a simple reindexing argument, while $(d)$ mainly follows from the Cauchy--Schwarz inequality since $\sum_{\substack{s=1\\ s \not= r}}^{n} |g_s| = \|g\|_1 - |g_r| \leq \sqrt{n} - g_{\min}$. The proof of the theorem now follows by dividing the above expression by $n^2 -1$.
\end{proof}

In words, Theorem~\ref{thm:gf_avc} states that the average coherence of Gabor frames cannot be too large. In particular, it implies that Gabor frames generated from unimodal (unit-norm) seed vectors (i.e., seed vectors characterized by $g_{\min} \asymp g_{\max} \asymp n^{-1/2}$) satisfy $\nu(X) \precsim n^{-1}$. On the other hand, recall that the Welch bound \cite{welch:tit74} dictates that $\mu(X) \geq (n+1)^{-1/2}$ for Gabor frames. It is therefore easy to conclude from these two facts that Gabor frames generated from unimodal seed vectors are automatically guaranteed to satisfy the coherence property (resp. strong coherence property) as long as $\mu(X) \precsim (\log{p})^{-1/2}$ (resp. $\mu(X) \precsim (\log{p})^{-1}$). In the context of model selection and sparse-signal recovery, Theorem~\ref{thm:gf_avc} therefore suggests that Gabor frames generated from unimodal seed vectors are the best that one can hope for in terms of the average coherence.

Finally, recall from the discussions in Section~\ref{sec:mod_sel} and Section~\ref{sec:sparse_rec} that---among the class of matrices that satisfy the (strong) coherence property---design matrices with small worst-case coherence are particularly well-suited for model selection and sparse-signal recovery. In the context of Gabor frames, the goal then is to design unimodal seed vectors that yield Gabor frames with the smallest-possible worst-case coherence. This, however, is an active area of mathematical research and a number of researchers have looked at this problem in recent years; see, e.g., \cite{strohmer:acha03}. As such, we can simply leverage some of the existing research in this area in order to provide explicit constructions of Gabor frames that satisfy the (strong) coherence property with nearly-optimal worst-case coherence.

Specifically, let $n \geq 5$ be a prime number and construct a unimodal seed vector $g \in \mathbb{C}^n$ as follows
\begin{align}
\label{eqn:alltopvec}
	g = \begin{bmatrix}
	    	\frac{1}{\sqrt{n}}\mathrm{e}^{j2\pi\frac{0^3}{n}} & \frac{1}{\sqrt{n}}\mathrm{e}^{j2\pi\frac{1^3}{n}} & \dots & \frac{1}{\sqrt{n}}\mathrm{e}^{j2\pi\frac{(n-1)^3}{n}} 
	    \end{bmatrix}^\tT.
\end{align}
The sequence $\left\{\frac{1}{\sqrt{n}}\mathrm{e}^{j2\pi\frac{q^3}{n}}\right\}_{q=0}^{n-1}$ is termed as the \emph{Alltop sequence} \cite{alltop:tit80} in the literature. This sequence has the property that its autocorrelation decays very fast and, therefore, it is particularly well-suited for generating Gabor frames with small worst-case coherence. In particular, it was established recently in \cite{strohmer:acha03} that Gabor frames generated from the Alltop seed vector $g$ given in \eqref{eqn:alltopvec} satisfy
\begin{align}
	\mu(X) \doteq \max\limits_{i,j:i \neq j} \big|\langle\mathrm{x}_i, \mathrm{x}_j\rangle\big| \leq \frac{1}{\sqrt{n}}.
\end{align}
In addition, since we have that $g_{\min} = g_{\max} = n^{-1/2}$ for the Alltop seed vector, it is easy to check using Theorem~\ref{thm:gf_avc} that the average coherence of Alltop Gabor frames satisfies $\nu(X) \leq (n+1)^{-1} \leq \mu(X)/\sqrt{n}$. An immediate consequence of this discussion is that all the results reported in Section~\ref{sec:mod_sel} and Section~\ref{sec:sparse_rec} in the context of model selection and sparse-signal recovery using OST apply directly to the case of Alltop Gabor frames. In particular, it follows from Theorem~\ref{thm:sig_rec} that Alltop Gabor frames together with OST are guaranteed to recover most $k$-sparse signals---regardless of the statistical dependence across the nonzero entries of $\beta$---as long as $k \precapprox n$ and $\MAR = \Theta(1)$. In contrast, the only other results available in the sparse-signal recovery literature for Alltop Gabor frames are based on the higher-complexity basis pursuit \cite{donoho:siamjsc98} and require the nonzero entries of $\beta$ to be independent and statistically symmetric around zero for the case when $\sqrt{n} \precsim k \precapprox n$ \cite{pfander:tsp08,herman:tsp09}.

\section{Proofs of Main Results}\label{sec:proofs}
In this section, we provide detailed proofs of the main results reported in Section~\ref{sec:mod_sel} and Section~\ref{sec:sparse_rec}. Before proceeding further, however, it is advantageous to develop some notation that will facilitate our forthcoming analysis. In this regard, recall that the true model $\cS$ is taken to be a uniformly random $k$-subset of $\nN{p} \doteq \{1,\dots,p\}$. We can therefore write the data vector $\beta$ under this assumption as concatenation of a random permutation matrix and a deterministic $k$-sparse vector. Specifically, let $\bar{z} \in \C^p$ be a \emph{deterministic} $k$-sparse vector that we write (without loss of generality) as
\begin{align}
 \bar{z} \doteq \Big(\underbrace{z_1,\dots,z_k}_{\doteq \, z \in \C^k}, \underbrace{0,\dots,0}_{(p-k) \text{ times}}\Big)^\tT
\end{align}
and let $P_\pi$ be a $p \times p$ random permutation matrix; in other words,
\begin{align}
 P_\pi \doteq \begin{bmatrix}
               \textrm{e}_{\pi_1} & \textrm{e}_{\pi_2} & \dots & \textrm{e}_{\pi_p}
              \end{bmatrix}^\tT
\end{align}
where $\textrm{e}_j$ denotes the $j$-th column of the canonical basis $I$ and $\bar{\Pi} \doteq (\pi_1,\dots,\pi_p)$ is a random permutation of $\nN{p}$. Then the assumption that the model $\cS$ is a random subset of $\nN{p}$ is equivalent to stating that the data vector $\beta$ can be written as $\beta = P_\pi \bar{z}$. In other words, the measurement vector $y$ can be expressed as
\begin{align}
 y = X \beta + \eta = X P_\pi \bar{z} + \eta = X_\Pi z + \eta
\end{align}
where $\Pi \doteq (\pi_1,\dots,\pi_k)$ denotes the first $k$ elements of the random permutation $\bar{\Pi}$, $X_\Pi$ denotes the $n \times k$ submatrix obtained by collecting the columns of $X$ corresponding to the indices in $\Pi$, and the vector $z \in \C^k$ represents the $k$ nonzero entries of $\beta$.

\subsubsection{Proof of Theorem~\ref{thm:ems_cp}}
The general road map for the proof of Theorem~\ref{thm:ems_cp} is as follows. Below, we first introduce the notion of $(k,\epsilon,\delta)$-\emph{statistical orthogonality condition} (StOC). We next establish the relationship between the StOC parameters and the worst-case and average coherence of $X$ in Lemma~\ref{lem:stoc1} and Lemma~\ref{lem:stoc2}. We then provide a proof of Theorem~\ref{thm:ems_cp} by first showing that if $X$ satisfies the StOC then OST recovers $\cS$ with high probability and then relating the results of Lemma~\ref{lem:stoc1} and Lemma~\ref{lem:stoc2} to the coherence property.
\begin{definition}[$(k,\epsilon,\delta)$-Statistical Orthogonality Condition]
Let $\bar{\Pi} = (\pi_1,\dots,\pi_p)$ be a random permutation of $\nN{p}$, and define $\Pi \doteq (\pi_1,\dots,\pi_k)$ and $\Pi^c \doteq (\pi_{k+1},\dots,\pi_p)$ for any $k \in \nN{p}$. Then the $n \times p$ (normalized) design matrix $X$ is said to satisfy the $(k,\epsilon,\delta)$-statistical orthogonality condition if there exist $\epsilon, \delta \in [0,1)$ such that the inequalities
\begin{align}
\label{eqn:stoc1}
\tag{StOC-1} \|(X_\Pi^\tH X_\Pi - I)z\|_\infty &\leq \epsilon \|z\|_2\\
\label{eqn:stoc2}
\tag{StOC-2} \|X_{\Pi^c}^\tH X_\Pi z\|_\infty &\leq \epsilon \|z\|_2
\end{align}
hold for every \emph{fixed} $z \in \C^k$ with probability exceeding $1-\delta$ (with respect to the random permutation $\bar{\Pi}$).
\end{definition}
\begin{remark}
Note that the StOC derives its name from the fact that if $X$ is a $p \times p$ orthonormal matrix then it trivially satisfies the StOC for every $k \in \nN{p}$ with $\epsilon = \delta = 0$. In addition, although we will not use this fact explicitly in the paper, it can be checked that if $X$ satisfies $(k,\epsilon,\delta)$-StoC then it \emph{approximately} preserves the $\ell_2$-norms of $k$-sparse signals with probability exceeding $1-\delta$ as long as $k < \epsilon^{-2}$.
\end{remark}
Having defined StOC, our goal in the next two lemmas is to relate the StOC parameters $k, \epsilon$, and $\delta$ to the worst-case and average coherence of the design matrix $X$.
\begin{lemma}\label{lem:stoc1}
Let $\Pi = (\pi_1,\dots,\pi_k)$ denote the first $k$ elements of a random permutation of $\nN{p}$ and choose a parameter $a \geq 1$. Then, for any fixed $z \in \C^k$, $\epsilon \in [0,1)$, and $k \leq \min\big\{\epsilon^2\nu^{-2},(1+a)^{-1}p\big\}$, we have 
\begin{align}
 \Pr\Big(\big\{\text{$X$ \emph{does not satisfy} \eqref{eqn:stoc1}}\big\}\Big) \leq 4k\exp\bigg(-\frac{(\epsilon-\sqrt{k}\,\nu)^2}{16(2+ a^{-1})^2\mu^2}\bigg).
\end{align}
\end{lemma}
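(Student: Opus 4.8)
The plan is to bound $\|(X_\Pi^\tH X_\Pi - I)z\|_\infty$ entrywise and take a union bound over the $k$ coordinates. Fix $z \in \C^k$ and a coordinate $\ell \in \{1,\dots,k\}$; the $\ell$-th entry of $(X_\Pi^\tH X_\Pi - I)z$ is $\sum_{j \neq \ell} \langle \mathrm{x}_{\pi_\ell}, \mathrm{x}_{\pi_j}\rangle z_j$, a sum over the randomly chosen column indices. The key observation is that, conditioned on the value of $\pi_\ell$, the remaining indices $\pi_j$ ($j \neq \ell$) form a uniformly random ordered $(k-1)$-subset of $\nN{p} \setminus \{\pi_\ell\}$, so this conditional sum is a sampling-without-replacement sum of the numbers $\{\langle \mathrm{x}_{\pi_\ell}, \mathrm{x}_r \rangle z_j\}$. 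I would first center this sum: its conditional expectation (over the choice of the other indices, for fixed $\pi_\ell$) equals $\frac{k-1}{p-1}\sum_{r \neq \pi_\ell}\langle \mathrm{x}_{\pi_\ell}, \mathrm{x}_r\rangle \cdot (\text{average of the } z_j)$, whose magnitude is controlled by the average coherence: $\big|\sum_{r\neq\pi_\ell}\langle\mathrm{x}_{\pi_\ell},\mathrm{x}_r\rangle\big| \le (p-1)\nu$, giving a mean term of size at most $\sqrt{k}\,\nu\,\|z\|_2$ after a Cauchy--Schwarz bound on the $z_j$ average (this is where the $\sqrt{k}\,\nu$ in the exponent and the requirement $k \le \epsilon^2\nu^{-2}$ come from — we need $\epsilon - \sqrt{k}\,\nu > 0$).

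For the fluctuation of the sum around its conditional mean, I would invoke a concentration inequality for sampling without replacement (a Hoeffding/Bernstein-type bound, e.g.\ via Serfling's reduction to the with-replacement case or a bounded-differences/Azuma argument on the sequential reveal of $\pi_1,\dots,\pi_k$). Each summand $\langle\mathrm{x}_{\pi_\ell},\mathrm{x}_{\pi_j}\rangle z_j$ has magnitude at most $\mu|z_j|$, so the sum of squared ranges is on the order of $\mu^2\|z\|_2^2$; the martingale differences when revealing $\pi_j$ one at a time are also of order $\mu|z_j|$ (with an extra factor absorbing the fact that we are sampling without replacement and that we work with complex values — splitting into real and imaginary parts costs a constant). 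This yields a sub-Gaussian tail of the form $\exp\!\big(-c(\epsilon-\sqrt k\,\nu)^2/(\mu^2\|z\|_2^2)\big)$ after setting the deviation level so that mean plus fluctuation is at most $\epsilon\|z\|_2$; tracking the constants (the factor $16(2+a^{-1})^2$, where the $a$-dependence enters through the $k \le (1+a)^{-1}p$ constraint needed to keep the without-replacement correction factor $\frac{p-k}{p-1} \ge \frac{a}{1+a}$ bounded away from zero in the denominator of the bound) is the bookkeeping part.

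The union bound over $\ell \in \{1,\dots,k\}$ contributes the factor $k$, and the split into real/imaginary parts (or a slightly lossy argument handling the complex modulus directly) contributes another factor of roughly $2$ or $4$ in front and a constant inside the exponent — matching the stated $4k\exp(\cdots)$. The main obstacle I anticipate is getting the concentration-without-replacement step clean with the right constants: one must carefully condition on $\pi_\ell$, correctly identify the conditional mean and the effective variance proxy, and apply the finite-population correction so that the final exponent has exactly the advertised $(\epsilon - \sqrt k\,\nu)^2/(16(2+a^{-1})^2\mu^2)$ form. The handling of complex-valued inner products (real/imaginary decomposition) and the interplay between the $\ell_\infty$-over-coordinates union bound and the per-coordinate sub-Gaussian tail are the secondary technical points, but the heart of the argument is the sampling-without-replacement concentration estimate.
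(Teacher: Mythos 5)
Your proposal follows essentially the same route as the paper's proof: fix a coordinate, condition on the realized value of $\pi_i$, bound the conditional mean by $\sqrt{k}\,\nu\,\|z\|_2$ via the average coherence, control the fluctuation by an Azuma/bounded-differences (Doob martingale) argument on the sequential reveal of the remaining indices with differences of order $\mu|z_j|$ (the constraint $k \leq (1+a)^{-1}p$ taming the without-replacement correction terms and yielding the $(2+a^{-1})^2$ factor), and finish with a union bound over the $k$ coordinates plus the real/imaginary split that produces the leading factor $4$. This matches the paper's argument, so no further comparison is needed.
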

\begin{proof}
The proof of this lemma relies heavily on the so-called \emph{method of bounded differences} (MOBD) \cite{mcdiarmid:89}. Specifically, we begin by noting that $\big\|(X_\Pi^\tH X_\Pi - I)z\big\|_\infty = \max\limits_i \bigg|\sum\limits_{j\not=i} z_j \langle\mathrm{x}_{\pi_i},\mathrm{x}_{\pi_j}\rangle\bigg|$. Therefore for a fixed index $i$, and conditioned on the event $\cA_{i^\prime} \doteq \big\{\pi_i = i^\prime\big\}$, we have the following equality from basic probability theory
\begin{align}
\label{eqn:lem_stoc1_prob}
 \Pr\bigg(\big|\sum\limits_{\substack{j=1\\j\not=i}}^k z_j \langle\mathrm{x}_{\pi_i},\mathrm{x}_{\pi_j}\rangle\big| > \epsilon\|z\|_2\bigg|\cA_{i^\prime}\bigg) = \Pr\bigg(\big|\sum\limits_{\substack{j=1\\j\not=i}}^k z_j \langle\mathrm{x}_{i^\prime},\mathrm{x}_{\pi_j}\rangle\big| > \epsilon\|z\|_2\bigg|\cA_{i^\prime}\bigg).
\end{align}

Next, in order to apply the MOBD to obtain an upper bound for \eqref{eqn:lem_stoc1_prob}, we first define a random $(k-1)$-tuple $\Pi^{-i} \doteq (\pi_1,\dots,\pi_{i-1},\pi_{i+1},\dots,\pi_k)$ and then construct a Doob martingale $(M_0,M_1,\dots,M_{k-1})$ as follows:
\begin{align}
 M_0 = \E\Big[\sum\limits_{\substack{j=1\\j\not=i}}^k z_j \langle\mathrm{x}_{i^\prime},\mathrm{x}_{\pi_j}\rangle\Big|\cA_{i^\prime}\Big] \quad \text{and} \quad 
 M_\ell = \E\Big[\sum\limits_{\substack{j=1\\j\not=i}}^k z_j \langle\mathrm{x}_{i^\prime},\mathrm{x}_{\pi_j}\rangle\Big|\pi^{-i}_{1\rightarrow\ell},\cA_{i^\prime}\Big], \ \ell=1,\dots,k-1
\end{align}
where $\pi^{-i}_{1\rightarrow\ell}$ denotes the first $\ell$ elements of $\Pi^{-i}$. The first thing to note here is that we have from the linearity of (conditional) expectation
\begin{align}
\nonumber
 \big|M_0\big| &= \Big|\sum_{j\not=i} z_j \E\big[\langle\mathrm{x}_{i^\prime},\mathrm{x}_{\pi_j}\rangle|\cA_{i^\prime}\big]\Big| \leq \sum_{j\not=i} \big|z_j\big| \Big|\E\big[\langle\mathrm{x}_{i^\prime},\mathrm{x}_{\pi_j}\rangle|\cA_{i^\prime}\big]\Big| \displaybreak[0]\\
\label{eqn:lem_stoc1_M0}
&\stackrel{(a)}{\leq} \sum_{j\not=i} \big|z_j\big| \Bigg|\sum\limits_{\substack{q=1\\q\not=i^\prime}}^p\frac{1}{p-1} \langle\mathrm{x}_{i^\prime},\mathrm{x}_{q}\rangle\Bigg| \stackrel{(b)}{\leq} \nu\,\|z\|_1 \leq \sqrt{k}\,\nu\,\|z\|_2
\end{align}
where $(a)$ follows from the fact that, conditioned on $\cA_{i^\prime}$, $\pi_j$ has a uniform distribution over $\nN{p} - \{i^\prime\}$, while $(b)$ is mainly a consequence of the definition of average coherence. In addition, if we use $\pi^{-i}_\ell$ to denote the $\ell$-th element of $\Pi^{-i}$ and define
\begin{align}
	M_\ell(r) \doteq \E\Big[\sum\limits_{\substack{j=1\\j\not=i}}^k z_j 	\langle\mathrm{x}_{i^\prime},\mathrm{x}_{\pi_j}\rangle\Big|\pi^{-i}_{1\rightarrow\ell-1},\pi^{-i}_\ell=r,\cA_{i^\prime}\Big], \ \ell=1,\dots,k-1
\end{align}
then, since $(M_0,M_1,\dots,M_{k-1})$ is a Doob martingale, it can be easily verified that $|M_\ell - M_{\ell-1}|$ is upper bounded by $\sup_{r,s} \big[M_\ell(r) - M_\ell(s)\big]$ (see, e.g., \cite{motwani:95}).

Now in order to obtain an upper bound for $\sup_{r,s} \big[M_\ell(r) - M_\ell(s)\big]$, notice that
\begin{align}
\nonumber
	\Big|M_\ell(r) - M_\ell(s)\Big| &= \Bigg|\sum_{j\not=i} z_j 
\bigg(\E\Big[\langle\mathrm{x}_{i^\prime},\mathrm{x}_{\pi_j}\rangle\Big|\pi^{-i}_{1\rightarrow\ell-1},\pi^{-i}_\ell=r,\cA_{i^\prime}\Big] - \E\Big[\langle\mathrm{x}_{i^\prime},\mathrm{x}_{\pi_j}\rangle\Big|\pi^{-i}_{1\rightarrow\ell-1},\pi^{-i}_\ell=s,\cA_{i^\prime}\Big]\bigg)\Bigg|\\
\nonumber
&\leq \sum_{j\not=i} \big|z_j\big| \bigg|\underbrace{\E\Big[\langle\mathrm{x}_{i^\prime},\mathrm{x}_{\pi_j}\rangle\Big|\pi^{-i}_{1\rightarrow\ell-1},\pi^{-i}_\ell=r,\cA_{i^\prime}\Big] - \E\Big[\langle\mathrm{x}_{i^\prime},\mathrm{x}_{\pi_j}\rangle\Big|\pi^{-i}_{1\rightarrow\ell-1},\pi^{-i}_\ell=s,\cA_{i^\prime}\Big]}_{\doteq\, d_{\ell,j}}\bigg|\\
&= \sum_{\substack{j \leq\,\ell+1\\j\not=i}}\big|z_j\big|\big|d_{\ell,j}\big| + \sum_{\substack{j >\,\ell+1\\j\not=i}}\big|z_j\big|\big|d_{\ell,j}\big|.
\end{align}
In addition, we have that for every $j > \ell+1, j \not= i$, the random variable $\pi_j$ has a uniform distribution over $\nN{p} - \{\pi^{-i}_{1\rightarrow\ell-1}, r, i^\prime\}$ when conditioned on $\{\pi^{-i}_{1\rightarrow\ell-1}, \pi^{-i}_\ell = r, i^\prime\}$, whereas $\pi_j$ has a uniform distribution over $\nN{p} - \{\pi^{-i}_{1\rightarrow\ell-1}, s, i^\prime\}$ when conditioned on $\{\pi^{-i}_{1\rightarrow\ell-1}, \pi^{-i}_\ell = s, i^\prime\}$. Therefore, we obtain
\begin{align}
	|d_{\ell,j}| = \frac{1}{p-\ell-1}\Big|\langle\mathrm{x}_{i^\prime},\mathrm{x}_{r}\rangle - \langle\mathrm{x}_{i^\prime},\mathrm{x}_{s}\rangle\Big| \leq \frac{2 \mu}{p-\ell-1}, \ \forall~j > \ell+1, j \not= i.
\end{align}
Similarly, it can be shown that $\sum_{\substack{j \leq\,\ell+1\\j\not=i}}\big|z_j\big|\big|d_{\ell,j}\big| \leq \big|z_{\ell+1}\big| 2 \mu$ when $i \leq \ell$, $\sum_{\substack{j \leq\,\ell+1\\j\not=i}}\big|z_j\big|\big|d_{\ell,j}\big| \leq \big|z_{\ell}\big| 2 \mu$ when $i = \ell+1$, and $\sum_{\substack{j \leq\,\ell+1\\j\not=i}}\big|z_j\big|\big|d_{\ell,j}\big| \leq (|z_{\ell}| + \frac{|z_{\ell+1}|}{p-\ell-1})2 \mu$ when $i > \ell+1$. Consequently, regardless of the initial choice of $i$, we conclude that
\begin{align}
\label{eqn:lem_stoc1_d_ell}
 \sup_{r,s} \big[M_\ell(r) - M_\ell(s)\big] \leq 2\mu\Big(\underbrace{|z_\ell| + |z_{\ell+1}| + \frac{1}{p-\ell-1}\sum_{j >\,\ell+1}|z_j|}_{\doteq\,d_\ell}\Big).
\end{align}

We have now established that $(M_0,M_1,\dots,M_{k-1})$ is a (real- or complex-valued) bounded-difference martingale sequence with $|M_\ell - M_{\ell-1}| \leq 2 \mu d_\ell$ for $\ell=1,\dots,k-1$. Therefore, under the assumption that $k \leq \epsilon^2\nu^{-2}$ and since it has been established in \eqref{eqn:lem_stoc1_M0} that $|M_0| \leq \sqrt{k}\,\nu\,\|z\|_2$, it is easy to see that
\begin{align}
\nonumber
 \Pr\bigg(\big|\sum\limits_{\substack{j=1\\j\not=i}}^k z_j \langle\mathrm{x}_{i^\prime},\mathrm{x}_{\pi_j}\rangle\big| > \epsilon\|z\|_2\bigg|\cA_{i^\prime}\bigg) &\leq \Pr\bigg(\big|M_{k-1} - M_0| > \epsilon\|z\|_2 - \sqrt{k}\,\nu\,\|z\|_2\bigg|\cA_{i^\prime}\bigg)\\
\label{eqn:lem_stoc1_finalbd}
&\stackrel{(c)}{\leq} 4\exp\Bigg(-\frac{(\epsilon-\sqrt{k}\,\nu)^2\|z\|_2^2}{16\mu^2\sum\limits_{\ell=1}^{k-1}d_\ell^2}\Bigg)
\end{align}
where $(c)$ follows from the complex Azuma inequality for bounded-difference martingale sequences (see Lemma~\ref{lem:az_ineq} in Appendix~\ref{app:conc_ineq}). Further, it can be established through routine calculations from \eqref{eqn:lem_stoc1_d_ell} that $\sum_{\ell=1}^{k-1}d_\ell^2 \leq (2 + a^{-1})^2 \|z\|_2^2$ since $k \leq p/(1+a)$. Combining all these facts together, we finally obtain
\begin{align}
\nonumber
 \Pr\bigg(\big\|(X_\Pi^\tH X_\Pi - I)z\big\|_\infty > \epsilon\|z\|_2\bigg) &\stackrel{(d)}{\leq} k\,\Pr\bigg(\big|\sum\limits_{\substack{j=1\\j\not=i}}^k z_j \langle\mathrm{x}_{\pi_i},\mathrm{x}_{\pi_j}\rangle\big| > \epsilon\|z\|_2\bigg)\\
\nonumber
	&= k\,\sum_{i^\prime=1}^{p} \Pr\bigg(\big|\sum\limits_{\substack{j=1\\j\not=i}}^k z_j \langle\mathrm{x}_{i^\prime},\mathrm{x}_{\pi_j}\rangle\big| > \epsilon\|z\|_2\bigg|\cA_{i^\prime}\bigg) \Pr\left(\cA_{i^\prime}\right)\\
	&\stackrel{(e)}{\leq} 4k\exp\left(-\frac{(\epsilon-\sqrt{k}\,\nu)^2}{16(2+ a^{-1})^2\mu^2}\right)
\end{align}
where $(d)$ follows from the union bound and the fact that the $\pi_i$'s are identically (though not independently) distributed, while $(e)$ follows from \eqref{eqn:lem_stoc1_finalbd} and the fact that $\pi_i$ has a uniform distribution over $\nN{p}$.
\end{proof}
\begin{lemma}\label{lem:stoc2}
Let $\Pi = (\pi_1,\dots,\pi_k)$ and $\Pi^c = (\pi_{k+1},\dots,\pi_p)$ denote the first $k$ and the last $(p-k)$ elements of a random permutation of $\nN{p}$, respectively, and choose a parameter $a \geq 1$. Then, for any fixed $z \in \C^k$, $\epsilon \in [0,1)$, and $k \leq \min\big\{\epsilon^2\nu^{-2},(1+a)^{-1}p\big\}$, we have 
\begin{align}
 \Pr\Big(\big\{\text{$X$ \emph{does not satisfy} \eqref{eqn:stoc2}}\big\}\Big) \leq 4(p-k)\exp\bigg(-\frac{(\epsilon-\sqrt{k}\,\nu)^2}{8(1+a^{-1})^2\mu^2}\bigg).
\end{align} 
\end{lemma}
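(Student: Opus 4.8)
The plan is to run the argument of Lemma~\ref{lem:stoc1} almost verbatim for the cross terms; the one structural difference is that the ``probe'' column index now lies \emph{outside} the support $\Pi$, which in fact cleans up the combinatorics. Writing $\|X_{\Pi^c}^\tH X_\Pi z\|_\infty = \max_{i>k}\big|\sum_{j=1}^k z_j\langle\mathrm{x}_{\pi_i},\mathrm{x}_{\pi_j}\rangle\big|$, I would first fix an index $i\in\{k+1,\dots,p\}$ and condition on the event $\cA_{i^\prime}\doteq\{\pi_i=i^\prime\}$. Given $\cA_{i^\prime}$, the tuple $(\pi_1,\dots,\pi_k)$ is a uniformly random ordered $k$-subset of $\nN{p}\setminus\{i^\prime\}$, so it suffices to bound $\Pr\big(\big|\sum_{j\le k} z_j\langle\mathrm{x}_{i^\prime},\mathrm{x}_{\pi_j}\rangle\big|>\epsilon\|z\|_2\mid\cA_{i^\prime}\big)$ by the method of bounded differences, uniformly in $i^\prime$.

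To that end, I would build the Doob martingale $(M_0,M_1,\dots,M_k)$ obtained by revealing $\pi_1,\dots,\pi_k$ one coordinate at a time (conditioned throughout on $\cA_{i^\prime}$). The mean term is handled exactly as in the derivation of \eqref{eqn:lem_stoc1_M0}: since each $\pi_j$ is uniform over $\nN{p}\setminus\{i^\prime\}$ given $\cA_{i^\prime}$, linearity of conditional expectation and the definition of average coherence give $|M_0|\le\nu\,\|z\|_1\le\sqrt{k}\,\nu\,\|z\|_2$. For the increments, the same reindexing-and-uniformity reasoning that produced \eqref{eqn:lem_stoc1_d_ell}, together with $|\langle\mathrm{x}_{i^\prime},\mathrm{x}_r\rangle-\langle\mathrm{x}_{i^\prime},\mathrm{x}_s\rangle|\le 2\mu$, yields
\begin{align*}
|M_\ell-M_{\ell-1}|\le 2\mu\Big(\underbrace{|z_\ell|+\tfrac{1}{p-\ell-1}\textstyle\sum_{j>\ell}|z_j|}_{\doteq\,d_\ell^\prime}\Big),\qquad \ell=1,\dots,k.
\end{align*}
The gain over Lemma~\ref{lem:stoc1} is that, because $i\notin\{1,\dots,k\}$, revealing $\pi_\ell$ is never the same as revealing the probe index, so the extra $|z_{\ell+1}|$ summand present in the Lemma~\ref{lem:stoc1} bound simply does not appear; in particular $d_\ell^\prime$ is pointwise no larger than the corresponding $d_\ell$ there.

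Then I would invoke the complex Azuma inequality (Lemma~\ref{lem:az_ineq} in Appendix~\ref{app:conc_ineq}): under the standing hypothesis $k\le\epsilon^2\nu^{-2}$ (which makes $\epsilon-\sqrt{k}\,\nu\ge 0$), combining $|M_0|\le\sqrt{k}\,\nu\,\|z\|_2$ with the increment bounds gives, for every $i^\prime$,
\begin{align*}
\Pr\Big(\big|\!\textstyle\sum_{j\le k} z_j\langle\mathrm{x}_{i^\prime},\mathrm{x}_{\pi_j}\rangle\big|>\epsilon\|z\|_2 \ \Big|\ \cA_{i^\prime}\Big)\ \le\ 4\exp\!\left(-\frac{(\epsilon-\sqrt{k}\,\nu)^2\|z\|_2^2}{16\,\mu^2\sum_{\ell=1}^{k}(d_\ell^\prime)^2}\right).
\end{align*}
A routine computation — and this is the only place the hypothesis $k\le(1+a)^{-1}p$ is used — bounds the tail-mass term by $\tfrac{1}{p-\ell-1}\sum_{j>\ell}|z_j|\le\tfrac{\sqrt{k}\,\|z\|_2}{p-k-1}$ and, after a Cauchy--Schwarz-style split, yields $\sum_{\ell=1}^{k}(d_\ell^\prime)^2\le\tfrac12(1+a^{-1})^2\|z\|_2^2$ (the more favorable constant than in Lemma~\ref{lem:stoc1} being precisely due to the missing $|z_{\ell+1}|$ term); this turns the exponent into $-(\epsilon-\sqrt{k}\,\nu)^2/\big(8(1+a^{-1})^2\mu^2\big)$. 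Finally, averaging this conditional bound over $i^\prime$ against the uniform law of $\pi_i$ (it is independent of $i^\prime$), and then taking a union bound over the $p-k$ indices $i\in\{k+1,\dots,p\}$ — exactly the steps $(d)$--$(e)$ at the end of the proof of Lemma~\ref{lem:stoc1} — produces $\Pr\big(X\text{ violates }\eqref{eqn:stoc2}\big)\le 4(p-k)\exp\big(-(\epsilon-\sqrt{k}\,\nu)^2/(8(1+a^{-1})^2\mu^2)\big)$, as claimed.

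I do not anticipate any genuine obstacle: the proof is a transcription of Lemma~\ref{lem:stoc1} with a strictly simpler index structure. The two places that demand care are (i) verifying the increment bound $d_\ell^\prime$ — specifically, checking that conditioned on $\pi_{1\to\ell-1}$ and $\cA_{i^\prime}$ each undetermined $\pi_j$ ($j>\ell$) is uniform over a set differing by a single element between $\pi_\ell=r$ and $\pi_\ell=s$, which is what delivers the $\tfrac{2\mu}{p-\ell-1}$ per-term bound — and (ii) the bookkeeping that converts $\sum_\ell(d_\ell^\prime)^2$ into the constant $\tfrac12(1+a^{-1})^2$ via $k\le(1+a)^{-1}p$.
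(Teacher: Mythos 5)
Your proposal reproduces the paper's own argument essentially step for step: the same conditioning on $\cA_{i^\prime}$, the same Doob martingale obtained by revealing $\pi_1,\dots,\pi_k$, the same bounds $|M_0|\le\sqrt{k}\,\nu\,\|z\|_2$ and $|M_\ell-M_{\ell-1}|\le 2\mu d_\ell^\prime$ with $d_\ell^\prime=|z_\ell|+\tfrac{1}{p-\ell-1}\sum_{j>\ell}|z_j|$, and the same average-over-$i^\prime$ plus union-bound finish; your observation that the probe index lying outside $\Pi$ eliminates the three-way case analysis of Lemma~\ref{lem:stoc1} is exactly what happens in the paper.

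The one step that does not go through is the claim $\sum_{\ell=1}^{k}(d_\ell^\prime)^2\le\tfrac12(1+a^{-1})^2\|z\|_2^2$. It is false in general: take $z=(\|z\|_2,0,\dots,0)$, so that $d_1^\prime=\|z\|_2$ and $d_\ell^\prime=0$ for $\ell\ge2$, hence $\sum_\ell(d_\ell^\prime)^2=\|z\|_2^2$, which exceeds $\tfrac12(1+a^{-1})^2\|z\|_2^2$ for every $a>1+\sqrt{2}$. Indeed, no amount of bookkeeping can produce the factor $\tfrac12$, since $\sum_\ell(d_\ell^\prime)^2\ge\sum_\ell|z_\ell|^2=\|z\|_2^2$ always; the absent $|z_{\ell+1}|$ term buys you $(1+a^{-1})^2$ in place of the $(2+a^{-1})^2$ of Lemma~\ref{lem:stoc1}, not an extra $\tfrac12$. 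What the hypotheses actually give (Minkowski in $\ell_2$ together with $\tfrac{1}{p-\ell-1}\sum_{j>\ell}|z_j|\le\tfrac{\sqrt{k}\,\|z\|_2}{p-k-1}$ and $k\le(1+a)^{-1}p$) is $\sum_\ell(d_\ell^\prime)^2\le(1+a^{-1})^2\|z\|_2^2$, and feeding that into the complex Azuma bound $4\exp\big(-\tilde\epsilon^{\,2}/(16\mu^2\sum_\ell(d_\ell^\prime)^2)\big)$ --- the normalization used in \eqref{eqn:lem_stoc1_finalbd} --- yields $16(1+a^{-1})^2\mu^2$ in the denominator of the exponent rather than the stated $8(1+a^{-1})^2\mu^2$. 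You should be aware that the paper's own proof contains exactly this factor-of-two slip: it asserts the (correct) bound $\sum_\ell d_\ell^2\le(1+a^{-1})^2\|z\|_2^2$ and nonetheless writes $8$ in the exponent. So your reconstruction is faithful to the source, but as a proof of the lemma \emph{as stated} the constant $8$ is not justified; only $16$ is. The discrepancy is immaterial downstream, where the lemma is invoked with generous absolute constants.
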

\begin{proof}
The proof of this lemma is very similar to that of Lemma~\ref{lem:stoc1} and also relies on the MOBD. To begin with, we note that $\big\|X_{\Pi^c}^\tH X_\Pi z\big\|_\infty = \max\limits_{i\in\nN{p-k}} \bigg|\sum\limits_{j} z_j \langle\mathrm{x}_{\pi^c_i},\mathrm{x}_{\pi_j}\rangle\bigg|$, where $\nN{p-k} \doteq \{1,\dots,p-k\}$ and $\pi^c_i$ denotes the $i$-th element of $\Pi^c$. Then for a fixed index $i \in \nN{p-k}$, and conditioned on the event $\cA_{i^\prime} \doteq \{\pi^c_i = i^\prime\}$, we again have the following equality
\begin{align}
\label{eqn:lem_stoc2_prob}
 \Pr\bigg(\big|\sum\limits_{j=1}^k z_j \langle\mathrm{x}_{\pi^c_i},\mathrm{x}_{\pi_j}\rangle\big| > \epsilon\|z\|_2\bigg|\cA_{i^\prime}\bigg) = \Pr\bigg(\big|\sum\limits_{j=1}^k z_j \langle\mathrm{x}_{i^\prime},\mathrm{x}_{\pi_j}\rangle\big| > \epsilon\|z\|_2\bigg|\cA_{i^\prime}\bigg).
\end{align}
Next, as in the case of Lemma~\ref{lem:stoc1}, we construct a Doob martingale sequence $(M_0,M_1,\dots,M_k)$ as follows:
\begin{align}
 M_0 = \E\Big[\sum\limits_{j=1}^k z_j \langle\mathrm{x}_{i^\prime},\mathrm{x}_{\pi_j}\rangle\Big|\cA_{i^\prime}\Big] \quad \text{and} \quad 
 M_\ell = \E\Big[\sum\limits_{j=1}^k z_j \langle\mathrm{x}_{i^\prime},\mathrm{x}_{\pi_j}\rangle\Big|\pi_{1\rightarrow\ell},\cA_{i^\prime}\Big], \ \ell=1,\dots,k
\end{align}
where $\pi_{1\rightarrow\ell}$ now denotes the first $\ell$ elements of $\Pi$. Then, since $\pi_j$ has a uniform distribution over $\nN{p} - \{i^\prime\}$ when conditioned on $\cA_{i^\prime}$, we once again have the bound $\big|M_0\big| \leq \sqrt{k}\,\nu\,\|z\|_2$. Therefore, the only remaining thing that we need to show in order to be able to apply the complex Azuma inequality to the constructed martingale $(M_0,M_1,\dots,M_k)$ is that $|M_\ell - M_{\ell-1}|$ is suitably bounded.

In this regard, we once again define $M_\ell(r) \doteq \E\Big[\sum\limits_{j=1}^k z_j \langle\mathrm{x}_{i^\prime},\mathrm{x}_{\pi_j}\rangle\Big|\pi_{1\rightarrow\ell-1}, \pi_\ell=r,\cA_{i^\prime}\Big]$ and note that
\begin{align}
\nonumber
	\Big|M_\ell(r) - M_\ell(s)\Big| &= \Bigg|\sum_{j} z_j 
\bigg(\E\Big[\langle\mathrm{x}_{i^\prime},\mathrm{x}_{\pi_j}\rangle\Big|\pi_{1\rightarrow\ell-1},\pi_\ell=r,\cA_{i^\prime}\Big] - \E\Big[\langle\mathrm{x}_{i^\prime},\mathrm{x}_{\pi_j}\rangle\Big|\pi_{1\rightarrow\ell-1},\pi_\ell=s,\cA_{i^\prime}\Big]\bigg)\Bigg|\\
&\leq \big|z_\ell\big|\Big|\langle\mathrm{x}_{i^\prime},\mathrm{x}_{r}\rangle - \langle\mathrm{x}_{i^\prime},\mathrm{x}_{s}\rangle\Big| + \frac{\Big|\langle\mathrm{x}_{i^\prime},\mathrm{x}_{r}\rangle - \langle\mathrm{x}_{i^\prime},\mathrm{x}_{s}\rangle\Big|}{p-\ell-1}\sum_{j > \ell}\big|z_j\big| \leq 2\mu\Big(\underbrace{|z_\ell| + \frac{\sum_{j > \ell}\big|z_j\big|}{p-\ell-1}}_{\doteq\,d_\ell}\Big)
\end{align}
which implies that $\sup_{r,s} \big[M_\ell(r) - M_\ell(s)\big] \leq 2 \mu d_\ell, \ \ell=1,\dots,k$. Consequently, we have now established that $(M_0,M_1,\dots,M_k)$ is a bounded-difference martingale with $|M_\ell - M_{\ell-1}| \leq 2 \mu d_\ell$. Therefore, since $k \leq \epsilon^2\nu^{-2}$ and $|M_0| \leq \sqrt{k}\,\nu\,\|z\|_2$, we once again have from the complex Azuma inequality that
\begin{align}
\nonumber
 \Pr\bigg(\big|\sum\limits_{j=1}^k z_j \langle\mathrm{x}_{i^\prime},\mathrm{x}_{\pi_j}\rangle\big| > \epsilon\|z\|_2\bigg|\cA_{i^\prime}\bigg) &\leq \Pr\bigg(\big|M_k - M_0| > \epsilon\|z\|_2 - \sqrt{k}\,\nu\,\|z\|_2\bigg|\cA_{i^\prime}\bigg)\\
\label{eqn:lem_stoc2_finalbd}
&\stackrel{(a)}{\leq} 4\exp\left(-\frac{(\epsilon-\sqrt{k}\,\nu)^2}{8(1+a^{-1})^2\mu^2}\right)
\end{align}
where $(a)$ follows from the fact that $\sum_{\ell=1}^{k}d_\ell^2 \leq (1+a^{-1})^2\|z\|_2^2$ since $k \leq p/(1+a)$. Combining all these facts together, we finally obtain the claimed result as follows
\begin{align}
\nonumber
 \Pr\bigg(\big\|X_{\Pi^c}^\tH X_\Pi z\big\|_\infty > \epsilon\|z\|_2\bigg) &\stackrel{(b)}{\leq} (p-k)\,\Pr\bigg(\big|\sum\limits_{j=1}^k z_j \langle\mathrm{x}_{\pi^c_i},\mathrm{x}_{\pi_j}\rangle\big| > \epsilon\|z\|_2\bigg)\\
 \nonumber
	&\leq (p-k)\,\sum_{i^\prime=1}^p \Pr\bigg(\big|\sum\limits_{j=1}^k z_j \langle\mathrm{x}_{i^\prime},\mathrm{x}_{\pi_j}\rangle\big| > \epsilon\|z\|_2\bigg|\cA_{i^\prime}\bigg) \Pr\left(\cA_{i^\prime}\right)\\
	&\stackrel{(c)}{\leq} 4(p-k)\exp\left(-\frac{(\epsilon-\sqrt{k}\,\nu)^2}{8(1+a^{-1})^2\mu^2}\right)
\end{align}
where $(b)$ follows from the union bound and the fact that the $\pi^c_i$'s are identically (though not independently) distributed, while $(c)$ follows from \eqref{eqn:lem_stoc2_finalbd} and the fact that $\pi^c_i$ has a uniform distribution over $\nN{p}$.
\end{proof}

Note that Lemma~\ref{lem:stoc1} and Lemma~\ref{lem:stoc2} collectively prove through a simple union bound argument that an $n \times p$ design matrix $X$ satisfies $(k,\epsilon,\delta)$-StOC for any $\epsilon \in [0,1)$ with $\delta \leq 4p\exp\left(-\frac{(\epsilon-\sqrt{k}\,\nu)^2}{16(2+ a^{-1})^2\mu^2}\right)$ for any $a \geq 1$ as long as $k \leq \min\big\{\epsilon^2\nu^{-2},(1+a)^{-1}p\big\}$. We are now ready to provide a proof of Theorem~\ref{thm:ems_cp}.
\begin{proof}[Proof (Theorem~\ref{thm:ems_cp})]
We begin by making use of the notation developed at the start of this section and writing the signal proxy $f = X^\tH y$ as $f = X^\tH X_\Pi z + X^\tH \eta$. Now, let $\Pi^c = (\pi_{k+1},\dots,\pi_p)$ denote the last $(p-k)$ elements of $\bar{\Pi}$ and note that we need to show that $\|f_{\Pi^c}\|_\infty \leq \lambda$ and $\min\limits_{i\in\{1,\dots,k\}}|f_{\pi_i}| > \lambda$ in order to establish that $\whcS = \cS$.

In this regard, we first assume that $X$ satisfies $(k,\epsilon,\delta)$-StOC and define $\lambda_\epsilon \doteq \max\Big\{\frac{1}{t}\epsilon\|z\|_2,\frac{1}{1-t}2\sqrt{\sigma^2\log{p}}\Big\}$ for any $t \in (0,1)$. Next, it can be verified through Lemma~\ref{lem:gaussian} in Appendix~\ref{app:conc_ineq} that $\teta \doteq X^H\eta$ satisfies $\|\teta\|_\infty \leq 2\sqrt{\sigma^2\log{p}}$ with probability exceeding $1 - 2(\sqrt{2\pi\log{p}} \cdot p)^{-1}$. Now define the event
\begin{align}
\label{eqn:thm_ems_G}
  \cG \doteq \bigg\{\Big\{\text{$X$ satisfies \eqref{eqn:stoc1} and \eqref{eqn:stoc2}}\Big\}\bigcap\Big\{\|\teta\|_\infty \leq 2\sqrt{\sigma^2\log{p}}\Big\}\bigg\}
\end{align}
and notice that we trivially have $\Pr(\cG) > 1 - \delta - 2(\sqrt{2\pi\log{p}} \cdot p)^{-1}$. Further, conditioned on the event $\cG$, we have
\begin{align}
\nonumber
 \|f_{\Pi^c}\|_\infty &\stackrel{(a)}{\leq} \|X_{\Pi^c}^\tH X_\Pi z\|_\infty + \|X_{\Pi^c}^\tH \eta\|_\infty\\
\label{eqn:thm_ems_Scomp}
 &\stackrel{(b)}{\leq} \epsilon\|z\|_2 + 2\sqrt{\sigma^2\log{p}} \ \stackrel{(c)}{\leq} \lambda_\epsilon
\end{align}
where $(a)$ follows from the triangle inequality, $(b)$ is mainly a consequence of the conditioning on the event $\cG$, and $(c)$ follows from the definition of $\lambda_\epsilon$. Next, we define $r = (X_\Pi^\tH X_\Pi - I)z$ and notice that, conditioned on the event $\cG$, we have for any $i \in \nN{k} \doteq \{1,\dots,k\}$ the following inequality:
\begin{align}
\nonumber
 |f_{\pi_i}| &= |z_i + r_i + \teta_{\pi_i}| \geq |z_i| - \|r\|_\infty - \|\teta\|_\infty\\
\label{eqn:thm_ems_S}
 &\stackrel{(d)}{\geq} \beta_{\min} - \epsilon\|z\|_2 - 2\sqrt{\sigma^2\log{p}} \ \stackrel{(e)}{\geq} \beta_{\min} - \lambda_\epsilon\,.
\end{align}
Here, $(d)$ follows from the conditioning on $\cG$, while $(e)$ is a simple consequence of the choice of $\lambda_\epsilon$. It can therefore be concluded from \eqref{eqn:thm_ems_Scomp} and \eqref{eqn:thm_ems_S} that if $X$ satisfies $(k,\epsilon,\delta)$-StOC and the OST algorithm uses the threshold $\lambda_\epsilon$ then $\Pr(\whcS \not= \cS) \leq \Pr(\cG^c)$ as long as $\beta_{\min} > 2\lambda_\epsilon$.

Finally, to complete the proof of this theorem, let $k \leq n/(2\log{p})$ and fix $\epsilon = 10 \mu \sqrt{2\log{p}}$. Then the claim is that $X$ satisfies $(k,\epsilon,\delta)$-StOC with $\delta \leq 4p^{-1}$. In order to establish this claim, we only need to ensure that the chosen parameters satisfy the assumptions of Lemma~\ref{lem:stoc1} and Lemma~\ref{lem:stoc2}. In this regard, note that (i) $\epsilon < 1$ because of \eqref{eqn:cp1}, and (ii) $\sqrt{k}\,\nu \leq \frac{\epsilon}{9}$ because of the assumption that $k \leq n/(2\log{p})$ and \eqref{eqn:cp2}. Therefore, since the assumption $p \geq 128$ together with $k \leq n/(2\log{p})$ implies that $16(2+ a^{-1})^2 < 72$, we obtain $\exp\left(-\frac{(\epsilon-\sqrt{k}\,\nu)^2}{16(2+ a^{-1})^2\mu^2}\right) \leq p^{-2}$. We can now combine this fact with the previously established facts to see that the threshold $\lambda = \max\Big\{\frac{1}{t}10 \mu \sqrt{n \cdot \SNR}, \frac{1}{1-t} \sqrt{2}\Big\}\sqrt{2\sigma^2\log{p}}$ guarantees that $\Pr(\whcS \not= \cS) \leq 6p^{-1}$ as long as $n \geq 2k\log{p}$ and $\beta_{\min} > 2\lambda$. Finally, note that
\begin{align}
\nonumber
	\beta_{\min} > \frac{1}{1-t}4\sqrt{\sigma^2\log{p}} \quad &\Longleftrightarrow \quad n > \frac{8(1-t)^{-2}}{\SNR_{\min}}2k\log{p}
\intertext{and}
\nonumber
	\beta_{\min} > \frac{1}{t}20 \mu \sqrt{2n\sigma^2\log{p} \cdot \SNR} \quad &\Longleftrightarrow \quad n > \bigg(\frac{c_2t^{-2}}{\MAR}2k\log{p}\bigg)^{\gamma/2}.
\end{align}
This completes the proof of the theorem.
\end{proof}

\subsubsection{Proof of Theorem~\ref{thm:pms_cp2}}
We begin by making use of the notation developed earlier in this section and conditioning on the event $\cG$ defined in \eqref{eqn:thm_ems_G} with $\epsilon = 10 \mu \sqrt{2\log{p}}$. Then it is easy to see from the proof of Theorem~\ref{thm:ems_cp} that the estimate $\whcS$ is a subset of $\cS$ because of the fact that $\|f_{\Pi^c}\|_\infty \leq \lambda$.

Next, assume without loss of generality that $z_i \equiv \beta_{(i)}$ and note from \eqref{eqn:thm_ems_S} that $|f_{\pi_i}|  \geq |\beta_{(i)}| - \lambda$ for any $i \in \{1,\dots,k\}$. Then, since $\pi_i \in \whcS$ if and only if $|f_{\pi_i}| > \lambda$, we have that $\beta_{(i)} > 2\lambda \ \Rightarrow \ \pi_i \in \whcS$. Now define $M$ to be the largest integer for which $\beta_{(M)} > 2\lambda$ holds and note that $\beta_{(M)} > 2\lambda \ \Rightarrow \ \beta_{(i)} > 2\lambda \ \Rightarrow \ \pi_i \in \whcS$ for every $i \in \{1,\dots,M\}$, which in turn implies that $\big|\cS - \whcS\big| \leq (k - M)$. Finally, note that
\begin{align}
\nonumber
	\beta_{(M)} > \frac{1}{1-t}4\sqrt{\sigma^2\log{p}} \quad &\Longleftrightarrow \quad \LAR_{M} > 8(1-t)^{-2} \left(\frac{2k\log{p}}{n \cdot \SNR}\right)
\intertext{and}
\nonumber
	\beta_{(M)} > \frac{1}{t}20 \mu \sqrt{2n\sigma^2\log{p} \cdot \SNR} \quad &\Longleftrightarrow \quad \LAR_{M} > 400t^{-2} \left(\frac{2k\log{p}}{\mu^{-2}}\right)
\end{align}
This completes the proof of the theorem since the event $\cG$ holds with probability exceeding $1 - 6p^{-1}$.\hfill\QED

\subsubsection{Proof of Theorem~\ref{thm:sig_rec}}
The first key result that we will need to prove Theorem~\ref{thm:sig_rec} is regarding the expected spectral norm of a random principal-submatrix of $(X^\tH X - I)$. The following result is mainly due to Tropp \cite{tropp:cras08} and it was first presented in the following form by Cand\`{e}s and Plan in \cite{candes:annstat09}.
\begin{proposition}[\!\!\cite{tropp:cras08,candes:annstat09}]\label{prop:ex_norm_sm}
Let $\bar{\Pi} = (\pi_1,\dots,\pi_p)$ be a random permutation of $\nN{p}$ and define $\Pi \doteq (\pi_1,\dots,\pi_k)$ for any $k \in \nN{p}$. Then, for $q = 2\log{p}$, we have
\begin{align}
 \Big(\E\Big[\big\|X^\tH_\Pi X_\Pi - I\big\|_2^q\Big]\Big)^{1/q} \leq 2^{1/q} \bigg(30\mu\log{p} + 13\sqrt{\frac{2k\|X\|_2^2\log{p}}{p}}\,\bigg)
\end{align}
provided that $k \leq p/4\|X\|_2^2$. Here, the expectation is with respect to the random permutation $\bar{\Pi}$.
\end{proposition}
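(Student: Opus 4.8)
The plan is to follow Tropp's argument for the conditioning of random subdictionaries \cite{tropp:cras08}, which controls the $q$-th spectral-norm moment of a random principal submatrix by decoupling its quadratic dependence on the random index set and then invoking a noncommutative moment inequality. Write $A \doteq X^\tH X - I$; since $X$ has unit-norm columns, $A$ is hollow, $\|A\|_{\max} = \mu$, and $\|A\|_2 \le \|X\|_2^2 + 1$. Let $R_\Pi$ be the random $p \times p$ diagonal $0$--$1$ projector onto the coordinates in $\Pi$; deleting the zero rows and columns does not change the spectral norm, so $\|X_\Pi^\tH X_\Pi - I\|_2 = \|R_\Pi A R_\Pi\|_2$ and the object to bound is $\bigl(\E\|R_\Pi A R_\Pi\|_2^q\bigr)^{1/q}$ with $q = 2\log p$.

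First I would replace the uniform $k$-subset model by independent Bernoulli selectors: let $\delta_1,\dots,\delta_p$ be i.i.d.\ $\mathrm{Bernoulli}(\theta)$ with $\theta$ a fixed constant multiple of $k/p$, and $R_\delta \doteq \diag(\delta)$. A standard comparison --- sampling without replacement is the more concentrated model, so its matrix functionals have smaller $q$-th moments than those of the slightly inflated Bernoulli model --- yields $\E\|R_\Pi A R_\Pi\|_2^q \le C^q\,\E\|R_\delta A R_\delta\|_2^q$ for a universal $C$; keeping $\theta \le 1$ here is where $k \le p$ (a consequence of the stated hypothesis) enters. Because $A$ is hollow, $R_\delta A R_\delta = \sum_{i\ne j}\delta_i\delta_j A_{ij}\,e_i e_j^\tH$ is an off-diagonal chaos in $\delta$, so the decoupling inequality for such chaoses lets me pass, at the cost of another universal constant, to $\E\|R_\delta A R_{\delta'}\|_2^q$ with $R_{\delta'}$ built from an independent copy $\delta'$.

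Next I would condition on $\delta'$ and set $M \doteq A R_{\delta'}$, so that the remaining task is to bound $\bigl(\E_\delta\|R_\delta M\|_2^q\bigr)^{1/q}$, the $q$-th moment of the spectral norm of a random row-submatrix of $M$; equivalently $\|R_\delta M\|_2^2 = \bigl\|\sum_i \delta_i\, m_i m_i^\tH\bigr\|_2$, where $m_i$ is the $i$-th row of $M$ viewed as a column vector. After symmetrization this is exactly the setting of Rudelson's selection lemma in its $L_q$ form (with the explicit constants recorded in \cite{tropp:cras08}): $\bigl(\E_\delta\|R_\delta M\|_2^q\bigr)^{1/q}$ is bounded by a constant times $\sqrt{\theta}\,\|M\|_2$ plus a term of order $\sqrt{q}\,\max_i\|m_i\|_2$, modulo lower-order pieces. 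Now $\|M\|_2 \le \|A\|_2 \lesssim \|X\|_2^2$ while $\|m_i\|_2 \le \mu\sqrt{\#\{j:\delta'_j=1\}} \lesssim \mu\sqrt{k}$ since every entry of $A$ is at most $\mu$ in modulus; substituting $\theta \asymp k/p$ and $q = 2\log p$ and then taking the outer expectation over $\delta'$ produces the two advertised terms --- the ``$\mu\log p$'' term from the coherence-controlled row norms, and the ``$\sqrt{2k\|X\|_2^2\log p/p}$'' term from $\sqrt{\theta}\,\|M\|_2$. The hypothesis $k \le p/(4\|X\|_2^2)$ is what makes the residual correction $\theta M M^\tH$ (and hence the bootstrap needed to close the symmetrization estimate) small, and it is also what guarantees the second term dominates. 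Collecting the decoupling, Bernoulli-comparison, and symmetrization constants, together with the $2^{1/q}$ from the final triangle/union step, and optimizing the absolute constants, gives the claimed $30\mu\log p + 13\sqrt{2k\|X\|_2^2\log p/p}$.

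The main obstacle is not the structure of the argument but the bookkeeping of \emph{explicit, small} constants through the chain decoupling $\to$ symmetrization $\to$ noncommutative Khintchine/Rudelson --- the generic forms of these tools carry unspecified universal constants, and extracting the sharp numbers $30$, $13$, and $2^{1/q}$ is precisely the technical content of Tropp's paper that Cand\`{e}s and Plan transcribe in \cite{candes:annstat09}. A secondary subtlety is the without-replacement-to-Bernoulli reduction: the intuitively correct direction of the comparison must be turned into a clean $q$-th-moment inequality for a matrix-valued functional, typically via Poissonization or a careful contraction argument, and one must verify $k \le p/(4\|X\|_2^2)$ suffices wherever the argument needs the selection rate or the residual $\theta M M^\tH$ to be small. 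Given these points, the most economical honest write-up is to reduce the statement to the corresponding theorem of \cite{tropp:cras08}, checking only that the normalization used here (unit-norm columns, hollow $A$, $q = 2\log p$) matches.
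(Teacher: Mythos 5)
You should first note how the paper itself handles this statement: it offers no proof at all---Proposition~\ref{prop:ex_norm_sm} is imported verbatim, constants included, from Tropp's note \cite{tropp:cras08} as transcribed by Cand\`{e}s and Plan \cite{candes:annstat09}. So your closing recommendation (reduce to the cited theorem after checking the normalization, the hollowness of $X^\tH X - I$, and $q = 2\log p$) coincides with what the paper actually does, and your skeleton---uniform-to-Bernoulli comparison for a functional monotone under set inclusion, decoupling of the hollow chaos, then a Rudelson/noncommutative-Khintchine subsampling estimate---is the right outline of the cited proof.

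There is, however, a concrete flaw in the way you extract the two advertised terms. After conditioning on $\delta'$ you bound the row norms of $M = A R_{\delta'}$ deterministically by $\max_i \|m_i\|_2 \leq \mu\sqrt{k}$, so the ``coherence'' contribution of your one-pass application of the subsampling lemma is $\sqrt{q}\,\mu\sqrt{k} \asymp \mu\sqrt{2k\log p}$, not $30\,\mu\log p$. This is not a harmless bookkeeping issue: $\mu\sqrt{2k\log p}$ exceeds the term $13\sqrt{2k\|X\|_2^2\log p/p}$ by a factor of order $\mu\sqrt{p}/\|X\|_2$, which is already $\asymp \sqrt{\log p}$ for Gaussian designs and can be polynomially large for other matrices allowed here, and for $k \gg \log p$ it also exceeds $30\,\mu\log p$; so the estimate you sketch cannot deliver the proposition in the stated form. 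The cited argument applies the subsampling/Khintchine estimate to \emph{both} coordinates of the decoupled chaos: the randomness of $\delta'$ is used a second time to bound the expected maximal row norm of $A R_{\delta'}$ by roughly $\sqrt{q}\,\mu + \sqrt{k/p}\,\varrho$, where $\varrho$ is the maximal column $\ell_2$-norm of $X^\tH X - I$ (at most $\|X\|_2$) and the maximum over the $p$ rows is absorbed because $q = 2\log p$; the outer factor $\sqrt{q}$ then produces $q\mu = 2\mu\log p$, the cross term $\sqrt{qk/p}\,\varrho$ produces $\sqrt{2k\|X\|_2^2\log p/p}$, and the residual $(k/p)\|X^\tH X - I\|_2$ is folded into the second term precisely because $k \leq p/(4\|X\|_2^2)$---so that hypothesis does more than control your bootstrap. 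With this iterated application (and the constant-chasing you correctly attribute to \cite{tropp:cras08,candes:annstat09}), your outline matches the source; as written, the derivation of the $\mu\log p$ term does not go through.
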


Using this result, it is easy to obtain a probabilistic bound (with respect to the random permutation $\bar{\Pi}$) on the minimum and maximum singular values of a random submatrix of $X$ since, by Markov's inequality, we have that $\Pr\Big(\big\|X_\Pi^\tH X_\Pi - I\big\|_2 \geq \varsigma\Big) \leq \varsigma^{-q} \E\Big[\big\|X_\Pi^\tH X_\Pi - I\big\|_2^q\Big]$. The following result is simply a generalization of the corresponding result presented in \cite{candes:annstat09}.
\begin{proposition}[Extreme Singular Values of a Random Submatrix]\label{prop:norm_sm}
Let $\Pi = (\pi_1,\dots,\pi_k)$ denote the first $k$ elements of a random permutation of $\nN{p}$ and suppose that $\mu(X) \leq (c_1^\prime\log{p})^{-1}$ and $k \leq p / ({c_2^\prime}^2 \|X\|_2^2\log{p})$ for numerical constants $c_1^\prime \doteq 60\mathrm{e}$ and $c_2^\prime \doteq 37\mathrm{e}$. Then we have that
\begin{align}
 \Pr\Big(\big\|X_\Pi^\tH X_\Pi - I\big\|_2 \geq \mathrm{e}^{-1/2}\Big) \leq 2 p^{-1}\,.
\end{align}
\end{proposition}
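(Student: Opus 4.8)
The plan is to combine the high-moment bound of Proposition~\ref{prop:ex_norm_sm} with Markov's inequality, in the spirit of the argument of Cand\`es and Plan. Throughout I set $q = 2\log p$.

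First I would verify that the hypotheses of Proposition~\ref{prop:ex_norm_sm} are actually in force, i.e., that $k \leq p/(4\|X\|_2^2)$. This is immediate from the assumption $k \leq p/({c_2^\prime}^2\|X\|_2^2\log p)$ because ${c_2^\prime}^2\log p = (37\mathrm{e})^2\log p > 4$ for every $p \geq 2$ (and certainly for the regime $p \geq 128$ of interest). Hence Proposition~\ref{prop:ex_norm_sm} applies and gives, with $q = 2\log p$,
\begin{align*}
\Big(\E\big[\,\|X_\Pi^\tH X_\Pi - I\|_2^q\,\big]\Big)^{1/q} \;\leq\; 2^{1/q}\Big(30\mu\log p + 13\sqrt{\tfrac{2k\|X\|_2^2\log p}{p}}\Big).
\end{align*}

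Next I would bound the two terms in the parentheses separately using the two hypotheses on $\mu$ and $k$. From $\mu \leq (c_1^\prime\log p)^{-1} = (60\mathrm{e}\log p)^{-1}$ one gets $30\mu\log p \leq \tfrac{1}{2\mathrm{e}}$, and from $k \leq p/({c_2^\prime}^2\|X\|_2^2\log p)$ one gets $13\sqrt{2k\|X\|_2^2\log p/p} \leq 13\sqrt{2}/(37\mathrm{e})$. The crucial numerical fact is that $13\sqrt{2} < 37/2$, so this second contribution is also at most $\tfrac{1}{2\mathrm{e}}$; adding the two pieces yields $\big(\E[\|X_\Pi^\tH X_\Pi - I\|_2^q]\big)^{1/q} \leq 2^{1/q}\mathrm{e}^{-1}$, equivalently $\E[\|X_\Pi^\tH X_\Pi - I\|_2^q] \leq 2\mathrm{e}^{-q}$.

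Finally I would apply Markov's inequality with $\varsigma = \mathrm{e}^{-1/2}$: since $\Pr\big(\|X_\Pi^\tH X_\Pi - I\|_2 \geq \varsigma\big) \leq \varsigma^{-q}\,\E[\|X_\Pi^\tH X_\Pi - I\|_2^q]$, we obtain
\begin{align*}
\Pr\Big(\|X_\Pi^\tH X_\Pi - I\|_2 \geq \mathrm{e}^{-1/2}\Big) \;\leq\; \mathrm{e}^{q/2}\cdot 2\mathrm{e}^{-q} \;=\; 2\mathrm{e}^{-q/2} \;=\; 2p^{-1},
\end{align*}
where the last step uses $q = 2\log p$, so $\mathrm{e}^{-q/2} = p^{-1}$. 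There is no substantive obstacle here beyond careful bookkeeping: the one point that really matters is checking that the absolute constants $c_1^\prime = 60\mathrm{e}$ and $c_2^\prime = 37\mathrm{e}$ are exactly large enough — concretely, that $30/60 = 1/2$ and $13\sqrt{2}/37 < 1/2$ — so that each of the two contributions to the moment bound stays below $(2\mathrm{e})^{-1}$ and the final exponent collapses to exactly $p^{-1}$ rather than a weaker power of $p$.
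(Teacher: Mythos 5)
Your proof is correct and follows exactly the route the paper intends: apply the Tropp/Cand\`es--Plan moment bound (Proposition~\ref{prop:ex_norm_sm}) with $q = 2\log p$, use the hypotheses $\mu \leq (60\mathrm{e}\log p)^{-1}$ and $k \leq p/((37\mathrm{e})^2\|X\|_2^2\log p)$ to drive each of the two terms below $(2\mathrm{e})^{-1}$, and finish with Markov's inequality at level $\varsigma = \mathrm{e}^{-1/2}$, which the paper itself states as the mechanism. Your numerical checks ($30/60 = 1/2$, $13\sqrt{2} < 37/2$, and $(37\mathrm{e})^2\log p > 4$ so the moment bound applies) are all sound, so there is nothing to add.
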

\noindent Note that Proposition~\ref{prop:norm_sm} guarantees that, under certain conditions on $\mu(X)$ and $k$, every singular value of \emph{most} $n \times k$ submatrices of $X$ lies within $\big(\sqrt{1-\mathrm{e}^{-1/2}}, \sqrt{1+\mathrm{e}^{-1/2}}\,\big)$. We are now ready to provide a proof of Theorem~\ref{thm:sig_rec} that relies on this key result as well as on Lemma~\ref{lem:stoc1} and Lemma~\ref{lem:stoc2}.
\begin{proof}[Proof (Theorem~\ref{thm:sig_rec})]
The proof of this theorem follows along somewhat similar lines as the proof of Theorem~\ref{thm:ems_cp}. Specifically, by making use of the notation developed at the start of this section, we write $f = X^\tH y \equiv X^\tH X_\Pi z$ and first argue that the set of indices $\cI = \left\{i \in \nN{p} : |f_i| > \lambda\right\}$ is the same as the true model $\cS$ with high probability. Then we make use of the union bound and argue using Proposition~\ref{prop:norm_sm} that $\widehat{\beta} = \beta$ with high probability.

In this regard, recall that it was established in the proof of Theorem~\ref{thm:ems_cp} using Lemma~\ref{lem:stoc1} and Lemma~\ref{lem:stoc2} that if $X$ obeys the coherence property then it satisfies $(k,\epsilon,\delta)$-StOC with $\epsilon = 10 \mu \sqrt{2\log{p}}$ and $\delta \leq 4p^{-1}$ as long as $k \leq n/(2\log{p})$. This fact therefore implies that, under the assumptions of the theorem,\footnote{Note that the assumptions of the theorem trivially guarantee the condition $k \leq n/(2\log{p})$ since we have that $\|X\|_2^2 \geq p/n$ from elementary linear algebra.} the following inequalities hold with probability exceeding $1 - 4p^{-1}$:
\begin{align}
\label{eqn:thm_sigrec_1}
 \|f_{\Pi^c}\|_\infty &= \|X_{\Pi^c}^\tH X_\Pi z\|_\infty \leq \epsilon \|z\|_2, \quad \text{and}\\
\label{eqn:thm_sigrec_2}
 \min_{i \in \{1,\dots,k\}}|f_{\pi_i}| &\geq \beta_{\min} - \|(X_\Pi^\tH X_\Pi - I)z\|_\infty \geq \beta_{\min} - \epsilon \|z\|_2.
\end{align}
Also note that, conditioned on the probability event $\cE \doteq \big\{\big\|X_\Pi^\tH X_\Pi - I\big\|_2 < \mathrm{e}^{-1/2}\big\}$, we can write
\begin{align}
\label{eqn:thm_sigrec_rip}
 \sqrt{1-\mathrm{e}^{-1/2}}\,\|z\|_2 < \|\underbrace{X_\Pi z}_{\equiv \, y}\|_2 < \sqrt{1+\mathrm{e}^{-1/2}}\,\|z\|_2.
\end{align}
Therefore if we condition on the event $\cE$ then it trivially follows from the assumptions of the theorem and \eqref{eqn:thm_sigrec_1} and \eqref{eqn:thm_sigrec_2} that $\cI = \cS$ with probability exceeding $1 - 4p^{-1}$ since (i) $\cI \subset \cS$ because $\|f_{\Pi^c}\|_\infty < \frac{\epsilon\|y\|_2}{\sqrt{1-\mathrm{e}^{-1/2}}} \equiv \lambda$ (cf.~\eqref{eqn:thm_sigrec_1}, \eqref{eqn:thm_sigrec_rip}), and (ii) $\cI \supset \cS$ because $k \leq \mu^{-2} \MAR/(c_4^2\log{p})$ implies that $\beta_{\min} - \epsilon \|z\|_2 > \lambda \ \Rightarrow \ \min_{i \in \{1,\dots,k\}}|f_{\pi_i}| > \lambda$ (cf.~\eqref{eqn:thm_sigrec_2}, \eqref{eqn:thm_sigrec_rip}). Consequently, we conclude that $(X_{\cI})^\dag = (X_{\Pi}^\tH X_{\Pi})^{-1} X_{\Pi}^\tH$ with high probability when conditioned on the probability event $\cE$, which in turn implies that $\widehat{\beta}_{\cI} = (X_{\cI})^\dag X_\Pi z \equiv \beta_{\cS}$ with probability exceeding $1 - 4p^{-1}$ when conditioned on $\cE$. The claim of the theorem now follows trivially from the union bound and the fact that $\Pr(\cE^c) \leq 2p^{-1}$ from Proposition~\ref{prop:norm_sm} since $X$ satisfies the strong coherence property and $k \leq p/(c_3^2 \|X\|_2^2 \log{p})$.
\end{proof}

\section{Conclusions}\label{sec:disc}
In the modern statistics and signal processing literature, the lasso has arguably become the standard tool for model selection because of its computational tractability \cite{tibshirani:jrss96} and some recent theoretical guarantees \cite{meinshausen:annstat06,zhao:jmlr06,wainwright:tit09,candes:annstat09}. Nevertheless, it is desirable to study alternative solutions to the lasso since (i) it is still computationally expensive for massively large-scale inference problems (think of $p$ in the millions), (ii) it lacks theoretical guarantees beyond $k \succsim \mu^{-1}$ for the case of generic design matrices and arbitrary nonzero entries, and (iii) it requires the submatrices of the design matrix to have full rank, which seems reasonable for signal reconstruction but appears too restrictive for model selection.

In this paper, we have revisited two variants of the oft-forgotten but extremely fast one-step thresholding (OST) algorithm for model selection. One of the key insights offered by the paper in this regard is that polynomial-time model selection can be carried out even when signal reconstruction (and thereby the lasso) fails. In addition, we have established in the paper that if the $n \times p$ design matrix $X$ satisfies $\mu(X) \asymp n^{-1/2}$ and $\nu(X) \precsim n^{-1}$ then OST can perform near-optimally for the case when either (i) the minimum-to-average ratio (\MAR) of the signal is not too small or (ii) the signal-to-noise ratio (\SNR) in the measurement system is not too high. It is worth pointing out here that some researchers in the past have observed that the sorted variant of the OST (SOST) algorithm at times performs similar to or better than the lasso (see Fig.~\ref{fig:SOST_vs_Lasso} for an illustration of this in the case of an Alltop Gabor frame in $\C^{127}$). One of our main contributions in this regard is that we have taken the mystery out of this observation and explicitly specified in the paper the four key parameters of the model-selection problem, namely, $\mu(X), \nu(X), \MAR$, and $\SNR$, that determine the non-asymptotic performance of the SOST algorithm for generic (random or deterministic) design matrices and data vectors having generic (random or deterministic) nonzero entries; also, see \cite{genovese:sub09} for a comparison of our results with corresponding results recently reported in the literature.
\begin{figure*}[t]
\centering%
\subfigure[]{\includegraphics[scale=0.38]{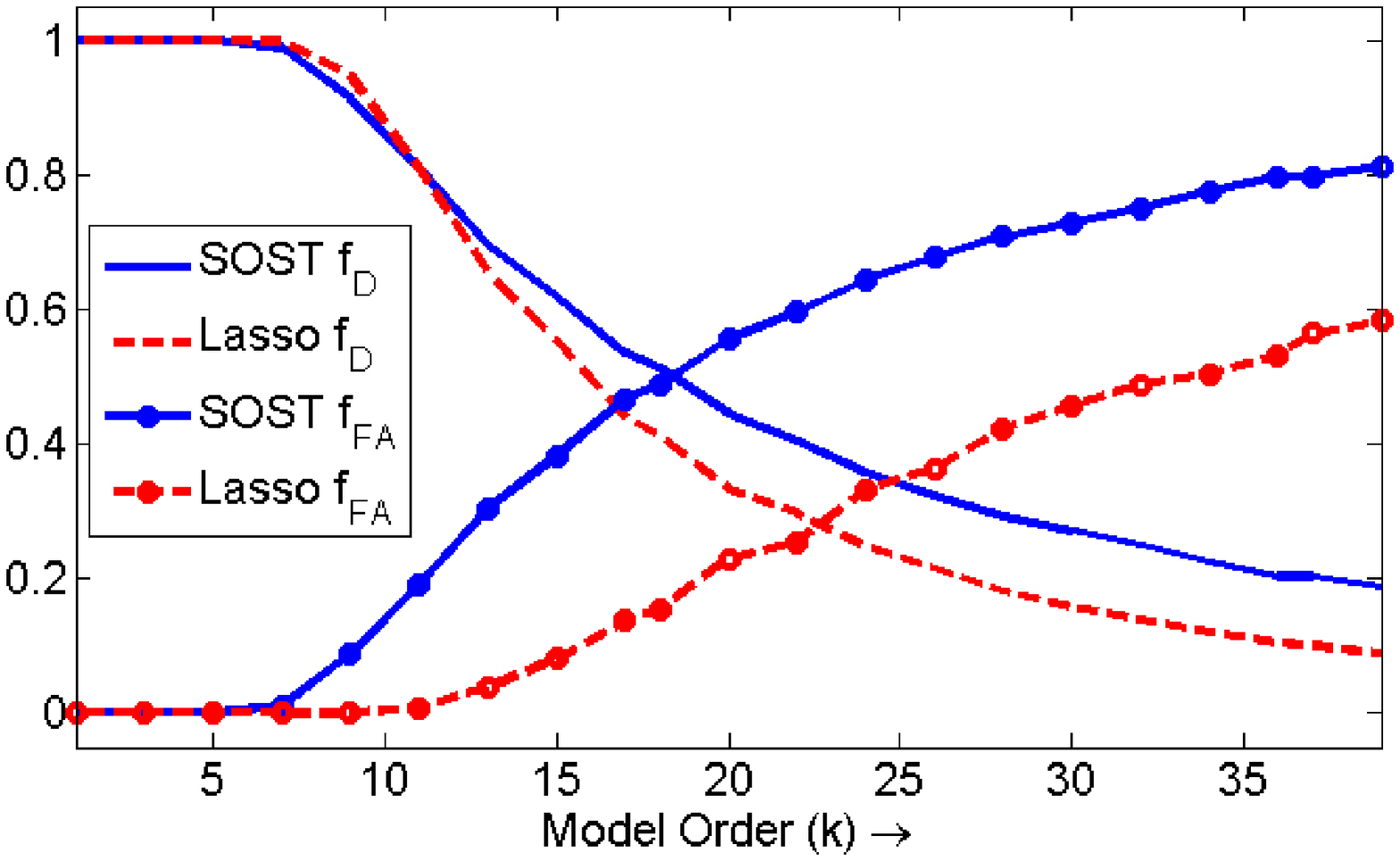}}%
\hfill%
\subfigure[]{\includegraphics[scale=0.38]{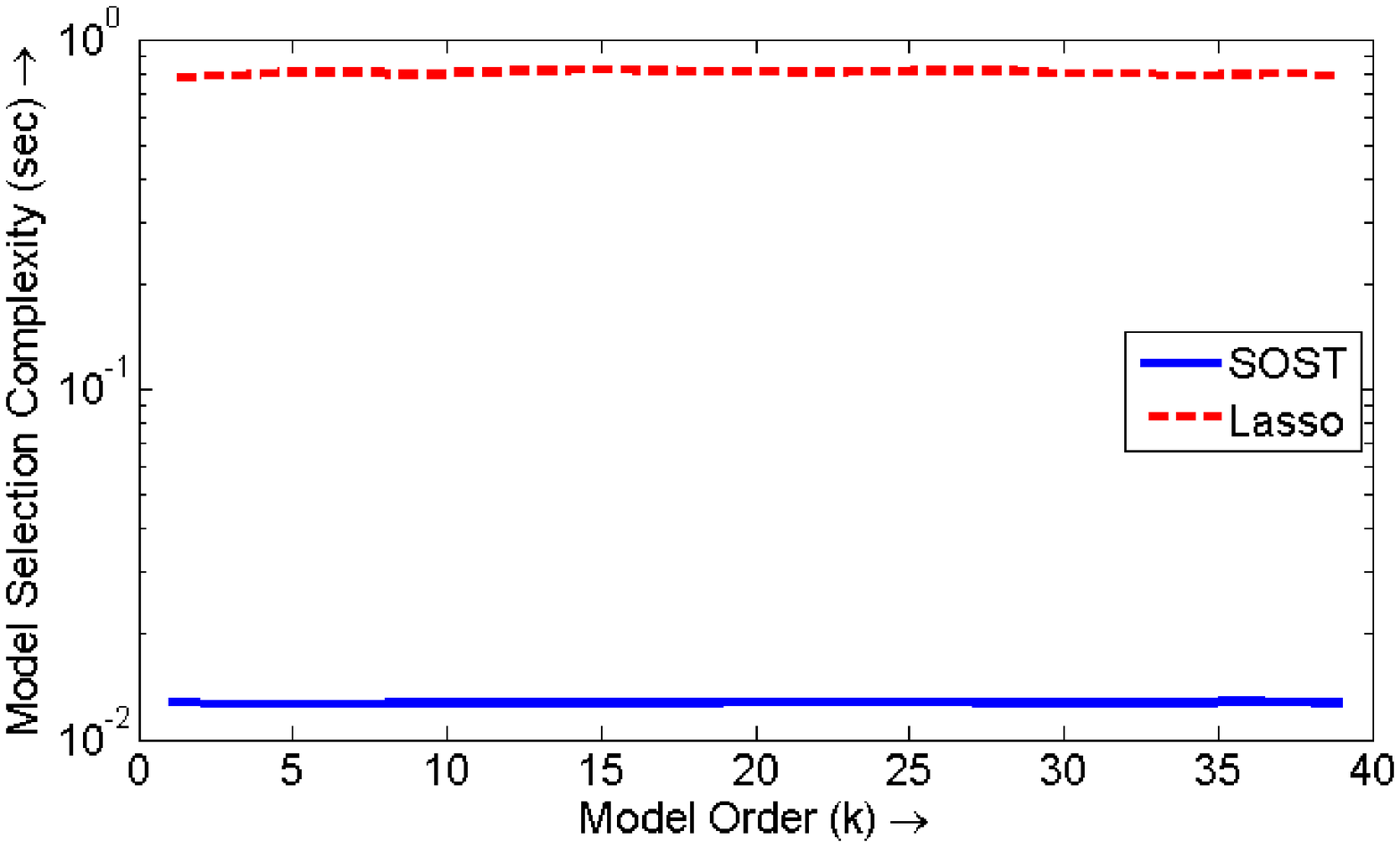}}%
\caption{Numerical comparisons between the performance of the SOST algorithm (Algorithm~\ref{alg:SOST}) and the lasso \cite{tibshirani:jrss96} using an Alltop Gabor frame. The $n \times p$ design matrix $X$ has dimensions $n = 127$ and $p = n^2$, the $\MAR$ of the signals is $1$, the $\SNR$ in the measurement system is $10 \text{ dB}$, and the noise variance is $\sigma^2 = 10^{-2}$. The matrix--vector multiplications are carried out using the fast Fourier transform, while the lasso is solved using the SpaRSA package \cite{wright:tsp09} with the regularization parameter set to $\tau = 2\sqrt{2\sigma^2\log{p}}$ \cite{candes:annstat09}. (a) Plots of the \emph{fraction of detections}, defined as $f_D = \frac{|\cS \cap \whcS|}{k}$, and the \emph{fraction of false alarms}, defined as $f_{F\!A} \defn \frac{|\whcS| - |\cS \cap \whcS|}{|\whcS|}$, versus the model order (averaged over $200$ independent trials) for both SOST and the lasso. (b) Plots of the amount of time (averaged over $200$ independent trials) that it takes SOST and the lasso to solve one model-selection problem versus the model order.}%
\label{fig:SOST_vs_Lasso}%
\end{figure*}

The second main contribution of this paper---which completely sets it apart from existing work on thresholding for model selection---is that we have proposed and analyzed a model-order agnostic threshold for the OST algorithm. The significance of this aspect of the paper can be best understood by realizing that in real-world applications it is often easier to estimate the $\SNR$ and the noise variance in the system than to estimate the true model order. In particular, we have established in the paper that the threshold $\lambda = \max\Big\{\frac{1}{t}10 \mu \sqrt{n \cdot \SNR}, \frac{1}{1-t} \sqrt{2}\Big\}\sqrt{2\sigma^2\log{p}}$ for $t \in (0,1)$ enables the OST algorithm to carry out near-optimal partial model selection. It is worth pointing out here that this threshold is rather conservative in nature for small-scale problems (see \eqref{eqnthm:pms_2}) and we believe that there is still a lot of room for improvement as far as reducing (or eliminating) some of the constants in the threshold is concerned. In particular, it is easy to see from the proof of Theorem~\ref{thm:ems_cp} that the constant $10$ in the threshold is mainly there due to a number of loose upperbounds; in fact, this constant was $24$ in a conference version of this paper \cite{bajwa:isit10} and we believe that it can be reduced even further. Some of the numerical experiments that we have carried out in this regard also seem to lend credence to our belief. Specifically, Fig.~\ref{fig:OST_Gabor} reports the results of one such experiment concerning partial model-selection performance of the OST algorithm in terms of the metrics of \emph{fraction of detections}, $f_D \defn \frac{|\cS \cap \whcS|}{k}$, and \emph{fraction of false alarms}, $f_{F\!A} \defn \frac{|\whcS| - |\cS \cap \whcS|}{|\whcS|}$, averaged over $200$ independent trials. In this experiment, the $n \times p$ design matrix $X$ corresponds to an Alltop Gabor frame in $\C^{997}$, the noise variance is $\sigma^2 = 10^{-2}$, the $\MAR$ and the $\SNR$ are chosen to be $1$ and $3 \text{ dB}$, respectively, and the initial threshold is set at $\lambda_s \defn \max\Big\{\frac{1}{t}c^\prime \mu \sqrt{n \cdot \SNR}, \frac{1}{1-t} \sqrt{2}\Big\}\sqrt{2\sigma^2\log{p}}$ with $t = (\sqrt{2} - 1)/\sqrt{2}$ and $c^\prime = 2t$. It can be easily seen from Fig.~\ref{fig:OST_Gabor} that OST successfully carries out partial model selection $(f_{F\!A} \equiv 0)$ even when the threshold is set at $0.6 \lambda_s$, which proves the somewhat conservative nature of the proposed threshold in terms of the constants.
\begin{figure*}[t]
\centering%
\includegraphics[scale=0.55]{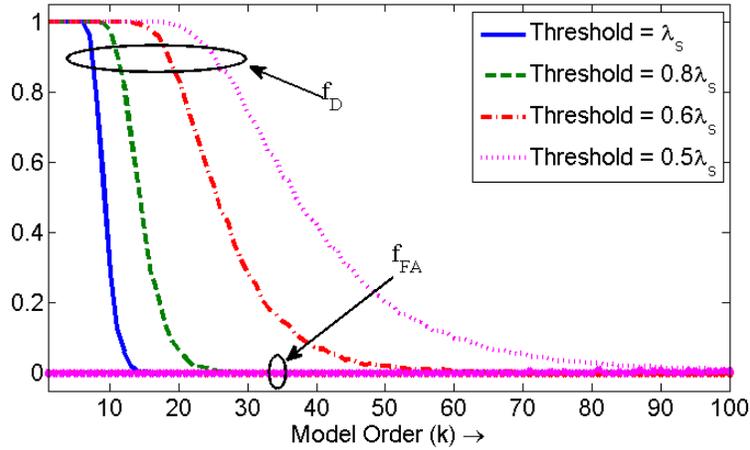}
\caption{Partial model-selection performance of the OST algorithm (averaged over $200$ independent trials) corresponding to an Alltop Gabor frame in $\C^{997}$. The $\MAR$ of the signals in this experiment is $1$, the $\SNR$ in the measurement system is $3 \text{ dB}$, and the noise variance is $\sigma^2 = 10^{-2}$.}
\label{fig:OST_Gabor}%
\end{figure*}

Finally, the third main contribution of this paper is that we have extended our results on model selection using OST to low-complexity recovery of sparse signals. In particular, within the area of low-complexity algorithms for sparse-signal recovery (such as, matching pursuit \cite{mallat:tsp93}, subspace pursuit \cite{dai:tit09}, CoSaMP \cite{needell:acha09}, and iterative hard thresholding \cite{blumensath:acha09}), we have for the first time specified polynomial-time verifiable sufficient conditions under which recovery of sparse signals having generic (random or deterministic) nonzero entries succeeds using generic (random or deterministic) design matrices. In addition, we have also provided a bound in the paper on the average coherence of generic Gabor frames and used this result to establish that an Alltop Gabor frame in $\C^n$ can be used together with the OST algorithm to successfully carry out model selection and recovery of sparse signals irrespective of the phases of the nonzero entries even if the number of nonzero entries scales almost linearly with $n$.

\appendices
\section{Concentration Inequalities}\label{app:conc_ineq}
In this appendix, we collect the various concentration inequalities that are used throughout the paper.
\begin{proposition}[The Azuma Inequality \cite{azuma:tmj67}]\label{prop:az_ineq}
Let $(\Omega,\cF,\bbP)$ be a probability space and let $(M_0,M_1,\dots,M_n)$ be a bounded difference, (real-valued) martingale sequence on $(\Omega,\cF,\bbP)$. That is, $\E[M_i] = M_{i-1}$ and $|M_i - M_{i-1}| \leq b_i$ for every $i=1,\dots,n$. Then for every $\epsilon \geq 0$, we have
\begin{align}
	\Pr\left(|M_n - M_0| \geq \epsilon\right) \leq 2\exp\left(-\frac{\epsilon^2}{2\sum\limits_{i=1}^{n}b_i^2}\right).
\end{align}
\end{proposition}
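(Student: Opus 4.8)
The plan is to prove the Azuma inequality by the exponential-moment (Chernoff-type) method applied to the martingale difference sequence. Let $(\cF_i)_{i=0}^n$ be the natural filtration of $(M_i)$ and set $D_i \doteq M_i - M_{i-1}$, so that $\E[D_i \mid \cF_{i-1}] = 0$ and $|D_i| \leq b_i$ for each $i$, and $M_n - M_0 = \sum_{i=1}^n D_i$. Fixing $\lambda > 0$, Markov's inequality gives $\Pr(M_n - M_0 \geq \epsilon) \leq e^{-\lambda\epsilon}\,\E[e^{\lambda(M_n - M_0)}]$, so everything reduces to bounding the moment generating function of $M_n - M_0$.

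The core of the argument is to peel off one difference at a time. Since $M_{n-1} - M_0$ is $\cF_{n-1}$-measurable, the tower property yields
\begin{align*}
\E\big[e^{\lambda(M_n - M_0)}\big] = \E\Big[e^{\lambda(M_{n-1} - M_0)}\,\E\big[e^{\lambda D_n}\,\big|\,\cF_{n-1}\big]\Big].
\end{align*}
I would then invoke the conditional form of Hoeffding's lemma: a random variable $D$ with conditional mean zero and $|D|\leq b$ satisfies $\E[e^{\lambda D}\mid\cdot] \leq e^{\lambda^2 b^2/2}$. This gives $\E[e^{\lambda D_n}\mid\cF_{n-1}] \leq e^{\lambda^2 b_n^2/2}$, and iterating the recursion down to $M_0$ produces $\E[e^{\lambda(M_n - M_0)}] \leq \exp\!\big(\tfrac{\lambda^2}{2}\sum_{i=1}^n b_i^2\big)$.

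Combining with Markov gives $\Pr(M_n - M_0 \geq \epsilon) \leq \exp\!\big(\tfrac{\lambda^2}{2}\sum_i b_i^2 - \lambda\epsilon\big)$ for every $\lambda > 0$. Optimizing the exponent by taking $\lambda = \epsilon/\sum_i b_i^2$ yields the one-sided bound $\exp\!\big(-\epsilon^2/(2\sum_i b_i^2)\big)$. Running the identical argument on the martingale $(-M_i)$ controls the lower tail $\Pr(M_n - M_0 \leq -\epsilon)$ by the same quantity, and a union bound over the two tails introduces the factor of $2$, which finishes the proof.

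The only nonroutine ingredient---and hence the main obstacle---is establishing Hoeffding's lemma in its conditional version. I would prove it by convexity of $x\mapsto e^{\lambda x}$: for $x\in[-b,b]$ one has $e^{\lambda x} \leq \frac{b-x}{2b}e^{-\lambda b} + \frac{b+x}{2b}e^{\lambda b}$; taking the conditional expectation annihilates the term linear in $x$ because $D$ has conditional mean zero, leaving $\cosh(\lambda b)$, which is bounded by $e^{\lambda^2 b^2/2}$ by a termwise comparison of Taylor series (using $(2k)! \geq 2^k k!$). A minor point requiring care is the measurability bookkeeping in the iterated conditioning, but this is automatic once the natural filtration is fixed and the tower property is applied at each stage.
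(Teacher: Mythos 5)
Your proof is correct: it is the standard Chernoff--Hoeffding argument (conditional Hoeffding lemma via convexity, tower-property peeling of one martingale difference at a time, optimization in $\lambda$, and a union bound over the two tails), which is essentially the classical proof behind the cited result; the paper itself states Proposition~\ref{prop:az_ineq} without proof, simply citing \cite{azuma:tmj67}, so there is nothing in the paper for your argument to diverge from. The only cosmetic point is that the paper's hypothesis $\E[M_i]=M_{i-1}$ must be read as the conditional statement $\E[M_i\mid\cF_{i-1}]=M_{i-1}$, which is exactly how you treat it by fixing the natural filtration.
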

\begin{proposition}[Inner Product of Independent Gaussian Random Vectors \cite{haupt:tit08sub}]\label{prop:haupt_ip}
Let $\mathrm{x}, \mathrm{y} \in \R^n$ be two random vectors that are independently drawn from $\cN(\bzero, \sigma^2 I)$ distribution. Then for every $\epsilon \geq 0$, we have
\begin{align}
 \Pr\Big(\big|\langle\mathrm{x}, \mathrm{y}\rangle\big| \geq \epsilon\Big) \leq 2 \exp\left(-\frac{\epsilon^2}{4\sigma^2(n\sigma^2 + \epsilon/2)}\right).
\end{align}
\end{proposition}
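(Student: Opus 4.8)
The plan is to prove the one‑sided bound $\Pr\big(\langle\mathrm{x},\mathrm{y}\rangle\ge\epsilon\big)\le\exp\!\big(-\tfrac{\epsilon^2}{4\sigma^2(n\sigma^2+\epsilon/2)}\big)$ by a Chernoff argument and then recover the two‑sided statement by symmetry, at the cost of the factor $2$. Write $\langle\mathrm{x},\mathrm{y}\rangle=\sum_{i=1}^n x_iy_i$, a sum of $n$ i.i.d.\ terms, each the product of two independent $\cN(0,\sigma^2)$ variables. The first step is to compute the moment generating function of a single term: conditioning on $y_i$ gives $\E\big[e^{s x_iy_i}\mid y_i\big]=e^{s^2\sigma^2 y_i^2/2}$, and averaging over $y_i$ (using $\E[e^{t y_i^2}]=(1-2\sigma^2 t)^{-1/2}$ for $t<(2\sigma^2)^{-1}$, with $t=s^2\sigma^2/2$) yields $\E\big[e^{s x_iy_i}\big]=(1-\sigma^4 s^2)^{-1/2}$ for $|s|<\sigma^{-2}$. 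By independence, $\E\big[e^{s\langle\mathrm{x},\mathrm{y}\rangle}\big]=(1-\sigma^4 s^2)^{-n/2}$ on the same range of $s$.

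Next I would invoke the Chernoff bound $\Pr\big(\langle\mathrm{x},\mathrm{y}\rangle\ge\epsilon\big)\le\exp\!\big(-s\epsilon-\tfrac n2\log(1-\sigma^4 s^2)\big)$ for $0<s<\sigma^{-2}$, and control the cumulant term with the elementary inequality $-\log(1-u)\le u/(1-u)$ valid for $u\in[0,1)$. With $u=\sigma^4 s^2$, and bounding the harmless factor $(1+\sigma^2 s)^{-1}\le 1$, this gives $-\tfrac n2\log(1-\sigma^4 s^2)\le\frac{n\sigma^4 s^2}{2(1-\sigma^2 s)}$, which is exactly a Bernstein‑type bound $\frac{s^2 v}{2(1-cs)}$ with $v=n\sigma^4$ and $c=\sigma^2$. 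The standard optimization of $\exp\!\big(\frac{s^2 v}{2(1-cs)}-s\epsilon\big)$ at $s=\epsilon/(v+c\epsilon)$, which always lies in $(0,\sigma^{-2})$ because $v>0$, then yields $\Pr\big(\langle\mathrm{x},\mathrm{y}\rangle\ge\epsilon\big)\le\exp\!\big(-\tfrac{\epsilon^2}{2(n\sigma^4+\sigma^2\epsilon)}\big)$, which is at least as strong as the claimed bound since $2(n\sigma^4+\sigma^2\epsilon)\le 4\sigma^2(n\sigma^2+\epsilon/2)$. Finally, $-\langle\mathrm{x},\mathrm{y}\rangle$ has the same law as $\langle\mathrm{x},\mathrm{y}\rangle$ (replace $\mathrm{x}$ by $-\mathrm{x}$), so the identical bound holds for $\Pr\big(\langle\mathrm{x},\mathrm{y}\rangle\le-\epsilon\big)$, and a union bound over the two events produces the factor $2$.

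The obstacle here is bookkeeping rather than ideas: one must keep the elementary log‑bound on its legitimate range $u=\sigma^4 s^2<1$, verify that the Chernoff‑optimal $s$ stays below $\sigma^{-2}$, and track constants so that the resulting exponent dominates $\epsilon^2/\big(4\sigma^2(n\sigma^2+\epsilon/2)\big)$ uniformly across the ``Gaussian'' regime $\epsilon\lesssim n\sigma^2$ and the ``exponential'' regime $\epsilon\gtrsim n\sigma^2$. Should the constants become awkward, an alternative for the middle step is the identity $\langle\mathrm{x},\mathrm{y}\rangle=\tfrac14\big(\|\mathrm{x}+\mathrm{y}\|_2^2-\|\mathrm{x}-\mathrm{y}\|_2^2\big)$, where $\mathrm{x}\pm\mathrm{y}$ are independent $\cN(\mathbf{0},2\sigma^2 I)$ vectors; this reduces the claim to a tail bound on the difference of two independent $\chi^2_n$ variables, which can be handled by Laurent--Massart‑type estimates together with the inclusion $\{A-B\ge t\}\subseteq\{A-n\ge t/2\}\cup\{n-B\ge t/2\}$.
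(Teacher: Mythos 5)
Your argument is correct, and there is nothing in the paper to compare it against: Proposition~\ref{prop:haupt_ip} is imported verbatim from the cited reference \cite{haupt:tit08sub} and the paper offers no proof of it, so your derivation fills a genuine gap rather than duplicating one. The steps all check out. The conditional MGF computation gives $\E[e^{s\langle\mathrm{x},\mathrm{y}\rangle}]=(1-\sigma^4s^2)^{-n/2}$ on $|s|<\sigma^{-2}$; the bound $-\log(1-u)\le u/(1-u)$ with $u=\sigma^4s^2$ and the discard of $(1+\sigma^2 s)^{-1}\le 1$ legitimately put you in the Bernstein form with $v=n\sigma^4$, $c=\sigma^2$; the choice $s=\epsilon/(v+c\epsilon)$ satisfies $\sigma^2 s<1$ (hence $u<1$) automatically, and plugging it in gives exactly $\exp\bigl(-\epsilon^2/(2(n\sigma^4+\sigma^2\epsilon))\bigr)$, which dominates the stated exponent because $2n\sigma^4+2\sigma^2\epsilon\le 4n\sigma^4+2\sigma^2\epsilon=4\sigma^2(n\sigma^2+\epsilon/2)$. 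The symmetry-plus-union-bound step for the factor $2$ is also fine. In fact you prove a strictly stronger inequality than the one stated; the looser constant in the proposition presumably reflects whatever route \cite{haupt:tit08sub} took. Your fallback via $\langle\mathrm{x},\mathrm{y}\rangle=\tfrac14(\|\mathrm{x}+\mathrm{y}\|_2^2-\|\mathrm{x}-\mathrm{y}\|_2^2)$ and two independent $\chi^2_n$ tails would also work but is unnecessary here.
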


Since we are mainly concerned with complex-valued random variables in this paper, it is helpful to state a complex version of the Azuma inequality. The following lemma is an easy consequence of Proposition~\ref{prop:az_ineq}.
\begin{lemma}[The Complex Azuma Inequality]\label{lem:az_ineq}
Let $(\Omega,\cF,\bbP)$ be a probability space and let $(M_0,M_1,\dots,M_n)$ be a bounded difference, complex-valued martingale sequence on $(\Omega,\cF,\bbP)$. That is, $\E[M_i] = M_{i-1} \in \C$ and further $|M_i - M_{i-1}| \leq b_i$ for every $i=1,\dots,n$. Then for every $\epsilon \geq 0$, we have
\begin{align}
	\Pr\left(|M_n - M_0| \geq \epsilon\right) \leq 4\exp\left(-\frac{\epsilon^2}{4\sum\limits_{i=1}^{n}b_i^2}\right).
\end{align}
\end{lemma}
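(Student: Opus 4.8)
The plan is to reduce the complex statement to the real-valued Azuma inequality (Proposition~\ref{prop:az_ineq}) by decomposing the martingale into its real and imaginary parts. Write $M_i = U_i + \sqrt{-1}\,V_i$ with $U_i \doteq \Re(M_i)$ and $V_i \doteq \Im(M_i)$. Since conditional expectation commutes with the (real-linear) operations of taking real and imaginary parts, the sequences $(U_0,\dots,U_n)$ and $(V_0,\dots,V_n)$ are each real-valued martingales on $(\Omega,\cF,\bbP)$. Moreover, $|U_i - U_{i-1}| \leq |M_i - M_{i-1}| \leq b_i$ and likewise $|V_i - V_{i-1}| \leq b_i$, so both of these real martingales inherit the same bounded-difference parameters $b_i$.

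Next I would split the deviation event. If $|U_n - U_0| < \epsilon/\sqrt{2}$ and $|V_n - V_0| < \epsilon/\sqrt{2}$ both hold, then $|M_n - M_0|^2 = |U_n-U_0|^2 + |V_n - V_0|^2 < \epsilon^2$; contrapositively, $\{|M_n - M_0| \geq \epsilon\} \subseteq \{|U_n - U_0| \geq \epsilon/\sqrt{2}\} \cup \{|V_n - V_0| \geq \epsilon/\sqrt{2}\}$. Applying the union bound and then Proposition~\ref{prop:az_ineq} to each of the two real martingales yields
\[
\Pr\left(|M_n - M_0| \geq \epsilon\right) \leq 2\cdot 2\exp\left(-\frac{(\epsilon/\sqrt{2})^2}{2\sum_{i=1}^{n}b_i^2}\right) = 4\exp\left(-\frac{\epsilon^2}{4\sum_{i=1}^{n}b_i^2}\right),
\]
which is exactly the claimed bound.

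There is essentially no serious obstacle here; the only points requiring a little care are (i) confirming that the real and imaginary parts of a complex martingale are themselves martingales with respect to the same filtration---which is immediate from linearity of conditional expectation---and (ii) keeping track of the two constant-factor losses, namely one factor of $2$ from the union bound over the real and imaginary parts and one factor of $2$ inside the exponent coming from the $1/\sqrt{2}$ split, which together account for the leading constant $4$ and the denominator $4\sum_i b_i^2$ in the complex bound in place of the $2$ and $2\sum_i b_i^2$ of the real version.
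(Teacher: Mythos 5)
Your proof is correct and follows essentially the same route as the paper's: split the complex martingale into real and imaginary parts, observe each is a real bounded-difference martingale with the same parameters $b_i$, and combine a union bound over the two $\epsilon/\sqrt{2}$ deviation events with the real Azuma inequality to obtain the constant $4$ and the denominator $4\sum_i b_i^2$. Nothing is missing.
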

\begin{proof}
To establish this lemma, first define $S_i \doteq \text{Re}(M_i)$ and $T_i \doteq \text{Im}(M_i)$. Further, notice that since $\E[M_i] = M_{i-1}$ and $|M_i - M_{i-1}| \leq b_i$, we equivalently have that: (i) $\E[S_i] = S_{i-1}$ and $|S_i - S_{i-1}| \leq b_i$, and (ii) $\E[T_i] = T_{i-1}$ and $|T_i - T_{i-1}| \leq b_i$. Therefore, we have that $(S_0,S_1,\dots,S_n)$ and $(T_0,T_1,\dots,T_n)$ are bounded difference, real-valued martingale sequences on $(\Omega,\cF,\bbP)$ and hence
\begin{align}
	\Pr\left(|M_n - M_0| \geq \epsilon\right) \stackrel{(a)}{\leq} \Pr\left(|S_n - S_0| \geq \frac{\epsilon}{\sqrt{2}}\right) + \Pr\left(|T_n - T_0| \geq \frac{\epsilon}{\sqrt{2}}\right) \stackrel{(b)}{\leq} 4\exp\left(-\frac{\epsilon^2}{4\sum\limits_{i=1}^{n}b_i^2}\right)
\end{align}
where $(a)$ follows from a simple union bounding argument and $(b)$ follows from the Azuma inequality.
\end{proof}
\begin{lemma}[$\ell_\infty$-Norm of the Projection of a Complex Gaussian Vector]\label{lem:gaussian}
Let $X$ be a real- or complex-valued $n \times p$ matrix having unit $\ell_2$-norm columns and let $\eta$ be a $p \times 1$ vector having entries independently distributed as $\cCN(0,\sigma^2)$. Then for any $\epsilon > 0$, we have
\begin{align}
    \Pr\left(\|X^\tH\eta\|_\infty \geq \sigma \epsilon\right) <
        \frac{4 p}{\sqrt{2\pi}} \cdot \frac{\exp(-\epsilon^2/2)}{\epsilon} \, .
\end{align}
\end{lemma}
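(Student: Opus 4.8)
The plan is to reduce this correlated, multivariate tail bound to a one–dimensional Gaussian tail estimate via a union bound, and then invoke the classical Mills–ratio inequality.

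First I would identify the marginal law of each coordinate of the signal proxy. The $i$-th entry of $X^\tH\eta$ is the linear functional $\langle\mathrm{x}_i,\eta\rangle$ of the circularly symmetric complex Gaussian vector $\eta\sim\cCN(\bzero,\sigma^2 I)$, and since $\mathrm{x}_i$ has unit $\ell_2$-norm, standard facts about complex Gaussians give $(X^\tH\eta)_i\sim\cCN(0,\sigma^2)$; equivalently, its real and imaginary parts are independent $\cN(0,\sigma^2/2)$ random variables. Crucially, the coordinates of $X^\tH\eta$ need not be independent when the columns of $X$ are not orthogonal, but the union bound does not require independence, so this causes no difficulty.

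Second I would split each complex-magnitude event into two real-valued events: since $|z|\le\sqrt2\,\max\{|\mathrm{Re}\,z|,|\mathrm{Im}\,z|\}$, the event $\{|(X^\tH\eta)_i|\ge\sigma\epsilon\}$ is contained in $\{|\mathrm{Re}(X^\tH\eta)_i|\ge\sigma\epsilon/\sqrt2\}\cup\{|\mathrm{Im}(X^\tH\eta)_i|\ge\sigma\epsilon/\sqrt2\}$. Taking a union bound over the $p$ coordinates and over the real/imaginary split, and observing that each such part divided by $\sigma/\sqrt2$ is standard normal, reduces the claim to controlling $\Pr(|\cN(0,1)|\ge\epsilon)=2\,\Pr(\cN(0,1)\ge\epsilon)$.

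Third I would apply the elementary tail bound $\Pr(\cN(0,1)\ge\epsilon)<\frac{1}{\sqrt{2\pi}}\,\frac{\mathrm{e}^{-\epsilon^2/2}}{\epsilon}$ for $\epsilon>0$, which follows by inserting the factor $t/\epsilon\ge1$ into $\int_\epsilon^\infty \mathrm{e}^{-t^2/2}\,dt$ and noting the inequality is strict on a set of positive measure. Collecting constants then yields
\begin{align*}
\Pr\big(\|X^\tH\eta\|_\infty\ge\sigma\epsilon\big)\;\le\;2p\cdot2\cdot\frac{1}{\sqrt{2\pi}}\,\frac{\mathrm{e}^{-\epsilon^2/2}}{\epsilon}\;=\;\frac{4p}{\sqrt{2\pi}}\cdot\frac{\mathrm{e}^{-\epsilon^2/2}}{\epsilon},
\end{align*}
which is the stated bound. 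There is essentially no genuine obstacle here; the only points needing care are (i) tracking the variance convention for $\cCN(0,\sigma^2)$ so that the exponent emerges as $\epsilon^2/2$ (and not $\epsilon^2$ or $\epsilon^2/4$) after the $\sqrt2$ factors from the real/imaginary split cancel, and (ii) verifying that the strict inequality in the Mills-ratio estimate propagates to the strict inequality claimed in the lemma.
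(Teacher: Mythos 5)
Your proposal is correct and follows essentially the same route as the paper's proof: a union bound over the $p$ coordinates, a further union bound over real and imaginary parts (each a scaled standard normal after noting the $\cN(0,\sigma^2/2)$ marginals), and the strict Mills-ratio bound $Q(\epsilon) < \frac{1}{\sqrt{2\pi}\,\epsilon}\exp(-\epsilon^2/2)$, giving the factor $4p$ exactly as in the paper. The points of care you flag (the $\cCN$ variance convention and the strictness of the final inequality) are precisely the ones the paper handles implicitly, so there is nothing missing.
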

\begin{proof}
Assume without loss of generality that $\sigma = 1$, since the general case follows from a simple rescaling argument. Let $\mathrm{x}_1, \dots, \mathrm{x}_p \in \C^n$ be the $p$ columns of $X$ and define
\begin{align}
    z_i \doteq \mathrm{x}_i^\tH \eta, \ i=1,\dots,p.
\end{align}
Note that the $z_i$'s are identically (but not independently) distributed as $z_i \sim \cCN(0,1)$, which follows from the fact that $\eta_i \stackrel{i.i.d.}{\sim} \cCN(0,1)$ and the columns of $X$ have unit $\ell_2$-norms. The rest of the proof is pretty elementary and follows from the facts that
\begin{align}
    \nonumber
    \Pr\left(\|X^\tH\eta\|_\infty \geq \epsilon\right) &\stackrel{(a)}{\leq} p \cdot \Pr\left(|\text{Re}(z_1)|^2 + |\text{Im}(z_1)|^2 \geq \epsilon^2\right)\\
    \nonumber
        &\stackrel{(b)}{\leq} 2p \cdot \Pr\left(|\text{Re}(z_1)| \geq \frac{\epsilon}{\sqrt{2}}\right) = 2p \cdot 2 Q(\epsilon)\\
        &\stackrel{(c)}{<} \frac{4p}{\sqrt{2\pi}} \cdot \frac{\exp(-\epsilon^2/2)}{\epsilon} \, .
\end{align}
Here, $(a)$ follows by taking a union bound over the event $\bigcup_i\{|z_i| \geq \epsilon\}$, $(b)$ follows from taking a union bound over the event $\{|\text{Re}(z_1)| \geq \epsilon/\sqrt{2}\} \cup \{|\text{Im}(z_1)| \geq \epsilon/\sqrt{2}\}$ and noting that the real and imaginary parts of $z_i$'s are identically distributed as $\cN(0,\frac{1}{2})$, and $(c)$ follows by upper bounding the \emph{complementary cumulative distribution function} as $Q(\epsilon) < \frac{1}{\sqrt{2\pi} \epsilon} \exp(-\frac{1}{2}\epsilon^2)$ \cite{kay:98b}.
\end{proof}


\end{document}